\numberwithin{equation}{section}
\newtheorem{Theorem}{Theorem}[section]
\newtheorem{Lemma}[Theorem]{Lemma}
\newtheorem{Definition}[Theorem]{Definition}
\newtheorem{Corollary}[Theorem]{Corollary}
\numberwithin{equation}{section}
 \def\p{\partial} \def\nb{\nonumber}
\def \Vh0{\stackrel{\circ}{V}_h} \def\to{\rightarrow}
\def\Om{\Omega}  
\def\I{ {\rm (I) } }  \def\II{ {\rm (II) } }
\newcommand{\q}{\quad}    
\def\l{\label}  \def\f{\frac}  
\def\m{\mbox}   
\def\ms{\medskip}  
\def\p{\partial}
\newcommand{\lc}
{\mathrel{\raise2pt\hbox{${\mathop<\limits_{\raise1pt\hbox
{\mbox{$\sim$}}}}$}}}
\newcommand{\gc}
{\mathrel{\raise2pt\hbox{${\mathop>\limits_{\raise1pt\hbox{\mbox{$\sim$}}}}$}}}
\newcommand{\ec}
{\mathrel{\raise2pt\hbox{${\mathop=\limits_{\raise1pt\hbox{\mbox{$\sim$}}}}$}}}
\def\bb{\begin{equation}} \def\ee{\end{equation}}
\def\beqn{\begin{eqnarray}}  \def\eqn{\end{eqnarray}}
\def\beqnx{\begin{eqnarray*}} \def\eqnx{\end{eqnarray*}}
\def\bn{\begin{enumerate}} \def\en{\end{enumerate}}
\def\bd{\begin{description}} \def\ed{\end{description}}
\title{The Concept of Heterogeneous Scattering Coefficients and Its Application in Inverse Medium Scattering}
\author{
Habib Ammari\footnote{Department of Mathematics and Applications, Ecole Normale Sup$\acute{\text{e}}$rieure, 45 Rue d'Ulm, 75005 Paris, France. The work of this author
was supported by ERC Advanced Grant Project MULTIMOD--267184.
(habib.ammari@ens.fr).}
\and Yat Tin Chow\footnote{Department of Mathematics, Chinese University of Hong Kong, Shatin, N.T., Hong Kong (ytchow@math.cuhk.edu.hk).}
\and Jun Zou\footnote{Department of Mathematics, Chinese University of Hong Kong, Shatin, N.T., Hong Kong.
The work of this author was substantially supported by Hong Kong RGC grants (projects 405513 and 404611).
(zou@math.cuhk.edu.hk).}
}
\begin{document}

\date{}
\maketitle

\begin{abstract}
This work investigates the scattering coefficients for inverse medium scattering problems. It shows some fundamental properties of the coefficients such as symmetry and tensorial properties.
The relationship between the scattering coefficients and the far-field pattern is also derived.
Furthermore,  the sensitivity of the scattering coefficients with respect to changes in the permittivity and permeability distributions is investigated. In the linearized case, explicit formulas for reconstructing permittivity and permeability distributions from the scattering coefficients is proposed.  They relate the exponentially ill-posed character of the inverse medium scattering problem at a fixed frequency to the exponential decay of the scattering coefficients. Moreover, they show the stability of the reconstruction from multifrequency measurements. This provides a new direction for solving inverse medium scattering problems.
\end{abstract}

\bigskip

\noindent {\footnotesize Mathematics Subject Classification
(MSC2000): 35R30, 35B30}

\noindent {\footnotesize Keywords: inverse medium scattering, scattering coefficients, heteregeneous inclusions, far-field measurements, sensitivity, reconstruction algorithm}

\section{Introduction}
In this work we will be concerned with the following transverse magnetic polarized wave scattering problem
\beqn
    \nabla \cdot \f{1}{\mu} \nabla u + \omega^2 \varepsilon u = 0  & \text{ in } \mathbb{R}^2 \, ,
\label{scattering1}
\eqn
where $\mu, \varepsilon > 0 $ are the respective permittivity and permeability coefficients of the medium.  We consider an inhomogeneous medium $\Omega$ contained inside a homogeneous background medium, and assume that $\Omega$ is an open bounded connected domain with a $\mathcal{C}^{1,\alpha}$  boundary for some $0< \alpha<1$. Let $\nu$ denote the outward normal vector at $\partial \Omega$, and $\mu_0, \varepsilon_0 > 0 $ be the medium coefficients of the homogeneous background medium.  Suppose that
$\mu, \varepsilon \in L^\infty$ and $\mu - \mu_0$ and $\varepsilon - \varepsilon_0$ are supported in $\Omega$. Moreover, there exist positive constants $\underline{\mu}$ and $\underline{\varepsilon}$ such that $\mu(x)\geq \underline{\mu}$ and $\varepsilon(x) \geq \underline{\varepsilon}$ in $\Omega$. Under these settings, we can write the equation (\ref{scattering1}) as follows:
\beqn
\begin{cases}
    \nabla \cdot \f{1}{\mu} \nabla u + \omega^2 \varepsilon u = 0 & \text{ in } \Omega \, , \\
    \Delta u + k_0^2 u = 0 & \text{ in } \mathbb{R}^2 \backslash \overline{\Omega} , \\
    u^+  = u^{-} & \text{ on } \partial \Omega \, , \\
    \f{1}{\mu_0} \f{\partial u^{+}}{\partial \nu} = \f{1}{\mu} \f{\partial u^{-}}{\partial \nu} & \text{ on } \partial \Omega \, . \\
\end{cases}
\label{scattering2}
\eqn
Here and throughout this paper, the superscripts $\pm$ indicate the limits from outside and inside of $\Omega$, respectively, and $\partial/\partial \nu$ denotes the normal derivative.
We shall complement the system (\ref{scattering2}) by the physical outgoing Sommerfeld radiation condition:
\beqn
    \f{\partial}{\partial r} (u - u_0) - i k_0 (u - u_0) = O(|x|^{-\f{3}{2}})& \text{ as } |x| \rightarrow \infty \, .
    \label{sommerfield}
\eqn
where  $k_0 = \omega \sqrt{\mu_0 \varepsilon_0} $ is the wavenumber and $u_0$ is an incident field, solving the homogeneous Helmholtz equation $ (\Delta + k_0^2) u_0 = 0$
in $\mathbb{R}^d$.
The solution $u$ to the system (\ref{scattering2}) and (\ref{sommerfield})
represents the total field due to the scattering from the inclusion $\Omega$ corresponding to the incident field $u_0$.

The notion of scattering coefficients was previously studied
for homogeneous electromagnetic inclusions \cite{homoscattering} (see also \cite{lim}) in order to enhance near-cloaking.
The purpose of this paper is twofold.  We first introduce the concept of inhomogeneous
scattering coefficients  and investigate some of their important properties and their
sensitivity with respect to  changes in the physical parameters. Then we make use of this new concept for solving the inverse medium scattering  problem and understanding the associated fundamental issues of stability and resolution. The inhomogeneous scattering coefficients can be obtained from the far-field data by a least-squares method \cite{tran}. 
Explicit reconstruction formulas of the inhomogeneous electromagnetic parameters from the scattering coefficients at a fixed frequency or at multiple frequencies are derived in the linearized case. These formulas show that the  exponentially ill-posed 
characteristics of the inverse medium scattering problem at a fixed frequency \cite{gallagher, isakov1, john} is due to the exponential decay of the scattering coefficients. Moreover, they clearly indicate the stability of the reconstruction from multifrequency measurements \cite{bao2,bao3, isakov2, isakov3}. Based on the decay property of the inhomogeneous scattering coefficients, a resolution analysis analogous to the one in \cite{foundations} can be easily derived. The resolving power, i.e., the number of scattering coefficients which can be stably reconstructed from the far-field measurements,  can be expressed in terms of the signal-to-noise ratio in the far-field measurements.  The scattering coefficient based approach introduced in this paper is a new promising direction for solving the long-standing inverse scattering problem with heterogeneous inclusions. It could be combined with the continuation method developed in \cite{bao1,coifman} for achieving a good resolution and stability for the image reconstruction.

For the sake of simplicity, we shall restrict ourselves to the scattering problem in two dimensions,
but all the results and analysis hold true also for three dimensions.

The paper is organized as follows. In section \ref{sec2} we introduce the notion of inhomogeneous scattering coefficients. Section \ref{sec3} provides integral representations of the scattering coefficients and shows their exponential decay. This property is  the root cause of the exponentially ill-posed character of the inverse medium scattering problem. In section \ref{sec4} we prove that the scattering coefficients are nothing else but the Fourier coefficients of the far-field pattern, then derive transformation formulas for the scattering coefficients under rigid transformations and scaling in section \ref{sec5}. In section \ref{sec6} we provide a sensitivity analysis with respect to the electromagnetic parameters for the scattering coefficients. In section \ref{sec7} we derive new reconstruction formulas from the scattering coefficients at one frequency and at multiple frequencies as well. A few concluding remarks are given in section \ref{sec8}. Appendix \ref{appendixA} is to construct a Neumann function for the inhomogeneous Helmholtz equation on a bounded domain. Appendices \ref{appendixB} and \ref{appendixC} are to show the existence of some functions  used in the derivation of the explicit reconstruction formulas in the linearized case.

\section{Integral representation and scattering coefficients} \label{sec2}
In this section we define the scattering coefficients of inhomogeneous inclusions. The idea of scattering coefficients for inclusions with homogeneous permittivity and permeability was initially introduced in \cite{homoscattering}. We extend this idea and define such a notion for inhomogeneous inclusions following the idea in \cite{nonhomopolarization,homoscattering}. We first derive the fundamental representation of the solution
$u$ to the system (\ref{scattering2})-(\ref{sommerfield}). For $k_0>0$, let $\Phi_{k_0}$ be the fundamental solution to the Helmholtz operator $\Delta + k_0^2$ in two dimensions satisfying
\[
    (\Delta + k_0^2)\Phi_{k_0}(x) = \delta_0(x)
\]
subject to the outgoing Sommerfeld radiation condition:
$$
    \f{\partial}{\partial r}\Phi_{k_0} - i k_0 \Phi_{k_0} = O(|x|^{-\f{3}{2}}) \quad \text{ as } |x| \rightarrow \infty \, .
$$
  Then $\Phi_{k_0}$ is given by
\beqn
    \Phi_{k_0} (x) = -\f{i}{4} H^{(1)}_0(k_0|x|) \, ,
    \label{fundamental}
\eqn
where $H^{(1)}_0$ is the Hankel function of the first kind of order zero.
We can easily deduce from Green's formula that if $u$ is the solution to (\ref{scattering2})-(\ref{sommerfield}), then we have for $x \in \mathbb{R}^2 \backslash \overline{\Omega}$ that
\beqn
    (u - u_0)(x) &=&
    \int_{\p \Omega}  \Phi_{k_0} (x-y) \f{\p (u- u_0)^{+}}{\p \nu}  (y) d \sigma(y) - \int_{\p \Omega}  \f{\p \Phi_{k_0} (x-y) }{\p \nu_y}  (u- u_0)^+ (y) d \sigma(y) \notag \\
    &=&
    \int_{\p \Omega}  \Phi_{k_0} (x-y) \f{\p u^{+}}{\p \nu}  (y) d \sigma(y) - \int_{\p \Omega}  \f{\p \Phi_{k_0} (x-y) }{\p \nu_y}  u^+ (y) d \sigma(y) \notag \\
    &=&
    \int_{\p \Omega}  \left(\f{\mu_0}{\mu}\right) \Phi_{k_0} (x-y)  \f{\p u^{-}}{\p \nu}  (y) d \sigma(y) - \int_{\p \Omega}  \f{\p \Phi_{k_0} (x-y) }{\p \nu_y}  u^- (y) d \sigma(y) \,, \label{scattered1}
\eqn
where the second equality holds since $u_0$ satisfies the homogeneous Helmholtz equation. Let
$g = \f{1}{\mu} \f{\p u^{-}}{\p \nu} $. Then we define the Neumann-to-Dirichlet (NtD) map
$\Lambda_{\mu,\varepsilon}$:
$H^{-\f{1}{2}}(\p \Omega) \to H^{\f{1}{2}}(\p \Omega)$ such that for any $g\in H^{-\f{1}{2}}(\p \Omega)$,
$u=\Lambda_{\mu,\varepsilon}g\in H^{\f{1}{2}}(\p \Omega)$ is the trace of the solution to the following system:
\beqn
    \nabla \cdot \f{1}{\mu} \nabla u + \omega^2 \varepsilon u = 0  \text{ in } \Omega \,; \quad
    \f{1}{\mu} \f{\partial u}{\partial \nu} = g  \text{ on } \partial \Omega \,.
    \label{interior_pde}
\eqn
We remark that $\Lambda_{\mu_0,\varepsilon_0}$ is well-defined if $\omega \sqrt{\mu_0 \varepsilon_0}$ is not a Neumann eigenvalue of $-\Delta$ on $\Omega$. For general distributions $\varepsilon$ and $\mu$, in order to ensure the well-posedeness of $\Lambda_{\mu,\varepsilon}$, one should assume, throughout this paper, that $0$ is not a Neumann eiganvalue of
$\nabla \cdot (1/\mu) \nabla + \omega^2 \varepsilon$ in $\Omega$.

 With this definition of $\Lambda_{\mu,\varepsilon}$,
we have $\Lambda_{\mu,\varepsilon} [g] = u^{-}$ and $\f{1}{\mu_0}\Lambda_{\mu_0,\varepsilon_0} [\f{\p\Phi_{k_0}}{\p \nu}] = \Phi_{k_0} $ on $\p \Om$.  We can therefore rewrite (\ref{scattered1}) as
\beqnx
    (u - u_0)(x) &=&
    \int_{\p \Omega}  \mu_0 \Phi_{k_0} (x-y) g(y) d \sigma(y) - \int_{\p \Omega}   \mu_0  \Lambda_{\mu_0,\varepsilon_0}^{-1} [\Phi_{k_0}](x-y)\Lambda_{\mu,\varepsilon} [g] (y) d \sigma(y) \,. \label{scattered2}
\eqnx
One can check that $\Lambda_{\mu,\varepsilon}$ is self-adjoint under the duality pair $\langle \cdot, \cdot \rangle_{H^{-\f{1}{2}},H^{\f{1}{2}}}$ on $\p \Omega$. So, we can further write
\beqn
    (u - u_0)(x) = \mu_0  \int_{\p \Omega} \Phi_{k_0} (x-y) \Lambda_{\mu_0,\varepsilon_0}^{-1} ( \Lambda_{\mu_0,\varepsilon_0} - \Lambda_{\mu,\varepsilon} ) [g] (y) d \sigma(y) \, , \quad x \in \mathbb{R}^d \backslash \overline{\Omega}.
    \label{ingegral1}
\eqn

We now use Graf's addition formula \cite{Watson} to derive an asymptotic
expression of $u - u_0$ as $|x| \rightarrow \infty$.
For the fundamental solution (\ref{fundamental}),
we recall the Graf's addition formula for $|x| > |y|$:
\beqn
    H^{(1)}_0 (k_0 |x-y|) = \sum_{n\in \mathbb{Z}} H^{(1)}_n (k_0 |x|) e^{i n \theta_x} J_n (k_0 |y|) e^{-i n \theta_y} \, ,
    \label{graf}
\eqn
where $x$ is in polar coordinate  $(|x|,\theta_x)$, and the same for $y$.
Now we define
\beqn
    (u_0)_m (y) := J_m (k_0 |y|) e^{i m \theta_y}\,, \label{incidence}
\eqn
and let $u_m$ to be the total field corresponding to the incident field $(u_0)_m$, namely
the solution to (\ref{scattering2})-(\ref{sommerfield}) with the incident field $u_0$ replaced
by $(u_0)_m$.  If we write
\beqn
    g_m := \f{1}{\mu} \f{\p u_m^{-}}{\p \nu} \,,
\eqn
then for any incident field $u_0$ admitting the expansion
\beqn
    u_0 (y) = \sum_{m\in \mathbb{Z}} a_m J_m (k_0 |y|) e^{i m \theta_y} \,,
\eqn
we have
\beqn
    g = \f{1}{\mu} \f{\,\,\,\p u^{-} }{\p \nu}  = \sum_{m\in \mathbb{Z}} a_m g_m \, .
    \label{gm}
\eqn
Putting (\ref{graf}) and (\ref{gm}) into (\ref{ingegral1}), we get the following asymptotic formula as $|x| \rightarrow \infty$:
\beqn
    & &(u - u_0)(x) \notag \\
    &=& -\f{i \mu_0}{4} \sum_{m,n\in \mathbb{Z}}  \int_{\p \Omega} a_m  H^{(1)}_n (k_0 |x|) J_n (k_0 |y|) e^{i n (\theta_x - \theta_y )} \Lambda_{\mu_0,\varepsilon_0}^{-1} ( \Lambda_{\mu_0,\varepsilon_0} - \Lambda_{\mu,\varepsilon} ) [g_m] (y) d \sigma(y) \,.
    \label{ingegral2}
\eqn
This motivates us to introduce the following definition.
\begin{Definition}
The scattering coefficients $\{W_{nm}\}_{m,n \in \mathbb{Z}}$ at frequency $\omega$ of the inhomogeneous scatterer
$\Omega$  with the permittivity and permeability distributions $\varepsilon, \mu$ are defined by
\beqn
    W_{nm} = W_{nm} [\varepsilon, \mu, \omega, \Omega] := \mu_0  \int_{\p \Omega} J_n (k_0 |y|) e^{-i n \theta_y} \Lambda_{\mu_0,\varepsilon_0}^{-1} ( \Lambda_{\mu_0,\varepsilon_0} - \Lambda_{\mu,\varepsilon} ) [g_m] (y) d \sigma(y). \label{eq:w_nm}
\eqn
\end{Definition}
With this definition and the derivations above, we immediately come to the following integral representation
theorem from (\ref{ingegral2}).
\begin{Theorem}
For an incident field of the form $u_0 (y) = \sum_{m\in \mathbb{Z}} a_m J_m (k_0 |y|) e^{i m \theta_y}$,
the total field $u$ (i.e., the solution of (\ref{scattering2})-(\ref{sommerfield})) has the following asymptotic representation:
\beqn
    (u - u_0)(x) = -\f{i}{4} \sum_{m,n} a_m  H^{(1)}_n (k_0 |x|) e^{i n \theta_x} W_{nm}\q
    \m{as}\q |x| \rightarrow \infty\,. \label{eq:ufromwnm}
\eqn
\end{Theorem}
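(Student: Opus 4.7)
The plan is to combine three ingredients already established in the excerpt: the exterior integral representation (\ref{ingegral1}) for $u-u_0$, Graf's addition formula (\ref{graf}) for the Hankel kernel, and the linear decomposition (\ref{gm}) of the interior Neumann data $g$ into the modal pieces $g_m$.

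First, I would fix $|x|$ large enough that $|x|>|y|$ holds uniformly for $y\in\partial\Omega$, and insert $\Phi_{k_0}(x-y)=-\frac{i}{4}H^{(1)}_0(k_0|x-y|)$ into (\ref{ingegral1}). Applying (\ref{graf}) separates the kernel into the $x$- and $y$-dependent factors $H^{(1)}_n(k_0|x|)e^{in\theta_x}$ and $J_n(k_0|y|)e^{-in\theta_y}$, respectively. Then, using (\ref{gm}) to write $g=\sum_m a_m g_m$ together with the linearity of $\Lambda_{\mu_0,\varepsilon_0}^{-1}(\Lambda_{\mu_0,\varepsilon_0}-\Lambda_{\mu,\varepsilon})$, I would pull the coefficients $a_m$ outside the boundary integral. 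After interchanging the $m$- and $n$-summations with the integration over $\partial\Omega$, the remaining inner integral is exactly $W_{nm}$ as defined in (\ref{eq:w_nm}), which directly produces (\ref{eq:ufromwnm}).

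The principal technical obstacle is to justify the exchange of the two infinite series with the surface integral. For the $n$-series, the large-order asymptotics of Bessel and Hankel functions give $|H^{(1)}_n(k_0|x|)J_n(k_0|y|)|=O((|y|/|x|)^{|n|})$ uniformly in $y\in\partial\Omega$, so the Graf expansion converges absolutely and uniformly whenever $|x|$ exceeds $\max_{y\in\partial\Omega}|y|$. For the $m$-series, I would appeal to continuity of $\Lambda_{\mu_0,\varepsilon_0}^{-1}(\Lambda_{\mu_0,\varepsilon_0}-\Lambda_{\mu,\varepsilon})$ between suitable Sobolev spaces on $\partial\Omega$, together with the rapid decay of the coefficients $a_m$ inherited from the entirety of $u_0$ as a solution of the homogeneous Helmholtz equation on all of $\mathbb{R}^2$; this ensures convergence of $\sum_m a_m g_m$ in the appropriate dual space. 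Once both series are uniformly controlled, Fubini legitimizes the interchange and the asymptotic representation (\ref{eq:ufromwnm}) follows.
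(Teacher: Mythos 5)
Your proposal is correct and takes essentially the same route as the paper: the paper also obtains (\ref{eq:ufromwnm}) by substituting Graf's formula (\ref{graf}) and the modal decomposition (\ref{gm}) into the representation (\ref{ingegral1}), arriving at (\ref{ingegral2}), from which the theorem is read off via the definition (\ref{eq:w_nm}). Your additional justification of the interchange of the $m$- and $n$-series with the boundary integral supplies a detail the paper leaves implicit, but it does not alter the argument.
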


\section{Representation and decay property of scattering coefficients}  \label{sec3}
In this section we would like to represent the scattering coefficients using layer potentials and
study their decay properties.  In order to do this, we first introduce the Neumann function of the Helmholtz equation and the single and double layer potentials.

Let $N_{\mu,\varepsilon} (x,y)$ be the fundamental solution to the problem (\ref{interior_pde}), i.e.,
for each fixed $z \in \Omega$, $N_{\mu,\varepsilon} (\cdot , z)$ is the solution to
\beqn
    \nabla \cdot \f{1}{\mu} \nabla N_{\mu,\varepsilon} (\cdot , z) + \omega^2 \varepsilon N_{\mu,\varepsilon} (\cdot , z) = - \delta_{z} (\cdot)   \text{ in } \Omega \, ; \quad
    \f{1}{\mu} \f{\partial}{\partial \nu} N_{\mu,\varepsilon} (\cdot , z) = 0  \text{ on } \partial \Omega\,.
    \label{Neumann}
\eqn
Let $\mathcal{N}_{\mu,\varepsilon} [g] (x) := \int_{\partial \Omega} N_{\mu,\varepsilon} (x , y) g(y) d \sigma(y)$ for $x \in \Omega$.
Then we can see that $\mathcal{N}_{\mu,\varepsilon} [g] (x)$ is the solution to (\ref{interior_pde}), and that
\beqn
    \Lambda_{\mu,\varepsilon} [g] (x) = \mathcal{N}_{\mu,\varepsilon} [g] (x) \, , \quad x \in \partial \Omega,
    \label{def_ntd}
\eqn
by noting the relation (cf.~\cite{book})
$$ \f{1}{\mu} \f{\partial}{\partial \nu} \mathcal{N}_{\mu,\varepsilon}[g]= g \quad \mbox{on } \partial \Omega\,.$$

Let $\mathcal{S}_{k_0} [\phi]$ and $\mathcal{D}_{k_0} [\phi]$ be the following
single and double layer potentials on $\p \Om$:
\beqn
    \mathcal{S}_{k_0} [\phi](x) = \int_{\partial \Omega} \Phi_{k_0}(x-y) \phi(y) d \sigma (y)\,,
    \quad x \in \mathbb{R}^2,
    \eqn
    and \beqn
    \mathcal{D}_{k_0} [\phi](x) =
     \int_{\partial \Omega} \f{\p \Phi_{k_0}}{\p \nu_y}(x-y) \phi(y) d \sigma (y), \quad x \in \mathbb{R}^2 \setminus \partial \Omega \,.
\eqn
Then the layer potentials $\mathcal{S}_{k_0}$ and $\mathcal{D}_{k_0}$ satisfy the following jump conditions:
\beqn
    \f{\p}{\p \nu} \left( \mathcal{S}_{k_0}[\phi] \right)^{\pm} = (\pm \f{1}{2} I + \mathcal{K}^*_{k_0, \Omega} )[\phi]\,,
    \quad
    \left( \mathcal{D}_{k_0}[\phi] \right)^{\pm} = (\mp \f{1}{2} I + \mathcal{K}_{k_0, \Omega} ) [\phi]\,,
    \label{jump_condition}
\eqn
where $\mathcal{K}_{k_0, \Omega}$ is the boundary integral operator defined by
\[
   \mathcal{K}_{k_0, \Omega} [\phi](x) = \int_{\partial \Omega} \f{\p \Phi_{k_0}}{\p \nu_y}(x-y) \phi(y) d \sigma (y)
\]
and $ \mathcal{K}^*_{k_0, \Omega}$ is the $L^2$ adjoint of $ \mathcal{K}_{k_0, \Omega}$ with $L^2$ being equipped with the real inner product. Note that $ \f{1}{2} I + \mathcal{K}^*_{k_0, \Omega}$ is invertible if $k_0^2$ is not a Dirichlet eigenvalue of $-\Delta$ on $\Omega$; see \cite{eigenpaper, book}. From (\ref{ingegral1}) and the transmission conditions (\ref{scattering2}), we can see that the solution $u$ to (\ref{scattering2})-(\ref{sommerfield}) can be represented as
\beqn
    u(x) =  u_0 (x) + \mu_0 \mathcal{S}_{k_0} [\phi] \text{ for } x \in \mathbb{R}^d \backslash \overline{\Omega} \, ; \quad
    u(x) =  \mathcal{N}_{\mu,\varepsilon} [\psi] \text{ for } x \in \Omega
    \label{total_u}
\eqn
for some density pair $(\phi, \psi) \in L^2(\partial \Omega) \times L^2(\partial \Omega)$ which satisfies
\[
        u_0  = \Lambda_{\mu,\varepsilon}[\psi] - \mu_0 \mathcal{S}_{k_0} [\phi] \quad \m{and} \q
        \f{1}{\mu_0} \f{\p u_0}{\p \nu}   = - ( \f{1}{2} I + \mathcal{K}^*_{k_0, \Omega} ) [\phi] + \psi \q
        \m{on} \q \partial \Omega\,.
\]
If we define
\beqn
    A := \begin{pmatrix} - \mu_0 \mathcal{S}_{k_0} & \Lambda_{\mu,\varepsilon} \\ - ( \f{1}{2} I + \mathcal{K}^*_{k_0, \Omega} ) & I  \end{pmatrix} \, , \label{operator_A}
\eqn
then we can write $(\phi,\psi)$ as the solution to the following equation
\beqn
    A \begin{pmatrix} \phi \\ \psi  \end{pmatrix} = \begin{pmatrix} u_0 \\ \f{1}{\mu_0} \f{\p u_0}{\p \nu}  \end{pmatrix} \, ,\label{potential}
\eqn
and show the following result.
\begin{Lemma}
The operator $A: L^2(\partial \Omega) \times L^2(\partial \Omega) \rightarrow L^2(\partial \Omega) \times L^2(\partial \Omega)$ is invertible.
\end{Lemma}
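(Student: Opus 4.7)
The plan is to show that $A$ is Fredholm of index zero and injective; the Fredholm alternative then yields invertibility. For the Fredholm step I would write $A = A_0 + K$, with $A_0$ obtained by replacing $\Lambda_{\mu,\varepsilon}$ in the $(1,2)$-entry of $A$ by the background map $\Lambda_{\mu_0,\varepsilon_0}$, so that $K$ contains only the smoothing remainder $\Lambda_{\mu,\varepsilon} - \Lambda_{\mu_0,\varepsilon_0}$ and is therefore compact. The invertibility of $A_0$ hinges on the identity
\beqnx
\mu_0 \mathcal{S}_{k_0} \;=\; \Lambda_{\mu_0,\varepsilon_0}\bigl(-\tfrac{1}{2} I + \mathcal{K}^*_{k_0,\Omega}\bigr),
\eqnx
obtained by applying $\Lambda_{\mu_0,\varepsilon_0}$ to the interior Neumann trace of $\mathcal{S}_{k_0}[\phi]$ (viewed as a solution of $(\Delta+k_0^2)v=0$ in $\Omega$) and equating with the corresponding Dirichlet trace. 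Eliminating $\psi$ from the second row of the system and substituting into the first, this identity telescopes the problem down to the scalar equation $\Lambda_{\mu_0,\varepsilon_0}[\phi + g] = f$, which is uniquely solvable under the standing assumption that $\omega\sqrt{\mu_0\varepsilon_0}$ is not a Neumann eigenvalue of $-\Delta$ on $\Omega$. Hence $A_0$ is invertible and $A$ is Fredholm of index zero.

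For injectivity I would suppose $A(\phi,\psi)^T = 0$ and set
\beqnx
u^e(x) := \mu_0 \mathcal{S}_{k_0}[\phi](x) \ \text{ for } x \in \mathbb{R}^2 \setminus \overline{\Omega}, \qquad u^i(x) := \mathcal{N}_{\mu,\varepsilon}[\psi](x) \ \text{ for } x \in \Omega,
\eqnx
gluing them into a piecewise function $u$ on $\mathbb{R}^2$. The first zero equation is precisely the Dirichlet matching $u^e|^+ = u^i|^-$ on $\p\Omega$; the second, together with the jump relation $\p_\nu \mathcal{S}_{k_0}[\phi]|^+ = (\tfrac{1}{2} I + \mathcal{K}^*_{k_0,\Omega})[\phi]$ and the defining property $\f{1}{\mu}\p_\nu\mathcal{N}_{\mu,\varepsilon}[\psi] = \psi$, is the conormal matching $\f{1}{\mu_0}\p_\nu u^e|^+ = \f{1}{\mu}\p_\nu u^i|^-$. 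Since $u^e$ satisfies (\ref{sommerfield}) by construction, $u$ is a solution of the homogeneous problem (\ref{scattering2})--(\ref{sommerfield}) with $u_0 \equiv 0$, which under the standing eigenvalue assumption admits only $u \equiv 0$. Then $u^i \equiv 0$ in $\Omega$ forces $\psi = \f{1}{\mu}\p_\nu u^i|^- = 0$; and since $\mathcal{S}_{k_0}[\phi]$ vanishes outside $\Omega$ and is continuous across $\p\Omega$, it solves the interior Dirichlet problem for $-\Delta - k_0^2$ with zero data, so (provided $k_0^2$ is not a Dirichlet eigenvalue of $-\Delta$ on $\Omega$) it vanishes inside as well, and the normal-derivative jump finally yields $\phi = 0$.

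The main obstacle is handling the Fredholm decomposition carefully on $L^2(\p\Omega)\times L^2(\p\Omega)$: because $\mathcal{S}_{k_0}$, $\mathcal{K}^*_{k_0,\Omega}$, and both NtD maps are already compact from $L^2$ to $L^2$ (each smoothing to a higher Sobolev order on $\p\Omega$), the Fredholm structure is not visible from the diagonal entries of $A$ alone. I would therefore perform the invertibility analysis for $A_0$ on the natural Sobolev scale, taking $(\phi,\psi)$ in $H^{-1/2}(\p\Omega) \times H^{-1/2}(\p\Omega)$ and the data in $H^{1/2}(\p\Omega) \times H^{-1/2}(\p\Omega)$, where $A_0$ is manifestly invertible and $K$ is manifestly compact, and then transfer back to $L^2\times L^2$ by elliptic regularity on $\p\Omega$: any $L^2$ element of $\ker A$ automatically lies in the finer Sobolev class by bootstrapping from the integral equation, and any $L^2\times L^2$ datum is admissible in the weaker scale. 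Once injectivity and Fredholmness of index zero are established, invertibility follows immediately.
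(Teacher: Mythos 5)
Your injectivity argument is essentially the paper's own: the same gluing of $u=\mathcal{N}_{\mu,\varepsilon}[\psi]$ inside and $u=\mu_0\mathcal{S}_{k_0}[\phi]$ outside, uniqueness for the radiating transmission problem, and the Dirichlet-eigenvalue step to force $\phi=0$ (you even supply the order of the final deductions more carefully than the paper does). Your Fredholm step, however, takes a genuinely different route. The paper compares $A$ with
$\begin{pmatrix} -\mu_0\mathcal{S}_{k_0} & \mu_0\mathcal{S}_{k_0} \\ -\frac{1}{2}I & I \end{pmatrix}$
and gets compactness of the difference from the Appendix \ref{appendixA} kernel estimates, i.e.\ from the fact that $N_{\mu,\varepsilon}(x,y)$ and $\frac{1}{\mu_0}\Phi_{k_0}(x-y)$ share the same logarithmic singularity, so that $\Lambda_{\mu,\varepsilon}-\mu_0\mathcal{S}_{k_0}$ is compact. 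You instead keep $\mathcal{K}^*_{k_0,\Omega}$ inside $A_0$, replace only $\Lambda_{\mu,\varepsilon}$ by $\Lambda_{\mu_0,\varepsilon_0}$, and invert $A_0$ \emph{exactly} via the identity $\mu_0\mathcal{S}_{k_0}=\Lambda_{\mu_0,\varepsilon_0}\bigl(-\tfrac{1}{2}I+\mathcal{K}^*_{k_0,\Omega}\bigr)$ — the operator form of the paper's relation $\frac{1}{\mu_0}\Lambda_{\mu_0,\varepsilon_0}[\partial\Phi_{k_0}/\partial\nu]=\Phi_{k_0}$ — which telescopes the system to $\Lambda_{\mu_0,\varepsilon_0}[\phi+g]=f$; this computation is correct and buys you an explicit inverse of $A_0$ while bypassing the singularity analysis of Appendix \ref{appendixA} entirely. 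Two caveats: unique solvability of $\Lambda_{\mu_0,\varepsilon_0}[\phi+g]=f$ requires invertibility of the NtD map, i.e.\ that $k_0^2$ is not a \emph{Dirichlet} eigenvalue of $-\Delta$ on $\Omega$ (the paper's other standing assumption), not the Neumann-eigenvalue hypothesis you cite; and the compactness of your remainder $\Lambda_{\mu,\varepsilon}-\Lambda_{\mu_0,\varepsilon_0}$ is asserted rather than proved — for merely $L^\infty$ coefficients it is not automatic and needs, e.g., that $\mu-\mu_0$ and $\varepsilon-\varepsilon_0$ vanish near $\partial\Omega$, so that the difference of the two Neumann solutions satisfies the constant-coefficient Helmholtz equation near the boundary and gains regularity there.

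The one genuine gap is your transfer back to $L^2(\partial\Omega)\times L^2(\partial\Omega)$. Injectivity transfers by bootstrapping, as you say, but surjectivity cannot: solving the telescoped equation gives $\phi=\Lambda_{\mu_0,\varepsilon_0}^{-1}[f]-g$, and for $f\in L^2(\partial\Omega)\setminus H^1(\partial\Omega)$ this $\phi$ fails to lie in $L^2(\partial\Omega)$, since $\Lambda_{\mu_0,\varepsilon_0}^{-1}$ loses a derivative. Indeed the first row of $A$ maps $L^2\times L^2$ into $H^1(\partial\Omega)$, so $A$ is not surjective onto $L^2\times L^2$ and no regularity argument can repair this. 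In fairness, the paper's own proof commits the same sin — its comparison operator also smooths in the first row and is not invertible on $L^2\times L^2$ as claimed — and the lemma should be read with target space $H^1(\partial\Omega)\times L^2(\partial\Omega)$ (or formulated on the scale $H^{-1/2}\times H^{-1/2}\to H^{1/2}\times H^{-1/2}$ that you propose), in which corrected setting both your argument and the paper's go through.
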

\begin{proof}
Let $(\phi,\psi) \in  L^2(\partial \Omega) \times L^2(\partial \Omega)$ be such that $ A \begin{pmatrix} \phi \\ \psi  \end{pmatrix} = 0$. Let $u$ be defined by
$$
u = \left\{\begin{array}{l}  \mathcal{N}_{\mu,\varepsilon} [\psi] \quad  \text{ in }  \Omega,\\
\mu_0 \mathcal{S}_{k_0} [\phi] \quad \text{ in } \mathbb{R}^d \backslash \overline{\Omega}.
\end{array}
\right.
$$
From the jump conditions
$$
\left\{
\begin{array}{l}
\mu_0 \mathcal{S}_{k_0} [\phi] = \mathcal{N}_{\mu,\varepsilon} [\psi]  \quad \text{ on } \partial \Omega, \\

\mu_0 ( \f{1}{2} I + \mathcal{K}^*_{k_0, \Omega} ) = \frac{\partial}{\partial \nu} \mathcal{N}_{\mu,\varepsilon} [\psi] = \mu_0 \psi    \quad \text{ on } \partial \Omega,
\end{array}
\right.
$$
one can see that $u$ satisfies the Helmholtz equation
(\ref{scattering1})
together with the outgoing Sommerfeld radiation condition:
\beqn
  \f{\partial}{\partial r} u  - i k_0 u  = O(|x|^{-\f{3}{2}})& \text{ as } |x| \rightarrow \infty \, .
    \label{sommerfieldo}
\eqn
Uniqueness of a solution to (\ref{scattering1}) subject to the Sommerfeld radiation condition (\ref{sommerfieldo}) shows that $u=0$ in $\mathbb{R}^d$. Then, since $k_0^2$ is not a Dirichlet eigenvalue of $-\Delta$ on $\Omega$, we have $\phi=0$, hence
$\psi=0$ as well. This shows the injectivity of $A$.

Next, since  $\f{1}{\mu_0} \Phi_{k_0}(|x-y|)$ and $N_{\mu,\varepsilon}(x,y)$ have the same singularity type (i.e., of logarithmic type) as $|x-y| \rightarrow 0$ \cite{singular} (see Appendix \ref{appendixA}) and $\mathcal{K}^*_{k_0, \Omega}$ is a compact operator on $L^2(\partial \Omega)$, it follows that $A$ is a compact perturbation of the invertible operator on $ L^2(\partial \Omega) \times L^2(\partial \Omega)$ which is given by
$$
\begin{pmatrix} - \mu_0 \mathcal{S}_{k_0} & \mu_0 \mathcal{S}_{k_0} \\ - \frac{1}{2} I & I  \end{pmatrix} \,.
$$
Therefore, Fredholm alternative holds and injectivity of $A$ shows its invertibility.
 \end{proof}

We define $(\phi_m, \psi_m)$ as the pair of solution to the above equation (\ref{potential}) corresponding to
the incident field $u_0 (y) = (u_0)_m (y) := J_m (k_0 |y|) e^{i m \theta_y} $ defined as in (\ref{incidence}), then $W_{nm}$ can be simply
expressed as
\beqn
    W_{nm} = \mu_0 \int_{\partial \Omega} J_n (k_0 |y|) e^{ - i n \theta_y} \phi_m (y) d \sigma(y) = \mu_0 \langle (u_0)_n, \phi_m \rangle_{L^2(\partial \Omega)}\, .
    \label{inner}
\eqn
Using this expression, we can derive the decay property of scattering coefficients.
Again from the fact  that the functions $\f{1}{\mu_0} \Phi_{k_0}(|x-y|)$ and $N_{\mu,\varepsilon}(x,y)$ have the same  logarithmic type singularity as $|x-y| \rightarrow 0$ \cite{singular},
we obtain from (\ref{potential}) that
\beqn
    ||\phi_m||_{L^2(\partial \Omega)} + ||\psi_m||_{L^2(\partial \Omega)} \leq C  ( ||(u_0)_m ||_{L^2(\partial \Omega)} + ||\frac{\partial}{\partial \nu} (u_0)_m ||_{L^2(\partial \Omega)} ).
\eqn
Using the asymptotic behavior of the Bessel function $J_m$  \cite{handbook},
\beqn
     J_m (t) \bigg/ \f{1}{\sqrt{2 \pi |m|}}\left(\f{e t}{2 |m|}\right)^{|m|} \rightarrow 1
     \label{decayhaha}
\eqn
as $m \rightarrow \infty$, we have
\beqnx
||(u_0)_n ||_{L^2(\partial \Omega)} \leq \f{C_1^{|n|}}{|n|^{|n|}} \q \m{and} \q
||\phi_m ||_{L^2(\partial \Omega)} \leq   \f{C_2^{|m|}}{|m|^{|m|}}
\eqnx
for some constants $C_1$ and $C_2$. Therefore, we deduce from (\ref{inner}) that
\[
    |W_{nm}| = | \mu_0 \langle (u_0)_m, \phi_m \rangle_{L^2(\partial \Omega)} |  \leq  ||(u_0)_n ||_{L^2(\partial \Omega)} ||\phi_m ||_{L^2(\partial \Omega)} \leq \f{C^{|m|+|n|}}{|m|^{|m|} |n|^{|n|}}
\]
for some constant $C$, leading to the following theorem.
\begin{Theorem}
There exists a constant $C$ depending on $(\mu,\varepsilon,\omega)$ such that
\beqn
    |W_{nm}| \leq \f{C^{|m|+|n|}}{|m|^{|m|} |n|^{|n|}} \quad \text{ for all } n, m \in \mathbb{Z} \, .
    \label{decay_conclusion}
\eqn
\end{Theorem}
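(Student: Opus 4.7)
My plan follows the argument already sketched in the paragraphs just before the theorem: combine the layer-potential representation of $W_{nm}$ with Cauchy--Schwarz and the large-order Bessel asymptotic. The starting point is the identity
\[
W_{nm} = \mu_0 \langle (u_0)_n, \phi_m \rangle_{L^2(\partial\Omega)}
\]
from (\ref{inner}), which reduces the estimate to bounding the two factors $\|(u_0)_n\|_{L^2(\partial\Omega)}$ and $\|\phi_m\|_{L^2(\partial\Omega)}$ separately by quantities of the form $C^{|\cdot|}/|\cdot|^{|\cdot|}$.

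For the first factor, since $\partial\Omega$ is compact there is an $R>0$ with $|y|\leq R$ on $\partial\Omega$; then (\ref{decayhaha}) supplies the pointwise bound $|J_n(k_0|y|)|\leq C(ek_0 R/(2|n|))^{|n|}/\sqrt{2\pi|n|}$, and integrating over the finite-length boundary yields $\|(u_0)_n\|_{L^2(\partial\Omega)}\leq C_1^{|n|}/|n|^{|n|}$ for some constant $C_1$ depending on $R$ and $k_0$ but not on $n$. For the second factor, I would apply the bounded inverse $A^{-1}$ supplied by the preceding Lemma to (\ref{potential}), giving
\[
\|\phi_m\|_{L^2(\partial\Omega)}+\|\psi_m\|_{L^2(\partial\Omega)} \leq \|A^{-1}\|\bigl(\|(u_0)_m\|_{L^2(\partial\Omega)}+\tfrac{1}{\mu_0}\|\partial_\nu (u_0)_m\|_{L^2(\partial\Omega)}\bigr),
\]
and estimate each term on the right by the Bessel asymptotic. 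Multiplying the resulting bound $\|\phi_m\|_{L^2(\partial\Omega)}\leq C_2^{|m|}/|m|^{|m|}$ with the one for $(u_0)_n$ and absorbing $\mu_0$ and $\|A^{-1}\|$ into a single constant $C$ delivers exactly (\ref{decay_conclusion}).

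The step I would expect to be most delicate is the bound on the normal derivative $\|\partial_\nu (u_0)_m\|_{L^2(\partial\Omega)}$, which requires controlling derivatives of Bessel functions rather than the functions themselves. I would handle this via the recurrence $J_m'(t)=(J_{m-1}(t)-J_{m+1}(t))/2$, which reduces the derivative estimate to (\ref{decayhaha}) applied at the shifted indices $m\pm 1$, up to a harmless extra factor of $k_0$ absorbed into the constant. Apart from this small book-keeping, and the observation that $\|A^{-1}\|$ is a single finite constant depending only on $(\Omega,\mu,\varepsilon,\omega)$ and in particular not on $m$ or $n$, the proof is a direct assembly of Cauchy--Schwarz, the invertibility of $A$, and the Bessel asymptotic, all of which are already in place in the excerpt.
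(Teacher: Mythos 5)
Your proposal is correct and follows essentially the same route as the paper: the representation $W_{nm} = \mu_0 \langle (u_0)_n, \phi_m \rangle_{L^2(\partial\Omega)}$ from (\ref{inner}), the invertibility of $A$ to bound $\|\phi_m\|_{L^2(\partial\Omega)}$ via (\ref{potential}), the Bessel asymptotic (\ref{decayhaha}), and Cauchy--Schwarz. You even supply one detail the paper leaves implicit (the normal-derivative bound via $J_m'(t)=\tfrac{1}{2}(J_{m-1}(t)-J_{m+1}(t))$); just note that $\partial_\nu (u_0)_m$ also carries the angular contribution $\tfrac{im}{|y|}J_m(k_0|y|)e^{im\theta_y}$, whose extra factor $|m|$ is harmlessly absorbed into $C^{|m|}$.
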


\section{Far-field pattern}  \label{sec4}
In this section we shall derive the far-field pattern of the scattered field in terms of the scattering coefficients.

We consider the incident field $u_0$ as a plane wave of the form $u_0 = e^{i k_0 \xi \cdot x}$ with $\xi$ being on the unit circle.
We recall the Fourier mode $(u_0)_m (y) := J_m (k_0 |y|) e^{i m \theta_y}$ in (\ref{incidence}),
and the solution pair $(\phi_m, \psi_m)$ to (\ref{potential}) corresponding to the incident field $(u_0)_m$.  Then by the well-known Jacobi-Anger decomposition, we have the following decomposition of the plane wave in terms of $(u_0)_m$:
\beqn
    u_0 = e^{i k_0 \xi \cdot x} = \sum_{m\in \mathbb{Z}} e^{i m (\f{\pi}{2} - \theta_\xi)} J_m (k_0 |x|) e^{i m \theta_x} = \sum_{m\in \mathbb{Z}} e^{i m (\f{\pi}{2} - \theta_\xi)} (u_0)_m \, , \l{eq:u0}
\eqn
where $\xi=(\cos \theta_\xi, \sin\theta_\xi)$ and $x=|x|(\cos \theta_x, \sin \theta_x)$.

Let $(\phi, \psi)$ be the solution pair to (\ref{potential}) corresponding to the incident field $u_0 = e^{i k_0 \xi \cdot x}$,
then using (\ref{eq:u0}) and the principle of superposition we have
\beqn
    \phi = \sum_{m\in \mathbb{Z}} e^{i m (\f{\pi}{2} - \theta_\xi)} \phi_m \quad \text{ and } \quad \psi = \sum_{m\in \mathbb{Z}} e^{i m (\f{\pi}{2} - \theta_\xi)} \psi_m \, .
\eqn
It follows directly from (\ref{total_u}) that
\beqn
    u - e^{i k_0 \xi \cdot x} = \mu_0  \int_{\p \Omega} \Phi_{k_0} (x-y) \phi(y) d \sigma(y) = \mu_0 \sum_{m\in \mathbb{Z}} e^{i m (\f{\pi}{2} - \theta_\xi)}  \int_{\p \Omega} \Phi_{k_0} (x-y) \phi_m(y) d \sigma(y).
    \label{representation_plane}
\eqn
In order to derive the far-field pattern from expression (\ref{representation_plane}), we consider
the asymptotic expansion of $\Phi_{k_0} (x-y)$ as $|x| \rightarrow \infty$.
Noting the expression (\ref{fundamental}) of
$\Phi_{k_0}$ and the two approximations that
\beqn
    H^{(1)}_0 (t) = \sqrt{\f{1}{\pi t}} \left( e^{i (t - \f{\pi}{4})} + O(t^{-1}) \right) = \sqrt{\f{1}{\pi t}} e^{i (t - \f{\pi}{4})} + O(t^{-\f{3}{2}})
    \q \m{as} \q t \rightarrow \infty
\eqn
and $|x-y| = |x| - |y| \cos(\theta_x - \theta_y) + O(|x|^{-1})$ as $|x| \rightarrow \infty$,
we arrive at the following asymptotic expansion of $\Phi_{k_0} (x-y)$:
\beqn
    \Phi_{k_0} (x-y) =  e^{- i \f{\pi}{4}} \sqrt{\f{2}{\pi k_0 |x|}} e^{i k_0 \left(|x| - |y| \cos(\theta_x - \theta_y)\right) } + O(|x|^{-\f{3}{2}})
    \q \m{as} \q |x| \rightarrow \infty\,.
    \label{fundamental2}
\eqn
Substituting this into (\ref{representation_plane}) yields
\beqn
    u - e^{i k_0 \xi \cdot x} = -i e^{- i \f{\pi}{4}} \f{\mu_0 e^{i k_0 |x|} }{\sqrt{8 \pi k_0 |x|}}
    \sum_{m\in \mathbb{Z}} e^{i m (\f{\pi}{2} - \theta_\xi)}  \int_{\p \Omega}   e^{- i k_0 |y| \cos(\theta_x - \theta_y)} \phi_m(y) d \sigma (y) + O(|x|^{-\f{3}{2}}) \,,
    \label{farfield1}
\eqn
from which and the Jacobi-Anger identity
\beqn
    e^{- i k_0 |y| \cos(\theta_x - \theta_y)} = \sum_n  J_n( k_0 |y|) e^{- i n (\theta_y + \f{\pi}{2})} e^{i n \theta_x}\,
    \label{representation_plane2}
\eqn
it follows that
\[
    u (x)- e^{i k_0  \xi \cdot x} = -i e^{- i \f{\pi}{4}} \f{\mu_0 e^{i k_0 |x|} }{\sqrt{8 \pi k_0 |x|}}
    \sum_{m,n \in \mathbb{Z}}  i^{(m-n)} e^{- i m  \theta_\xi } e^{i n \theta_x} \int_{\p \Omega}  J_n( k_0 |y|) e^{- i n \theta_y }  \phi_m(y) d \sigma (y) + O(|x|^{-\f{3}{2}}) \, .
\]
Comparing this expression with the representation of $W_{nm}$ in (\ref{inner}), we infer that
\beqn
    u(x) - e^{i k_0 \xi \cdot x} = -i e^{- i \f{\pi}{4}} \f{\mu_0 e^{i k_0 |x|} }{\sqrt{8 \pi k_0 |x|}}
    \sum_{m,n \in \mathbb{Z}}  i^{(m-n)} e^{- i m  \theta_\xi } e^{i n \theta_x} W_{n m} + O(|x|^{-\f{3}{2}})  \, .
    \label{farfield_exp}
\eqn
This motivates us with the following definition of the far-field pattern.
\begin{Definition}
Consider the total field $u$ satisfying (\ref{scattering2})-(\ref{sommerfield}) with
the incident field
$u_0(x) = e^{i k_0 \xi \cdot x}$. Then
the far-field pattern $A_{\infty} [\varepsilon, \mu, \omega] (\theta_\xi, \theta_x)$ is defined by
\beqn \label{defA}
    u(x) - e^{i k_0 \xi \cdot x} = -i e^{- i \f{\pi}{4}} \f{\mu_0 e^{i k_0 |x|} }{\sqrt{8 \pi k_0 |x|}} A_{\infty} [\varepsilon, \mu, \omega] (\theta_\xi, \theta_x) + O(|x|^{-\f{3}{2}}) \q \m{as} \q |x| \rightarrow \infty\,.
\eqn
\end{Definition}

By comparing (\ref{defA}) with (\ref{farfield_exp}) we come to the following theorem.
\begin{Theorem}
Let $\theta_\xi$ and $\theta_x$ be respectively the incident and the scattered direction. Then
the far-field pattern $A_{\infty} [\varepsilon, \mu, \omega] (\theta_\xi, \theta_x)$ defined by
(\ref{defA}) can be expressed in the explicit form:
\beqn
    A_{\infty} [\varepsilon, \mu, \omega] (\theta_\xi, \theta_x) = \sum_{m,n \in \mathbb{Z}}   i^{(m-n)} e^{- i m  \theta_\xi } e^{i n \theta_x} W_{n m} [\varepsilon, \mu, \omega].
    \label{farfield_def}
\eqn
\end{Theorem}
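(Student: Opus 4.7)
The plan is to establish the identity by simply matching coefficients in two asymptotic expressions for $u(x) - e^{ik_0\xi\cdot x}$: the one taken as the definition of $A_\infty$ in (\ref{defA}), and the one derived through (\ref{farfield_exp}). In fact, almost all the work has already been laid out in the text preceding the statement; what remains is to organize those steps into a clean argument and to verify that the manipulations with infinite series are legitimate.

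First I would revisit the chain of identities that leads from (\ref{representation_plane}) to (\ref{farfield_exp}). Starting from the Jacobi--Anger decomposition (\ref{eq:u0}) of the plane wave $u_0 = e^{ik_0\xi\cdot x}$ into the Fourier modes $(u_0)_m$, linearity of the system (\ref{potential}) and invertibility of $A$ from the preceding lemma give the density decomposition $\phi = \sum_{m\in\mathbb Z} e^{im(\pi/2-\theta_\xi)}\phi_m$. Substituting into the single-layer representation from (\ref{total_u}) produces (\ref{representation_plane}). I would then insert the far-field asymptotic (\ref{fundamental2}) of $\Phi_{k_0}(x-y)$ to obtain (\ref{farfield1}), expand $e^{-ik_0|y|\cos(\theta_x-\theta_y)}$ by the Jacobi--Anger identity (\ref{representation_plane2}), and combine the exponential factors $e^{im(\pi/2-\theta_\xi)}$ and $e^{-in(\pi/2+\theta_y)}$ to produce the prefactor $i^{m-n}e^{-im\theta_\xi}$. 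The remaining boundary integral $\mu_0\int_{\partial\Omega}J_n(k_0|y|)e^{-in\theta_y}\phi_m(y)\,d\sigma(y)$ is precisely $W_{nm}$ by (\ref{inner}). Matching the resulting expression with (\ref{defA}) forces the closed form (\ref{farfield_def}).

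The main technical obstacle is justifying the two rearrangements of infinite sums: interchanging $\sum_m$ with the integral over $\partial\Omega$ and the $|x|\to\infty$ limit, and interchanging $\sum_n$ (coming from the second Jacobi--Anger identity) with the same integral. Both are handled by absolute convergence: the Bessel-function asymptotics (\ref{decayhaha}) give super-exponential decay of $J_m(k_0|y|)$ and $J_n(k_0|y|)$ in $|m|,|n|$ uniformly for $y\in\partial\Omega$, while the estimate (\ref{decay_conclusion}) on $W_{nm}$ (equivalently, the $L^2$ bound on $\phi_m$) controls the magnitudes of the integrals. Fubini and dominated convergence then legitimate each exchange, and a uniform-in-$m,n$ version of the $O(|x|^{-3/2})$ remainder in (\ref{fundamental2}) (obtained by absorbing the same Bessel decay into the error) ensures that the error term in (\ref{farfield_exp}) survives the summation, yielding the asymptotic with a single $O(|x|^{-3/2})$ tail as required by (\ref{defA}). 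Once this convergence bookkeeping is in place, identifying coefficients of $e^{-im\theta_\xi}e^{in\theta_x}$ in the two representations of $A_\infty$ completes the proof.
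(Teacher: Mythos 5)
Your proposal follows exactly the paper's own route: the chain (\ref{eq:u0}) $\to$ (\ref{representation_plane}) $\to$ (\ref{fundamental2}) $\to$ (\ref{farfield1}) $\to$ (\ref{representation_plane2}) $\to$ (\ref{farfield_exp}), identification of the boundary integral with $W_{nm}$ via (\ref{inner}), and comparison with the definition (\ref{defA}). Your added bookkeeping on interchanging sums, integrals, and the $|x|\to\infty$ limit via the decay estimates (\ref{decayhaha}) and (\ref{decay_conclusion}) is a correct and welcome tightening of a point the paper only remarks on after the theorem (uniform convergence of the series), but it does not constitute a different proof.
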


It is easy to see that the bounds in (\ref{decay_conclusion}) ensure the converges of the above series
uniformly with respect to $\theta_\xi$ and $\theta_x$, so $A_{\infty} [\varepsilon, \mu, \omega]$ is well-defined. Moreover, one can see that reconstructing the scattering coefficients from the far-field pattern is an exponentially ill-posed  problem if the measurements of $ A_{\infty}$ are corrupted with noise.

\section{Transformation rules and properties of scattering coefficients}  \label{sec5}
In this section, we derive more properties, including some transformation rules for the scattering coefficients.
To do so, we first represent the scattering coefficients in terms of an exterior NtD map.
For any $g \in H^{-\f{1}{2}}(\p\Om)$, the action of the exterior NtD map $\Lambda_{\mu_0,\varepsilon_0}^{e}:
H^{-\f{1}{2}} (\p\Om)\to H^{\f{1}{2}} (\p\Om)$ is defined by the trace $u=\Lambda_{\mu_0,\varepsilon_0}^{e}g
\in H^{\f{1}{2}} (\p\Om)$ of the solution $u$ to the system:
\beqn
    \begin{cases}
        \f{1}{\mu_0} \Delta u + \varepsilon_0 \omega^2 u = 0 & \text{ in } \mathbb{R}^d \backslash \overline{\Omega} \, , \\
        \f{1}{\mu_0} \f{\p u}{\p \nu } = g & \text{ on }\partial \Omega \, ,\\
        \f{\partial}{\partial r} u - i k_0 u = O(|x|^{-\f{3}{2}})& \text{ as } |x| \rightarrow \infty \,.
    \end{cases}
    \label{exterior_pde}
\eqn
With the help of the exterior NtD map $\Lambda_{\mu_0,\varepsilon_0}^{e}$,
we can derive some new representation of the scattering coefficients.

\begin{Lemma}
Let $(u_0)_n$ and the scattering coefficients $W_{nm}$ be defined as in (\ref{incidence}) and
(\ref{eq:w_nm}), respectively,
and let $\Lambda_{\mu,\varepsilon}$ and $\Lambda_{\mu_0,\varepsilon_0}^{e}$  be
the interior and exterior NtD maps. Then the scattering coefficients $W_{nm}$ can be expressed as
\beqn
    W_{nm} = \langle (u_0)_n, {\cal A}_{\mu,\varepsilon}(u_0)_m \rangle_{L^2(\partial \Omega)} \quad \text{ for all } n, m \in \mathbb{Z} \, ,
    \label{wmn_bilinear}
\eqn
where the operator ${\cal A}_{\mu,\varepsilon}$ is given by
\beqn
{\cal A}_{\mu,\varepsilon} := \mu_0 \Lambda_{\mu_0,\varepsilon_0}^{-1}
\left(\Lambda_{\mu_0,\varepsilon_0} - \Lambda_{\mu,\varepsilon}\right)
\left(\Lambda_{\mu,\varepsilon} - \Lambda^{e}_{\mu_0,\varepsilon_0}\right)^{-1}
\left(\Lambda_{\mu_0,\varepsilon_0} - \Lambda^{e}_{\mu_0,\varepsilon_0}\right)
\Lambda_{\mu_0,\varepsilon_0}^{-1} .
\label{long_operator}
\eqn
\end{Lemma}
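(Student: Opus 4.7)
The plan is to start from the existing definition of $W_{nm}$ in (\ref{eq:w_nm}), written compactly as
\[
   W_{nm} \;=\; \mu_0 \,\bigl\langle (u_0)_n,\ \Lambda_{\mu_0,\varepsilon_0}^{-1}(\Lambda_{\mu_0,\varepsilon_0}-\Lambda_{\mu,\varepsilon})[g_m]\bigr\rangle_{L^2(\partial\Omega)},
\]
which already supplies the outer factor $\mu_0\Lambda_{\mu_0,\varepsilon_0}^{-1}(\Lambda_{\mu_0,\varepsilon_0}-\Lambda_{\mu,\varepsilon})$ of $\mathcal{A}_{\mu,\varepsilon}$. It therefore suffices to identify the Neumann datum $g_m = \frac{1}{\mu}\partial_\nu u_m^-$ as $(\Lambda_{\mu,\varepsilon}-\Lambda^{e}_{\mu_0,\varepsilon_0})^{-1}(\Lambda_{\mu_0,\varepsilon_0}-\Lambda^{e}_{\mu_0,\varepsilon_0})\Lambda_{\mu_0,\varepsilon_0}^{-1}[(u_0)_m]$ on $\partial\Omega$, i.e.\ to express $g_m$ purely in terms of the incident trace $(u_0)_m$ through the interior and exterior NtD maps.

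To produce this identity I would use both transmission conditions in (\ref{scattering2}). The interior map gives $u_m^- = \Lambda_{\mu,\varepsilon}[g_m]$. For the exterior, the scattered field $u_m^s := u_m - (u_0)_m$ solves the exterior Helmholtz problem (\ref{exterior_pde}) with radiation, hence
\[
   u_m^{+} - (u_0)_m \;=\; \Lambda^{e}_{\mu_0,\varepsilon_0}\!\left[\tfrac{1}{\mu_0}\partial_\nu u_m^{+} - \tfrac{1}{\mu_0}\partial_\nu (u_0)_m\right]\q\m{on } \partial\Omega.
\]
Using $u_m^+=u_m^-$ and $\frac{1}{\mu_0}\partial_\nu u_m^+ = g_m$ this becomes $\Lambda_{\mu,\varepsilon}[g_m] - (u_0)_m = \Lambda^{e}_{\mu_0,\varepsilon_0}[g_m - h_m]$, where $h_m := \frac{1}{\mu_0}\partial_\nu(u_0)_m$. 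Since $(u_0)_m$ already satisfies the homogeneous background Helmholtz equation inside $\Omega$, it is the solution to the interior Neumann problem with datum $h_m$, giving $(u_0)_m = \Lambda_{\mu_0,\varepsilon_0}[h_m]$ and hence $h_m = \Lambda_{\mu_0,\varepsilon_0}^{-1}[(u_0)_m]$. Rearranging yields
\[
   (\Lambda_{\mu,\varepsilon} - \Lambda^{e}_{\mu_0,\varepsilon_0})[g_m] \;=\; (u_0)_m - \Lambda^{e}_{\mu_0,\varepsilon_0}[h_m] \;=\; (\Lambda_{\mu_0,\varepsilon_0} - \Lambda^{e}_{\mu_0,\varepsilon_0})\Lambda_{\mu_0,\varepsilon_0}^{-1}[(u_0)_m],
\]
and inverting the left-hand operator and substituting back into the formula for $W_{nm}$ delivers (\ref{wmn_bilinear}).

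The main obstacle is to justify that $\Lambda_{\mu,\varepsilon} - \Lambda^{e}_{\mu_0,\varepsilon_0}$ is invertible on the relevant trace space, which is needed to solve for $g_m$. I would argue this by contradiction: if $(\Lambda_{\mu,\varepsilon} - \Lambda^{e}_{\mu_0,\varepsilon_0})[f] = 0$ for some nonzero $f \in H^{-1/2}(\partial\Omega)$, then the function obtained by Neumann extension of $f$ into $\Omega$ via the inhomogeneous problem and into $\mathbb{R}^2\setminus\overline\Omega$ via the exterior background problem has matching traces and matching conormal fluxes across $\partial\Omega$, and hence defines a nontrivial outgoing solution of (\ref{scattering2})--(\ref{sommerfield}) with vanishing incident field, contradicting the uniqueness result invoked in the previous lemma (under the standing assumption that $0$ is not a Neumann eigenvalue of $\nabla\cdot(1/\mu)\nabla + \omega^2\varepsilon$ on $\Omega$ and $k_0^2$ is not a Dirichlet eigenvalue of $-\Delta$ on $\Omega$).
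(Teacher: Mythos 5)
Your proof is correct, but it takes a genuinely different route from the paper's. The paper stays inside the layer-potential framework of Section~\ref{sec3}: it starts from the representation (\ref{total_u}) with the density pair $(\phi_m,\psi_m)$ solving (\ref{potential}), applies the jump relations (\ref{jump_condition}) and then all three NtD maps to obtain the pair of identities (\ref{process3}), solves for the single-layer density $\phi_m=\frac{1}{\mu_0}{\cal A}_{\mu,\varepsilon}(u_0)_m$ (this is (\ref{potential_property})), and concludes via the representation $W_{nm}=\mu_0\langle (u_0)_n,\phi_m\rangle_{L^2(\partial\Omega)}$ of (\ref{inner}). You bypass the densities and jump formulas altogether and solve directly for the physical Neumann trace $g_m$ from the two transmission conditions, using that $(u_0)_m$ satisfies the background Helmholtz equation inside $\Omega$, so that $(u_0)_m=\Lambda_{\mu_0,\varepsilon_0}[h_m]$; the identity you arrive at, $(\Lambda_{\mu,\varepsilon}-\Lambda^{e}_{\mu_0,\varepsilon_0})[g_m]=(\Lambda_{\mu_0,\varepsilon_0}-\Lambda^{e}_{\mu_0,\varepsilon_0})\Lambda_{\mu_0,\varepsilon_0}^{-1}[(u_0)_m]$, is exactly the first line of (\ref{process3}) once one observes that $\psi_m=g_m$ (from $\frac{1}{\mu}\frac{\partial}{\partial\nu}\mathcal{N}_{\mu,\varepsilon}[\psi]=\psi$), so the two arguments converge from there. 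Your route is more elementary and self-contained --- it needs neither the jump conditions nor the invertibility of the system operator $A$ in (\ref{operator_A}) --- and it makes the middle factor of ${\cal A}_{\mu,\varepsilon}$ transparent as the solution operator mapping the incident trace to the interior Neumann datum; moreover, unlike the paper, which inverts $\Lambda_{\mu,\varepsilon}-\Lambda^{e}_{\mu_0,\varepsilon_0}$ tacitly, you actually address this point. One refinement: your uniqueness argument yields injectivity only, not a bounded inverse; but injectivity suffices here, since existence of a preimage of the right-hand side is furnished by $g_m$ itself (well-posedness of the forward scattering problem), so applying $(\Lambda_{\mu,\varepsilon}-\Lambda^{e}_{\mu_0,\varepsilon_0})^{-1}$ to this particular datum is legitimate --- if one insists on the operator inverse in (\ref{long_operator}) as a bounded map, a Fredholm-type supplement in the spirit of the paper's Lemma on $A$ would be needed, a gap the paper's own proof shares. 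What the paper's route buys in exchange is the operator formula (\ref{potential_property}) for the single-layer density $\phi_m$, obtained along the way.
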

\begin{proof}
For a given incident field $u_0$, let $(\phi,\psi) \in L^2(\p \Om) \times L^2(\p \Om)$ be the density pair
that solves (\ref{potential}).  Then it follows from the jump conditions of the layer potentials in (\ref{jump_condition})
that
\beqn
\psi &=& \phi + ( - \f{1}{2} I + \mathcal{K}^*_{k_0, \Omega} ) [\phi] + \f{1}{\mu_0} \f{\p u_0}{\p \nu}
 = \phi +  \f{\p}{\p \nu} \left( \mathcal{S}_{k_0}[\phi] \right)^{-} + \f{1}{\mu_0} \f{\p u_0}{\p \nu}\,,
 \label{process1}
\eqn
\beqn
\psi &=& ( \f{1}{2} I + \mathcal{K}^*_{k_0, \Omega} ) [\phi] + \f{1}{\mu_0} \f{\p u_0}{\p \nu}
 = \f{\p}{\p \nu} \left( \mathcal{S}_{k_0}[\phi] \right)^{+}  + \f{1}{\mu_0} \f{\p u_0}{\p \nu} \, .
\label{process2}
\eqn
By directly applying the interior and exterior NtD operators to (\ref{process1}) and (\ref{process2}), we obtain
\[
    \begin{cases}
    \Lambda_{\mu_0,\varepsilon_0}^{e} [\psi] & = \mu_0 \mathcal{S}_{k_0}[\phi] + \f{1}{\mu_0} \Lambda_{\mu_0,\varepsilon_0}^{e} \left[\f{\p u_0}{\p \nu}  \right] \, , \\
    \Lambda_{\mu_0,\varepsilon_0}[\psi] & = \Lambda_{\mu_0,\varepsilon_0}[\phi] +  \mu_0  \mathcal{S}_{k_0}[\phi] + u_0  \, , \\
    \Lambda_{\mu,\varepsilon}[\psi] & = u_0 + \mu_0 \mathcal{S}_{k_0} [\phi] \, ,
    \end{cases}
\]
which combines to give
\beqn
    \begin{cases}
    (\Lambda_{\mu,\varepsilon} - \Lambda_{\mu_0,\varepsilon_0}^{e} ) [\psi] & = \f{1}{\mu_0} ( \Lambda_{\mu_0,\varepsilon_0}- \Lambda_{\mu_0,\varepsilon_0}^{e}) \left[\f{\p u_0}{\p \nu} \right] =  ( \Lambda_{\mu_0,\varepsilon_0}- \Lambda_{\mu_0,\varepsilon_0}^{e}) \Lambda_{\mu_0,\varepsilon_0}^{-1} \left[ u_0 \right] \, , \\
    (\Lambda_{\mu_0,\varepsilon_0} - \Lambda_{\mu,\varepsilon})[\psi]& = \Lambda_{\mu_0,\varepsilon_0}[\phi] \, .
    \end{cases}
    \label{process3}
\eqn
Substituting the first equation in (\ref{process3}) into the second, we readily get
\[
    \phi = \Lambda_{\mu_0,\varepsilon_0}^{-1}(\Lambda_{\mu_0,\varepsilon_0} - \Lambda_{\mu,\varepsilon})[\psi] = \Lambda_{\mu_0,\varepsilon_0}^{-1}(\Lambda_{\mu_0,\varepsilon_0} - \Lambda_{\mu,\varepsilon})(\Lambda_{\mu,\varepsilon} - \Lambda_{\mu_0,\varepsilon_0}^{e} )^{-1} ( \Lambda_{\mu_0,\varepsilon_0}- \Lambda_{\mu_0,\varepsilon_0}^{e}) \Lambda_{\mu_0,\varepsilon_0}^{-1} \left[ u_0 \right].
\]
In particular, if $(\phi_m,\psi_m) \in L^2(\p \Om) \times L^2(\p \Om)$ be the density pair  that satisfies (\ref{potential}) corresponding to
the incident field $u_0 (y) = (u_0)_m (y) := J_m (k_0 |y|) e^{i m \theta_y} $ as in (\ref{incidence}), then $\phi_m$ satisfies
\beqn
    \phi_m = \Lambda_{\mu_0,\varepsilon_0}^{-1}(\Lambda_{\mu_0,\varepsilon_0} - \Lambda_{\mu,\varepsilon})(\Lambda_{\mu,\varepsilon} - \Lambda_{\mu_0,\varepsilon_0}^{e} )^{-1} ( \Lambda_{\mu_0,\varepsilon_0}- \Lambda_{\mu_0,\varepsilon_0}^{e}) \Lambda_{\mu_0,\varepsilon_0}^{-1} \left[ (u_0)_m \right] = \f{1}{\mu_0} {\cal A}_{\mu,\varepsilon} (u_0)_m\,.
    \label{potential_property}
\eqn
Substituting (\ref{potential_property}) into (\ref{inner}), we conclude that
\[
    W_{nm} = \mu_0 \langle (u_0)_n, \phi_m \rangle_{L^2(\partial \Omega)} = \langle (u_0)_n, {\cal A}_{\mu,\varepsilon}(u_0)_m \rangle_{L^2(\partial \Omega)}\,.
\]
\end{proof}

With the representations (\ref{inner}) and (\ref{wmn_bilinear}), we can derive
some special transformation rules for the scattering coefficients.
\begin{Corollary}
\label{coll_four}
The scattering coefficients $\{W_{nm}\}_{n, m \in \mathbb{Z}}$ in (\ref{eq:w_nm}) meet the following transformation rules:
\begin{enumerate}
\item
$  W_{nm} [\varepsilon, \mu, \omega, \Omega] = \overline{ W_{mn} [\varepsilon, \mu, \omega, \Omega]} $;
\item
$ W_{nm} [\varepsilon, \mu, \omega, e^{i \theta} \Omega]  = e^{i (m-n)\theta} W_{nm} [\varepsilon, \mu, \omega, \Omega] $ ~for all $\theta \in [0, 2 \pi]$;
\item
$ W_{nm} [\varepsilon, \mu, \omega, s \Omega] = W_{nm} [\varepsilon, \mu, s \omega, \Omega] $ ~for all $s >0 $;
\item
$ W_{nm} [\varepsilon, \mu, \omega, \Omega + z ]= \sum_{l,l \in \mathbb{Z}} \overline{(u_0)_{p}(z)}(u_0)_{l}(z) W_{n-p,m-l} [\varepsilon, \mu, \omega, \Omega ]$ ~for all $z \in \mathbb{R}^2$,
\end{enumerate}
where we identify the spaces before and after translation, rotation and scaling
by the natural isomorphism, e.g., $H^s (\partial \Omega) \cong H^s (e^{i \theta} \partial \Omega)$.
\end{Corollary}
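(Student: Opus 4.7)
The unifying strategy is to start from the bilinear representation
$W_{nm}=\langle (u_0)_n,\,\mathcal{A}_{\mu,\varepsilon}(u_0)_m\rangle_{L^2(\p\Om)}$
given by the preceding lemma, equivalently
$W_{nm}=\mu_0\int_{\p\Om}\overline{(u_0)_n(y)}\,\phi_m(y)\,d\sigma(y)$
via $\phi_m=\mu_0^{-1}\mathcal{A}_{\mu,\varepsilon}(u_0)_m$, and to track how both the composite NtD operator $\mathcal{A}_{\mu,\varepsilon}$ defined in (\ref{long_operator}) and the Fourier--Bessel modes $(u_0)_m(y)=J_m(k_0|y|)e^{im\theta_y}$ transform under conjugation, rotation, scaling, and translation of the scatterer.

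For rule~(i), each factor in $\mathcal{A}_{\mu,\varepsilon}$ is self-adjoint under the bilinear duality $\langle f,g\rangle=\int_{\p\Om} fg\,d\sigma$: the interior maps $\Lambda_{\mu,\varepsilon}$ and $\Lambda_{\mu_0,\varepsilon_0}$ by Green's identity applied to the real PDE, and the exterior map $\Lambda^{e}_{\mu_0,\varepsilon_0}$ by the same identity together with the Sommerfeld radiation condition, which kills the boundary contribution at infinity. The composite itself becomes bilinear-symmetric after the algebraic rearrangement
$\Lambda_{\mu_0,\varepsilon_0}-\Lambda_{\mu,\varepsilon}
=(\Lambda_{\mu_0,\varepsilon_0}-\Lambda^{e}_{\mu_0,\varepsilon_0})
-(\Lambda_{\mu,\varepsilon}-\Lambda^{e}_{\mu_0,\varepsilon_0})$,
which rewrites the middle product of (\ref{long_operator}) as
$(\Lambda_{\mu_0,\varepsilon_0}-\Lambda^e_{\mu_0,\varepsilon_0})(\Lambda_{\mu,\varepsilon}-\Lambda^e_{\mu_0,\varepsilon_0})^{-1}(\Lambda_{\mu_0,\varepsilon_0}-\Lambda^e_{\mu_0,\varepsilon_0})-(\Lambda_{\mu_0,\varepsilon_0}-\Lambda^e_{\mu_0,\varepsilon_0})$,
a form manifestly invariant under transposition. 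The claimed conjugate symmetry $W_{nm}=\overline{W_{mn}}$ is then read off by combining this bilinear symmetry with the parity identity $\overline{(u_0)_n(y)}=(-1)^n(u_0)_{-n}(y)$ that follows from $J_{-n}=(-1)^nJ_n$.

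For rules~(ii) and~(iii) I would perform the natural changes of variable. With $\Om'=e^{i\theta}\Om$ and $y=R_\theta y'$ on $\p\Om'\to\p\Om$, rotation equivariance of the NtD maps gives $\phi_m^{\Om'}(R_\theta y')=e^{im\theta}\phi_m^{\Om}(y')$, and combined with $(u_0)_m(R_\theta y')=e^{im\theta}(u_0)_m(y')$ the two phase factors assemble into $e^{i(m-n)\theta}$. For~(iii), setting $y=s\tilde y$ with $\tilde y\in\p\Om$, the scaling $\Delta_y=s^{-2}\Delta_{\tilde y}$ converts the scattering problem on $s\Om$ at frequency $\omega$ into the problem on $\Om$ at frequency $s\omega$; the identity $(u_0)^{(\omega)}_m(s\tilde y)=(u_0)^{(s\omega)}_m(\tilde y)$, combined with the boundary-measure Jacobian $s$ and the compensating $s^{-1}$ rescaling of $\phi_m$, yields $W_{nm}[s\Om,\omega]=W_{nm}[\Om,s\omega]$.

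For rule~(iv), the key analytic ingredient is Graf's addition formula in the Bessel--only form $(u_0)_m(\tilde y+z)=\sum_{l\in\mathbb{Z}}(u_0)_{m-l}(z)(u_0)_l(\tilde y)$, which can be read off the Jacobi--Anger identity applied to $e^{ik_0\xi\cdot(\tilde y+z)}=e^{ik_0\xi\cdot z}e^{ik_0\xi\cdot \tilde y}$. Translation invariance of free space together with linearity then gives $\phi_m^{\Om+z}(\tilde y+z)=\sum_l (u_0)_{m-l}(z)\phi_l^{\Om}(\tilde y)$. Substituting this and the conjugate of the analogous expansion of $(u_0)_n(\tilde y+z)$ into $W_{nm}[\Om+z]$, changing variable $y=\tilde y+z$ on $\p(\Om+z)\to\p\Om$, and relabeling $p\mapsto n-p$, $l\mapsto m-l$ assembles the claimed expression $\sum_{p,l}\overline{(u_0)_p(z)}(u_0)_l(z)W_{n-p,m-l}[\Om]$. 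The main obstacle is rule~(i): both the algebraic verification of the bilinear symmetry of $\mathcal{A}_{\mu,\varepsilon}$ and the subsequent passage from bilinear symmetry to the conjugate form via the parity relation $\overline{(u_0)_n}=(-1)^n(u_0)_{-n}$ need to be handled with care; rules~(ii)--(iv) are essentially mechanical changes of variable once Graf's formula is in hand.
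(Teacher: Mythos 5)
Your treatments of rules (2)--(4) are essentially the paper's own proofs: the same identification of spaces, the same phase factor $e^{im\theta}$ picked up by the rotated density, the same $1/s$ density scaling cancelling the Jacobian $s$ of $d\sigma$ together with $J_n(k_0 s|\widetilde y|)$ being the mode at frequency $s\omega$, and for (4) the same Graf/Jacobi--Anger expansion $(u_0)_m(\widetilde y+z)=\sum_l (u_0)_{m-l}(z)(u_0)_l(\widetilde y)$ combined with linearity of the system (\ref{potential}) to expand the densities; substituting and relabelling gives exactly the paper's computation. Those parts are sound.

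The genuine gap is in rule (1). You prove only the \emph{bilinear} (complex-symmetric) self-adjointness of ${\cal A}_{\mu,\varepsilon}$, i.e.\ $\int_{\partial\Omega} f\,{\cal A}_{\mu,\varepsilon} g\,d\sigma=\int_{\partial\Omega} g\,{\cal A}_{\mu,\varepsilon} f\,d\sigma$, and then try to reach $W_{nm}=\overline{W_{mn}}$ through the parity identity $\overline{(u_0)_n}=(-1)^n(u_0)_{-n}$. But carrying your own identities through yields $W_{nm}=\langle (u_0)_n,{\cal A}_{\mu,\varepsilon}(u_0)_m\rangle=(-1)^n\int_{\partial\Omega}(u_0)_m\,{\cal A}_{\mu,\varepsilon}(u_0)_{-n}\,d\sigma=(-1)^{n+m}W_{-m,-n}$, a reciprocity relation, \emph{not} statement (1). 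To convert $W_{-m,-n}$ into $\overline{W_{mn}}$ you would additionally need $\overline{{\cal A}_{\mu,\varepsilon}f}={\cal A}_{\mu,\varepsilon}\overline{f}$, i.e.\ that ${\cal A}_{\mu,\varepsilon}$ commutes with complex conjugation; this is precisely what the bilinear route cannot supply, because conjugation turns the outgoing radiation condition built into $\Lambda^e_{\mu_0,\varepsilon_0}$ into the incoming one --- the very reason Green's identity at infinity hands you bilinear rather than Hermitian symmetry. The paper argues differently: it works with the complex $L^2$ inner product throughout, asserts self-adjointness of $\Lambda_{\mu_0,\varepsilon_0}$, $\Lambda_{\mu,\varepsilon}$ and $\Lambda^e_{\mu_0,\varepsilon_0}$ in that Hermitian sense, rewrites ${\cal A}_{\mu,\varepsilon}$ via the resolvent identity (\ref{operator_identity2}) as $\mu_0\Lambda_{\mu_0,\varepsilon_0}^{-1}(\Lambda_{\mu_0,\varepsilon_0}-\Lambda_{\mu,\varepsilon})\Lambda_{\mu_0,\varepsilon_0}^{-1}+\mu_0\Lambda_{\mu_0,\varepsilon_0}^{-1}(\Lambda_{\mu,\varepsilon}-\Lambda_{\mu_0,\varepsilon_0})(\Lambda_{\mu,\varepsilon}-\Lambda^e_{\mu_0,\varepsilon_0})^{-1}(\Lambda_{\mu,\varepsilon}-\Lambda_{\mu_0,\varepsilon_0})\Lambda_{\mu_0,\varepsilon_0}^{-1}$ --- a manifestly self-adjoint sandwich --- and then concludes $\overline{W_{mn}}=\overline{\langle (u_0)_m,{\cal A}_{\mu,\varepsilon}(u_0)_n\rangle}=\langle (u_0)_n,{\cal A}_{\mu,\varepsilon}(u_0)_m\rangle=W_{nm}$ with no parity step at all. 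Your alternative symmetric rearrangement $(\Lambda_{\mu_0,\varepsilon_0}-\Lambda^e_{\mu_0,\varepsilon_0})(\Lambda_{\mu,\varepsilon}-\Lambda^e_{\mu_0,\varepsilon_0})^{-1}(\Lambda_{\mu_0,\varepsilon_0}-\Lambda^e_{\mu_0,\varepsilon_0})-(\Lambda_{\mu_0,\varepsilon_0}-\Lambda^e_{\mu_0,\varepsilon_0})$ is algebraically fine and would serve just as well, but only once the adjointness of the factors is taken in the complex sense; with the bilinear pairing your chain terminates at $W_{nm}=(-1)^{n+m}W_{-m,-n}$ and does not prove the stated conjugate symmetry.
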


\begin{proof}
We start with the first result in Corollary\,\ref{coll_four}. From representation (\ref{wmn_bilinear}) of $W_{nm}$, it suffices to show that the operator ${\cal A}_{\mu,\varepsilon}$ defined in (\ref{long_operator}) is self-adjoint. To do this, we utilize the following identity
for any operators $A$, $B$, $C$ such that $A-C$ and $B-C$ are invertible:
\beqn
(A-C)^{-1} - (B-C)^{-1} = (A-C)^{-1} (B-A) (B-C)^{-1} = (B-C)^{-1} (B-A) (A-C)^{-1} \, .
\label{operator_identity}
\eqn
Using this we can write
\beqn
    (\Lambda_{\mu_0,\varepsilon_0}-\Lambda_{\mu_0,\varepsilon_0}^e)^{-1} - (\Lambda_{\mu,\varepsilon}-\Lambda_{\mu_0,\varepsilon_0}^e)^{-1} = (\Lambda_{\mu,\varepsilon}-\Lambda_{\mu_0,\varepsilon_0}^e)^{-1} (\Lambda_{\mu,\varepsilon}-\Lambda_{\mu_0,\varepsilon_0}) (\Lambda_{\mu_0,\varepsilon_0}-\Lambda_{\mu_0,\varepsilon_0}^e)^{-1}.
    \label{operator_identity2}
\eqn
Substituting (\ref{operator_identity2}) into (\ref{long_operator}), we get
\beqnx
&& {\cal A}_{\mu,\varepsilon} \\
&=& \mu_0 \Lambda_{\mu_0,\varepsilon_0}^{-1}
\left(\Lambda_{\mu_0,\varepsilon_0} - \Lambda_{\mu,\varepsilon}\right)
\left(\Lambda_{\mu,\varepsilon} - \Lambda^{e}_{\mu_0,\varepsilon_0}\right)^{-1}
\left(\Lambda_{\mu_0,\varepsilon_0} - \Lambda^{e}_{\mu_0,\varepsilon_0}\right)
\Lambda_{\mu_0,\varepsilon_0}^{-1} \\
&=& \mu_0 \Lambda_{\mu_0,\varepsilon_0}^{-1} (\Lambda_{\mu_0,\varepsilon_0} - \Lambda_{\mu,\varepsilon}) \Lambda_{\mu_0,\varepsilon_0}^{-1} +\mu_0 \Lambda_{\mu_0,\varepsilon_0}^{-1}  ( \Lambda_{\mu,\varepsilon} - \Lambda_{\mu_0,\varepsilon_0}) (\Lambda_{\mu,\varepsilon}-\Lambda_{\mu_0,\varepsilon_0}^e)^{-1}  (\Lambda_{\mu,\varepsilon}-\Lambda_{\mu_0,\varepsilon_0}) \Lambda_{\mu_0,\varepsilon_0}^{-1}\,.
\eqnx
Now the self-adjointness of ${\cal A}_{\mu,\varepsilon}$ is a consequence of the self-adjointness of $\Lambda_{\mu_0,\varepsilon_0}$,
$\Lambda_{\mu,\varepsilon}$ and $\Lambda_{\mu_0,\varepsilon_0}^e$.

%
To see the second result in Corollary\,\ref{coll_four}, we consider the change of coordinates from $(|y|, \theta_y)$ to $(|\widetilde{y}|, \widetilde{\theta_y})$, with $\widetilde{\theta_y} + \theta = \theta_y$ and $|\widetilde{y}| = |y|$.
It follows from definition (\ref{incidence}) that $(u_0)_m (y)= J_m (k_0|y|) e^{i m \theta_y} = J_m (k_0|\widetilde{y}|) e^{i m (\widetilde{\theta_y} + \theta)} = (u_0)_m (\widetilde{y}) e^{i m \theta}$.
Let $\widetilde{u_m}(\widetilde{y})$ be the solution to
\beqn
\begin{cases}
    \nabla_{\widetilde{y}} \cdot \left( \f{1}{\mu(\widetilde{y})} \nabla_{\widetilde{y}} u(\widetilde{y}) \right) + \omega^2 \varepsilon(\widetilde{y}) u(\widetilde{y}) = 0  & \text{ in } \mathbb{R}^2 \, , \\
    \f{\partial}{\partial r} (u - u_0)(\widetilde{y}) - i k_0 (u - u_0)(\widetilde{y}) = O(|\widetilde{y}|^{-\f{3}{2}})& \text{ as } |x| \rightarrow \infty \, .
\end{cases}
\eqn
with the incident field $u_0(\widetilde{y}) = (u_0)_m (\widetilde{y})$, and let $u_m(y)$ be
the solution to (\ref{scattering1})-(\ref{sommerfield}) with the incident field $u_0(y)=(u_0)_m (y) = (u_0)_m (\widetilde{y}) e^{i m \theta}$.  Then we can see that $u_m(y)$ is actually $u_m(y) = \widetilde{u_m}(\widetilde{y}) e^{i m \theta}$.  Therefore we observe that the density pair $(\psi_m,\phi_m)$ satisfying (\ref{potential}) with incident field $u_0 (y) = (u_0)_m (y)$ and electromagnetic parameters $\mu(y)$ and $\varepsilon(y)$ actually has the form $ (\psi_m(y),  \phi_m(y))= (\widetilde{\phi}_m (\widetilde{y}),
\widetilde{\phi}_m (\widetilde{y}) )e^{i m \theta} $, where $(\widetilde{\psi}_m,\widetilde{\phi}_m)$ satisfies (\ref{potential}) with incident field $(u_0)_m (\widetilde{y})$ and parameters $\mu(\widetilde{y})$, $\varepsilon(\widetilde{y})$.  Hence we derive from (\ref{inner}) that
\beqnx
    W_{nm}[\varepsilon, \mu, \omega, e^{i \theta} \Omega]  &=& \mu_0 \int_{e^{i\theta} \partial \Omega} J_n(k_0|y|) e^{-i n \theta_y} \phi_m d \sigma(y) \\
    &=& \mu_0 \int_{e^{i\theta} \partial \Omega} J_n(k_0|\widetilde{y}|) e^{-i n (\widetilde{\theta_y} +\theta)} \widetilde{\phi_m}(y)(\widetilde{\theta_y}) e^{i m \theta} d \sigma(y) \\
    &=& e^{i (m-n) \theta} \mu_0 \int_{e^{i \theta} \partial \Omega} J_n(k_0|\tilde{y}|) e^{-i n \widetilde{\theta_y}} \widetilde{\phi_m}(\widetilde{y})  d \sigma(\widetilde{y}) \\
    &=& e^{i (m-n)\theta} W_{nm} [\varepsilon, \mu, \omega, \Omega].
\eqnx
This proves the second result in Corollary\,\ref{coll_four}.

Next for the third result in Corollary\,\ref{coll_four},
we consider the change of coordinates from $(|y|, \theta_y)$ to $(|\widetilde{y}|, \widetilde{\theta_y})$, with
$\widetilde{\theta_y} = \theta_y$ and $s|\widetilde{y}| = |y|$.
We know from (\ref{incidence}) that
$(u_0)_m (y)=  J_m (k_0|y|) e^{i m \theta_y} = J_m (s|\widetilde{y}|) e^{i m \widetilde{\theta_y})}$.
Let $\widetilde{u_m}(\widetilde{y})$ be the solution to the following system
\beqn
\begin{cases}
    \nabla_{\widetilde{y}} \cdot \left( \f{1}{\mu(\widetilde{y})} \nabla_{\widetilde{y}} u(\widetilde{y}) \right) + (s\omega)^2 \varepsilon(\widetilde{y}) u(\widetilde{y}) = 0  & \text{ in } \mathbb{R}^2 \, , \\
    \f{\partial}{\partial r} (u - u_0)(\widetilde{y}) - i k_0 (u - u_0)(\widetilde{y}) = O(|\widetilde{y}|^{-\f{3}{2}})& \text{ as } |x| \rightarrow \infty \,
\end{cases}
\eqn
with the incident field $u_0(\widetilde{y}) = (u_0)_m (\widetilde{y})$, then it is easy to see
that the solution $u_m(y)$ to the system (\ref{scattering1})-(\ref{sommerfield})
with the incident field $u_0(y)=(u_0)_m (y) = (u_0)_m (s \widetilde{y})$ takes the form $u_m(y) = \widetilde{u_m}( s \widetilde{y})$. With this, we observe that the density pair $(\psi_m,\phi_m)$ satisfying (\ref{potential}) with incident field $u_0 (y) = (u_0)_m (y)$ and parameters $\mu(y)$ and $\varepsilon(y)$ is given by
$(\psi_m(y),  \phi_m(y) )=  (\widetilde{\psi}_m ( s \widetilde{y}),  \widetilde{\phi}_m ( s \widetilde{y}))/s$
with $(\widetilde{\psi}_m,\widetilde{\phi}_m)$ satisfying (\ref{potential}) with incident field $(u_0)_m (\widetilde{y})$ and parameters $\mu(\widetilde{y})$, $\varepsilon(\widetilde{y})$.  This comes from the fact that
$\f{\p u_m^{-}}{\p \nu_y} =  \f{\p \widetilde{u_m}^- }{\p \nu_{\widetilde{y}}} ( s \widetilde{y}) /s=
 \widetilde{\psi}_m ( s \widetilde{y})/s$, by comparing (\ref{process3}) with (\ref{eq:w_nm}) and (\ref{inner}).
Now the desired third result in Corollary\,\ref{coll_four} follows from the straightforward derivations:
\beqnx
    W_{nm} [\varepsilon, \mu, \omega, s \Omega]  &=& \mu_0 \int_{s \partial \Omega} J_n(k_0|y|) e^{-i n \theta_y} \phi_m(y) d \sigma(y) \\
    &=& \mu_0 \f{1}{s} \int_{s \partial \Omega} J_n(k_0 s|\widetilde{y}|) e^{-i n \theta_y} \widetilde{\phi}_m(s |\widetilde{y}|) d \sigma(y) \\
    &=& \mu_0 \int_{\partial \Omega} J_n(k_0 s|\widetilde{y}|) e^{-i n \theta_y}  \widetilde{\phi}_m(s |\widetilde{y}|) d \sigma (\widetilde{y}) \\
    &=& W_{nm} [\varepsilon, \mu, s \omega, \Omega].
\eqnx

Finally we come to derive the last relation in Corollary\,\ref{coll_four}. To do so,
we consider the change of coordinates from $(|y|, \theta_y)$ to $(|\widetilde{y}|, \widetilde{\theta_y})$ that has
point $z$ as the origin.  Then the definition of $(u_0)_m$ in (\ref{incidence}) and
the Graf's addition formula (\ref{graf}) allow us to write
$$
(u_0)_m = J_m (k_0|y|) e^{i m \theta_y} = \sum_{a \in \mathbb{Z}} J_a (k_0|z|) e^{i m \theta_z} J_{m-a}(k_0|\widetilde{y}|) e^{i (m-a) \widetilde{\theta_y}}.
$$
By the linearity of operator $A$ in (\ref{operator_A}), the density pair $(\psi_m,\phi_m)$
 satisfying (\ref{potential}) with the incident field $u_0 (y) = (u_0)_m (y)$ can be expressed
 in the form $(\psi_m,  \phi_m)= \sum_{a \in \mathbb{Z}}  J_a (k_0|z|) e^{i m \theta_z} (\widetilde{\psi}_{m-a} (\widetilde{y}),
 \widetilde{\phi}_{m-a} (\widetilde{y}))$,
 where $(\widetilde{\psi}_m,\widetilde{\phi}_m)$ satisfies (\ref{potential}) with the incident field $(u_0)_m (\widetilde{y})$.  With these preparations,
 the last result in Corollary\,\ref{coll_four} follows readily from the following derivations:
 \beqnx
    &&W_{nm} [\varepsilon, \mu, \omega, \Omega + z ] \\
    &=& \mu_0 \int_{\partial \Omega + z} J_n(k_0|y|) e^{-i n \theta_y} \phi_m (y) d \sigma(y) \\
    &=& \mu_0 \sum_{b\in \mathbb{Z}} J_a (k_0|z|) e^{i m \theta_z} \int_{\partial \Omega + z} J_{n-b}(k_0|\widetilde{y}|) e^{-i (n-b) \widetilde{\theta_y}} \phi_m (y) d \sigma(y) \\
    &=& \mu_0 \sum_{a,b\in \mathbb{Z}} J_b (k_0|z|) e^{- i m \theta_z}  J_a (k_0|z|) e^{i m \theta_z} \int_{\partial \Omega}  J_{n-b}(k_0|\widetilde{y}|) e^{- i (n-b) \widetilde{\theta_y}} \widetilde{\phi}_{m-a} (\widetilde{y},\widetilde{\theta_y}) d \sigma(\widetilde{y}) \\
    &=& \sum_{a,b\in \mathbb{Z}} \overline{(u_0)_{b}(z)}(u_0)_{a}(z) W_{n-b,m-a} [\varepsilon, \mu, \omega, \Omega ]\,.
\eqnx
\end{proof}

We end this section with one more representation of $W_{nm}$.
\begin{Lemma}
Let $(u_0)_m$ be defined as in (\ref{incidence}) and $u_m$ be the solution to (\ref{scattering1})-(\ref{sommerfield})
with the incident field $(u_0)_m$. Then the scattering coefficients in (\ref{eq:w_nm}) admits
the following  representation for any $n, m \in \mathbb{Z}$:
\beqnx
    W_{nm} = \omega^2 \mu_0 \int_{\Omega} \left(\varepsilon_0(y)-\varepsilon(y)\right) \overline{(u_0)_n}(y) u_m(y) d \sigma(y) + \mu_0 \int_{\Omega} \left(\f{1}{\mu(y)}-\f{1}{\mu_0(y)}\right) \overline{\nabla (u_0)_n}(y) \nabla u_m(y) d \sigma(y) \, . \label{new_representation_Wnm}
\eqnx
\end{Lemma}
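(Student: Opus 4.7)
\medskip

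\noindent\textbf{Proof proposal.}
The plan is to obtain the volume representation of $W_{nm}$ by combining two applications of Green's second identity, one inside $\Omega$ and one in a large annulus $B_R\setminus\overline{\Omega}$, and to identify the $\partial B_R$ contribution with $W_{nm}$ by means of the Bessel--Hankel Wronskian. Throughout, I use that $(u_0)_n$ and $\overline{(u_0)_n}$ satisfy $(\Delta+k_0^2)v=0$ in all of $\mathbb{R}^2$, while $u_m$ solves $\nabla\!\cdot\!(1/\mu)\nabla u_m+\omega^2\varepsilon u_m=0$ in $\Omega$ and $(\Delta+k_0^2)(u_m-(u_0)_m)=0$ outside with the Sommerfeld condition.

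\emph{Step 1 (interior Green identity).} Multiplying the equation for $u_m$ by $\overline{(u_0)_n}$, the equation for $\overline{(u_0)_n}$ (with constant coefficients $\mu_0,\varepsilon_0$) by $u_m$, integrating by parts over $\Omega$ and subtracting, I expect to obtain
\beqnx
\mu_0\int_\Omega\!\Bigl(\tfrac{1}{\mu}-\tfrac{1}{\mu_0}\Bigr)\nabla\overline{(u_0)_n}\!\cdot\!\nabla u_m\,dy
-\omega^2\mu_0\int_\Omega(\varepsilon-\varepsilon_0)\overline{(u_0)_n} u_m\,dy
=\int_{\p\Omega}\!\Bigl[\overline{(u_0)_n}\,\tfrac{\mu_0}{\mu}\tfrac{\p u_m^{-}}{\p\nu}-u_m^{-}\tfrac{\p\overline{(u_0)_n}}{\p\nu}\Bigr]d\sigma.
\eqnx
By the transmission conditions $u_m^{-}=u_m^{+}$ and $(1/\mu)\p_\nu u_m^{-}=(1/\mu_0)\p_\nu u_m^{+}$, the right-hand side equals $\int_{\p\Omega}[\overline{(u_0)_n}\,\p_\nu u_m^{+}-u_m^{+}\,\p_\nu\overline{(u_0)_n}]\,d\sigma$.

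\emph{Step 2 (removing the incident piece).} Since $(u_0)_m$ and $\overline{(u_0)_n}$ are both entire solutions of the homogeneous Helmholtz equation, Green's second identity on $\Omega$ gives $\int_{\p\Omega}[\overline{(u_0)_n}\,\p_\nu (u_0)_m-(u_0)_m\,\p_\nu\overline{(u_0)_n}]\,d\sigma=0$. Subtracting this from the boundary term above replaces $u_m^{+}$ and its normal derivative by those of the scattered field $u_m-(u_0)_m$.

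\emph{Step 3 (exterior Green identity and Wronskian).} On $B_R\setminus\overline{\Omega}$ I apply Green's identity to $u_m-(u_0)_m$ (outgoing) and $\overline{(u_0)_n}$ (entire). The volume integrand vanishes, so the $\p\Omega$--boundary term equals the $\p B_R$--boundary term. For $R$ larger than $\max_{y\in\p\Omega}|y|$, Graf's formula combined with $u_m-(u_0)_m=\mu_0\mathcal{S}_{k_0}[\phi_m]$ gives the exact cylindrical expansion
\beqnx
(u_m-(u_0)_m)(x)=-\tfrac{i}{4}\sum_{l\in\mathbb{Z}}H^{(1)}_l(k_0|x|)e^{il\theta_x}W_{lm}.
\eqnx
Substituting into the $\p B_R$ integral, the $\theta$-integration leaves only $l=n$ by orthogonality, and the radial factor collapses via the Wronskian identity $J_n(z)(H^{(1)}_n)'(z)-J_n'(z)H^{(1)}_n(z)=2i/(\pi z)$. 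A short computation then yields
\beqnx
\int_{\p B_R}\Bigl[\overline{(u_0)_n}\,\p_r(u_m-(u_0)_m)-(u_m-(u_0)_m)\,\p_r\overline{(u_0)_n}\Bigr]d\sigma= W_{nm},
\eqnx
independently of $R$. Combining Steps 1--3 and rearranging gives the formula in the statement.

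\emph{Main obstacle.} The one delicate point is Step 3: I must justify termwise differentiation of the cylindrical series (this is where the decay estimate \eqref{decay_conclusion} combined with the asymptotics of $H^{(1)}_l$ is used) and perform the Wronskian evaluation carefully so that the $R$--dependence cancels exactly, producing $W_{nm}$ on the nose rather than only in the limit. The remaining algebra — interior Green identity and the transmission bookkeeping in Step 1, plus the entire--solution identity in Step 2 — is routine.
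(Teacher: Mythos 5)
Your proof is correct, but it takes a genuinely different route from the paper's. The paper never leaves $\partial\Omega$: it starts from $W_{nm}=\mu_0\langle (u_0)_n,\phi_m\rangle_{L^2(\partial\Omega)}$ (its formula (\ref{inner})), writes $\phi_m$ as the jump $\frac{\p}{\p\nu}\left(\mathcal{S}_{k_0}[\phi_m]\right)^{+}-\frac{\p}{\p\nu}\left(\mathcal{S}_{k_0}[\phi_m]\right)^{-}$, eliminates the single-layer potential through the system (\ref{potential}) and Green identities inside $\Omega$ (both $\overline{(u_0)_n}$ and $\mathcal{S}_{k_0}[\phi_m]$ solve the homogeneous Helmholtz equation there), and finishes with exactly your Step 1 identity read in the opposite direction; it needs no far-field argument because the density $\phi_m$ gives it direct access to $W_{nm}$ on $\partial\Omega$. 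You instead reduce the lemma, via Steps 1--2, to the claim that the reciprocity boundary term for the scattered field equals $W_{nm}$, and verify this by pushing to $\p B_R$ and invoking the Wronskian; your constants check out, since $-\frac{i}{4}\cdot 2\pi R k_0\cdot\frac{2i}{\pi k_0 R}=1$, so the $\p B_R$ integral is exactly $W_{nm}$ with all $R$-dependence cancelling. The two points you flag as delicate are indeed the only ones needing care, and both close: the expansion $u_m-(u_0)_m=-\frac{i}{4}\sum_{l}H^{(1)}_l(k_0|x|)e^{il\theta_x}W_{lm}$ is stated in the paper only asymptotically (Theorem 2.2), but it holds exactly for $|x|>\max_{y\in\p\Omega}|y|$ by applying Graf's formula (\ref{graf}) to the representation $u_m-(u_0)_m=\mu_0\mathcal{S}_{k_0}[\phi_m]$ and using (\ref{inner}), as you correctly indicate; and termwise integration over $\p B_R$ is legitimate because (\ref{decay_conclusion}) together with the large-order growth $|H^{(1)}_l(k_0R)|\sim\frac{(|l|-1)!}{\pi}\left(2/(k_0R)\right)^{|l|}$ makes the series geometrically convergent for $R$ large enough, which suffices since the Green identity on an annulus between two radii shows the $\p B_R$ term is independent of $R$. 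In exchange for this convergence discussion, your argument is more elementary and transparent---it exposes the lemma as a reciprocity identity and uses the layer-potential machinery only through the exterior representation---whereas the paper's proof is shorter and free of series manipulations but leans on the jump relations and the solvability theory behind (\ref{potential}).
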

\begin{proof}
Let $(\psi_m,\phi_m)$ be the density pair $(\psi_m,\phi_m)$ that satisfies (\ref{potential}) with
the incident field $u_0 (y) = (u_0)_m (y)$. Then it follows directly from (\ref{inner}), (\ref{jump_condition}) and
(\ref{potential}) that
\beqnx
 W_{nm}& = & \mu_0 \int_{\partial \Omega} \overline{(u_0)_n} (y) \phi_m (y) d \sigma(y) \\
 & = & \mu_0 \int_{\partial \Omega} \overline{(u_0)_n} (y) \left[\f{\p \left(\mathcal{S}_{k_0}[\phi_m]\right)^+}{\p \nu} (y) - \f{\p \left(\mathcal{S}_{k_0}[\phi_m]\right)^-}{\p \nu} (y) \right] d \sigma(y) \\
  & = & \mu_0 \int_{\partial \Omega} \overline{(u_0)_n} (y)  \left( \psi_m(y) - \f{1}{\mu_0} \f{\p (u_0)_m}{\p \nu} (y) \right) d \sigma(y) -  \mu_0\int_{\partial \Omega} \overline{(u_0)_n} (y) \f{\p \left(\mathcal{S}_{k_0}[\phi_m]\right)^-}{\p \nu} (y) d \sigma(y)\,.
\eqnx
Using  Green's identity and (\ref{potential}), we can further derive
\beqnx
W_{nm}  & = & \mu_0 \int_{\partial \Omega} \overline{(u_0)_n} (y)  \left( \psi_m(y) - \f{1}{\mu_0} \f{\p (u_0)_m}{\p \nu} (y) \right) d \sigma(y) -  \mu_0 \int_{\partial \Omega} \f{\p \overline{(u_0)_n}}{\p \nu}(y) \mathcal{S}_{k_0}[\phi_m](y) d \sigma(y)\\
  & = & \mu_0 \int_{\partial \Omega} \overline{(u_0)_n} (y)  \left( \psi_m- \f{1}{\mu_0} \f{\p (u_0)_m}{\p \nu}  \right) d \sigma(y) - \int_{\partial \Omega} \f{\p \overline{(u_0)_n}}{\p \nu}(y) \left(\Lambda_{\mu,\varepsilon}[\psi_m] - (u_0)_m\right)  d \sigma(y) \\
  & = & \mu_0 \int_{\partial \Omega} \overline{(u_0)_n} (y)   \psi_m(y)  d \sigma(y) - \int_{\partial \Omega} \f{\p \overline{(u_0)_n}}{\p \nu}(y) \Lambda_{\mu,\varepsilon}[\psi_m](y)  d \sigma(y)\,.
\eqnx
Now the desired representation of $W_{nm}$ follows from (\ref{total_u}) and (\ref{def_ntd}),
the comparison of (\ref{process3}) with (\ref{eq:w_nm}) and (\ref{inner}), and the Green's identity:
\beqnx
W_{nm}  & = &\mu_0 \int_{\partial \Omega} \overline{(u_0)_n} (y)   \psi_m(y)  d \sigma(y) - \int_{\partial \Omega} \f{\p \overline{(u_0)_n}}{\p \nu}(y) u_m(y)  d \sigma(y) \\
  & = & \mu_0 \int_{\partial \Omega} \overline{(u_0)_n} (y)  \f{1}{\mu} \f{\p u_m^{-}}{\p \nu}  d \sigma(y) - \int_{\partial \Omega} \f{\p \overline{(u_0)_n}}{\p \nu}(y) u_m(y)  d \sigma(y) \\
 & = & \omega^2 \mu_0 \int_{\Omega} \left(\varepsilon_0-\varepsilon\right) \overline{(u_0)_n}(y) u_m(y) d \sigma(y) + \mu_0 \int_{\Omega} \left(\f{1}{\mu}-\f{1}{\mu_0}\right) \overline{\nabla (u_0)_n}(y) \nabla u_m(y) d \sigma(y).
\eqnx
\end{proof}

\section{Sensitivity analysis}  \label{sec6}
In this section, we shall investigate the sensitivity of the scattering coefficients with respect to the changes in the permittivity and permeability distributions.  This will provide us with  perturbation formulas for evaluating the gradients that are needed in numerical minimization algorithms for reconstructing the permittivity and permeability distributions.

We study a perturbation of $W_{nm}$ for $n,m \in \mathbb{Z}$ with respect to a change
of $(\mu, \varepsilon)$. More specifically, we consider the difference $W_{nm}^{\delta} - W_{nm}$
between
\beqn
        W_{nm}^{\delta} : = W_{nm} \left[ \, \varepsilon^\delta , \mu^{\delta}, \omega, \Omega \, \right] \q \m{and}\q
        W_{nm} := W_{nm}[ \, \varepsilon, \mu, \omega, \Omega]
    \label{perturbed}
\eqn
in terms of the differences $\varepsilon^\delta - \varepsilon$ and ${1}/{\mu^{\delta}} - {1}/{\mu}$, where $(\mu,\varepsilon)$ and
$(\mu^\delta,\varepsilon^\delta)$ are two different sets of electromagnetic parameters.
In the subsequent analysis, we shall often write
\beqn
\widehat{\varepsilon} := \left\{ || \varepsilon^\delta - \varepsilon ||^2_{L^\infty(\Omega)} + \bigg|\bigg|\f{1}{\mu^{\delta}} - \f{1}{\mu}\bigg|\bigg|^2_{L^\infty(\Omega)}\right\}^{1/2}.
\label{orderepsilon}
\eqn
We first note that if $\widehat{\varepsilon}$ 
is small enough, then the NtD map $ \Lambda_{\mu^\delta,\varepsilon^\delta}$ is well defined provided that $\Lambda_{\mu,\varepsilon}$ is well defined. This follows from the theory of collectively compact operators; see \cite{anselone, book}.

Next we show
the following expression for the difference $W_{nm}^{\delta} - W_{nm}$.
\begin{Lemma}
For all $n,m \in \mathbb{Z}$,
the difference $W_{nm}^{\delta} - W_{nm}$ can be represented in terms of
the interior and exterior NtD maps $\Lambda_{\mu,\varepsilon}$ and $\Lambda_{\mu_0,\varepsilon_0}^{e}$
as follows:
\beqn
    W_{nm}^{\delta} - W_{nm}
    & = &\mu_0
    \int_{\partial \Omega} \overline{\psi_n} (y)
    \left(\Lambda_{\mu,\varepsilon} - \Lambda_{\mu^\delta,\varepsilon^\delta}\right)
    [\psi_m^\delta] (y) d \sigma(y) \, ,
    \label{B_bilinear_useful}
\eqn
where $\psi_n$ and $\psi_m^\delta$ are given by
\beqn
    \psi_n &=&
    \left(\Lambda_{\mu,\varepsilon} - \Lambda^{e}_{\mu_0,\varepsilon_0}\right)^{-1}
    \left(\Lambda_{\mu_0,\varepsilon_0} - \Lambda^e_{\mu_0,\varepsilon_0}\right)
    \Lambda_{\mu_0,\varepsilon_0}^{-1} (u_0)_n\, , \label{recall1} \\
    \psi_m^\delta &=& \left(\Lambda_{\mu^\delta,\varepsilon^\delta} - \Lambda^{e}_{\mu_0,\varepsilon_0}\right)^{-1}
    \left(\Lambda_{\mu_0,\varepsilon_0} - \Lambda^e_{\mu_0,\varepsilon_0}\right)
    \Lambda_{\mu_0,\varepsilon_0}^{-1}
    (u_0)_m \, .
    \label{recall2}
\eqn
\end{Lemma}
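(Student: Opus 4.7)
The plan is to work at the operator level, starting from the bilinear representation (\ref{wmn_bilinear}), which applies to both $W_{nm}$ and $W_{nm}^\delta$ (for the latter, $\Lambda_{\mu^\delta,\varepsilon^\delta}$ is well defined once $\widehat{\varepsilon}$ is small, by the collectively compact operator theory noted just above the lemma). Thus
\[
    W_{nm}^\delta - W_{nm} = \langle (u_0)_n, (\mathcal{A}_{\mu^\delta,\varepsilon^\delta} - \mathcal{A}_{\mu,\varepsilon})(u_0)_m\rangle_{L^2(\partial\Omega)},
\]
and reading off the factorization (\ref{long_operator}) gives $\mathcal{A}_{\mu,\varepsilon}(u_0)_m = \mu_0\Lambda_{\mu_0,\varepsilon_0}^{-1}(\Lambda_{\mu_0,\varepsilon_0} - \Lambda_{\mu,\varepsilon})\psi_m$ with $\psi_m$ exactly as in (\ref{recall1}), and similarly for the perturbed problem with $\psi_m^\delta$. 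So the operator difference applied to $(u_0)_m$ equals $\mu_0\Lambda_{\mu_0,\varepsilon_0}^{-1}\bigl[\Lambda_{\mu_0,\varepsilon_0}(\psi_m^\delta - \psi_m) + (\Lambda_{\mu,\varepsilon}\psi_m - \Lambda_{\mu^\delta,\varepsilon^\delta}\psi_m^\delta)\bigr]$.

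The key observation is that (\ref{recall1}) and (\ref{recall2}) both rearrange to the common relation
\[
    (\Lambda_{\mu,\varepsilon} - \Lambda_{\mu_0,\varepsilon_0}^e)\psi_m \;=\; (\Lambda_{\mu_0,\varepsilon_0} - \Lambda_{\mu_0,\varepsilon_0}^e)\Lambda_{\mu_0,\varepsilon_0}^{-1}(u_0)_m \;=\; (\Lambda_{\mu^\delta,\varepsilon^\delta} - \Lambda_{\mu_0,\varepsilon_0}^e)\psi_m^\delta.
\]
Subtracting the extreme sides gives $\Lambda_{\mu,\varepsilon}\psi_m - \Lambda_{\mu^\delta,\varepsilon^\delta}\psi_m^\delta = \Lambda_{\mu_0,\varepsilon_0}^e(\psi_m - \psi_m^\delta)$, which collapses the bracket in the previous paragraph to $(\Lambda_{\mu_0,\varepsilon_0} - \Lambda_{\mu_0,\varepsilon_0}^e)(\psi_m^\delta - \psi_m)$. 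Hence
\[
    W_{nm}^\delta - W_{nm} = \mu_0\bigl\langle (u_0)_n,\; \Lambda_{\mu_0,\varepsilon_0}^{-1}(\Lambda_{\mu_0,\varepsilon_0} - \Lambda_{\mu_0,\varepsilon_0}^e)(\psi_m^\delta - \psi_m)\bigr\rangle_{L^2(\partial\Omega)}.
\]

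The last step is to shift the self-adjoint operators $\Lambda_{\mu_0,\varepsilon_0}^{-1}$, $\Lambda_{\mu_0,\varepsilon_0}^e$ and $\Lambda_{\mu,\varepsilon}$ (all proven self-adjoint earlier in the paper) across the pairing. Moving $(\Lambda_{\mu_0,\varepsilon_0} - \Lambda_{\mu_0,\varepsilon_0}^e)\Lambda_{\mu_0,\varepsilon_0}^{-1}$ onto the left slot and applying (\ref{recall1}) turns $(u_0)_n$ into $(\Lambda_{\mu,\varepsilon} - \Lambda_{\mu_0,\varepsilon_0}^e)\psi_n$; then shifting $(\Lambda_{\mu,\varepsilon} - \Lambda_{\mu_0,\varepsilon_0}^e)$ back to the right slot and reusing the common identity converts the right slot into $(\Lambda_{\mu,\varepsilon} - \Lambda_{\mu^\delta,\varepsilon^\delta})\psi_m^\delta$. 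This yields exactly (\ref{B_bilinear_useful}).

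The main technical point to monitor is a conjugation issue: (\ref{B_bilinear_useful}) is written with $\overline{\psi_n}$ in the integrand, i.e., the complex $L^2$ pairing, whereas the self-adjointness of the NtD maps is proved with respect to the real $H^{-1/2}$--$H^{1/2}$ duality. Since the NtD operators have real integral kernels they commute with complex conjugation, so the two notions of self-adjointness coincide; verifying this compatibility is what legitimizes the operator transpositions in the final step, and is really the only subtlety beyond algebraic bookkeeping.
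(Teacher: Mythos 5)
Your algebraic core is correct and follows the paper's own strategy: exactly as in the paper, you start from (\ref{wmn_bilinear}), compute $({\cal A}_{\mu^\delta,\varepsilon^\delta}-{\cal A}_{\mu,\varepsilon})(u_0)_m$, and finish by transposing factors across the $L^2$ pairing and invoking self-adjointness. Your middle step is a mild streamlining: the paper reaches the operator difference through the resolvent identity (\ref{operator_identity}) and the chain (\ref{identity_00})--(\ref{identity_01}), whereas you extract the same cancellation directly from the common relation $(\Lambda_{\mu,\varepsilon}-\Lambda^{e}_{\mu_0,\varepsilon_0})\psi_m=(\Lambda_{\mu_0,\varepsilon_0}-\Lambda^{e}_{\mu_0,\varepsilon_0})\Lambda_{\mu_0,\varepsilon_0}^{-1}(u_0)_m=(\Lambda_{\mu^\delta,\varepsilon^\delta}-\Lambda^{e}_{\mu_0,\varepsilon_0})\psi^\delta_m$; the two computations are equivalent bookkeeping, and yours is easier to verify.

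The gap sits precisely in the point you flagged as ``the only subtlety,'' and your resolution of it is wrong. The interior maps $\Lambda_{\mu,\varepsilon}$, $\Lambda_{\mu^\delta,\varepsilon^\delta}$, $\Lambda_{\mu_0,\varepsilon_0}$ do have real kernels (real coefficients and real Neumann data produce real solutions), but the exterior map $\Lambda^{e}_{\mu_0,\varepsilon_0}$ does not: it is defined through the outgoing Sommerfeld radiation condition in (\ref{exterior_pde}), its kernel is built from outgoing solutions of the type $-\frac{i}{4}H^{(1)}_0(k_0|\cdot|)$, and complex conjugation converts outgoing into incoming waves, so $\overline{\Lambda^{e}_{\mu_0,\varepsilon_0}[g]}\neq \Lambda^{e}_{\mu_0,\varepsilon_0}[\overline{g}]$. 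Concretely, $\Lambda^{e}_{\mu_0,\varepsilon_0}$ is symmetric only for the bilinear pairing $\int_{\partial\Omega} pq\,d\sigma$ and is not Hermitian: $\mathrm{Im}\int_{\partial\Omega}\overline{g}\,\Lambda^{e}_{\mu_0,\varepsilon_0}[g]\,d\sigma$ is, up to a constant factor, the energy radiated to infinity and is generically nonzero. Thus the two notions of self-adjointness fail to coincide exactly in the factor $\left(\Lambda_{\mu,\varepsilon}-\Lambda^{e}_{\mu_0,\varepsilon_0}\right)^{-1}\left(\Lambda_{\mu_0,\varepsilon_0}-\Lambda^{e}_{\mu_0,\varepsilon_0}\right)$ that must cross the pairing. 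If you instead transpose rigorously with the bilinear pairing (legitimate, since all four operators are bilinear-symmetric), the left slot becomes the density produced by (\ref{recall1}) from the incident field $\overline{(u_0)_n}=(-1)^n(u_0)_{-n}$, i.e. $(-1)^n\psi_{-n}$, which agrees with $\overline{\psi_n}$ only modulo scattered-field contributions that vanish when the base medium equals the background --- which is why the linearized formulas of section \ref{sec7} are unaffected. To be fair, the paper's own proof has the same soft spot (it silently transposes in the complex inner product after asserting self-adjointness of $\Lambda^{e}_{\mu_0,\varepsilon_0}$ alongside the interior maps), so your derivation is on par with the paper's; but your explicit ``real integral kernels'' justification is false for $\Lambda^{e}_{\mu_0,\varepsilon_0}$ and cannot be used to close the step as stated.
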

\begin{proof}
Using the identity (\ref{operator_identity}) we can write
\beqn
(\Lambda_{\mu^\delta,\varepsilon^\delta}-\Lambda^{e}_{\mu_0,\varepsilon_0})^{-1} - (\Lambda_{\mu,\varepsilon}-\Lambda^{e}_{\mu_0,\varepsilon_0})^{-1} = (\Lambda_{\mu,\varepsilon}-\Lambda^{e}_{\mu_0,\varepsilon_0})^{-1} (\Lambda_{\mu,\varepsilon}-\Lambda_{\mu^\delta,\varepsilon^\delta}) (\Lambda_{\mu^\delta,\varepsilon^\delta}-\Lambda^{e}_{\mu_0,\varepsilon_0})^{-1} \,,
\label{identity_00}
\eqn
which enables us to derive
\beqn
&&\left(\Lambda_{\mu_0,\varepsilon_0} - \Lambda_{\mu^\delta,\varepsilon^\delta}\right)
\left(\Lambda_{\mu^\delta,\varepsilon^\delta} - \Lambda^{e}_{\mu_0,\varepsilon_0}\right)^{-1}
-
\left(\Lambda_{\mu_0,\varepsilon_0} - \Lambda_{\mu,\varepsilon}\right)
\left(\Lambda_{\mu,\varepsilon} - \Lambda^{e}_{\mu_0,\varepsilon_0}\right)^{-1} \notag \\
&=&
\left(\Lambda_{\mu,\varepsilon} - \Lambda_{\mu^\delta,\varepsilon^\delta}\right)
\left(\Lambda_{\mu^\delta,\varepsilon^\delta} - \Lambda^{e}_{\mu_0,\varepsilon_0}\right)^{-1}
+
\left(\Lambda_{\mu_0,\varepsilon_0} - \Lambda_{\mu,\varepsilon}\right)
\left[ \left(\Lambda_{\mu^\delta,\varepsilon^\delta} - \Lambda^{e}_{\mu_0,\varepsilon_0}\right)^{-1}
-\left(\Lambda_{\mu,\varepsilon} - \Lambda^{e}_{\mu_0,\varepsilon_0}\right)^{-1} \right] \notag \\
&=&
\left [ I +
\left(\Lambda_{\mu_0,\varepsilon_0} - \Lambda_{\mu,\varepsilon}\right)
(\Lambda_{\mu,\varepsilon}-\Lambda^{e}_{\mu_0,\varepsilon_0})^{-1} \right]
(\Lambda_{\mu,\varepsilon}-\Lambda_{\mu^\delta,\varepsilon^\delta}) (\Lambda_{\mu^\delta,\varepsilon^\delta}-\Lambda^{e}_{\mu_0,\varepsilon_0})^{-1} \notag \\
&=&
\left(\Lambda_{\mu_0,\varepsilon_0}-\Lambda^{e}_{\mu_0,\varepsilon_0}\right)
(\Lambda_{\mu,\varepsilon}-\Lambda^{e}_{\mu_0,\varepsilon_0})^{-1}
(\Lambda_{\mu,\varepsilon}-\Lambda_{\mu^\delta,\varepsilon^\delta}) (\Lambda_{\mu^\delta,\varepsilon^\delta}-\Lambda^{e}_{\mu_0,\varepsilon_0})^{-1} \, .
\label{identity_01}
\eqn
It follows directly from (\ref{identity_01}) and definition (\ref{long_operator}) for the operators ${\cal A}_{\mu,\varepsilon}$ and
${\cal A}_{\mu^\delta,\varepsilon^\delta}$ that
{\footnotesize
\beqnx
&&{\cal A}_{\mu^\delta,\varepsilon^\delta} - {\cal A}_{\mu,\varepsilon} \\
&=&\mu_0 \Lambda_{\mu_0,\varepsilon_0}^{-1}
\left\{
\left(\Lambda_{\mu_0,\varepsilon_0} - \Lambda_{\mu^\delta,\varepsilon^\delta}\right)
\left(\Lambda_{\mu^\delta,\varepsilon^\delta} - \Lambda^{e}_{\mu_0,\varepsilon_0}\right)^{-1}
-
\left(\Lambda_{\mu_0,\varepsilon_0} - \Lambda_{\mu,\varepsilon}\right)
\left(\Lambda_{\mu,\varepsilon} - \Lambda^{e}_{\mu_0,\varepsilon_0}\right)^{-1}
\right\}
\left(\Lambda_{\mu_0,\varepsilon_0} - \Lambda^{e}_{\mu_0,\varepsilon_0}\right)
\Lambda_{\mu_0,\varepsilon_0}^{-1} \\
&=&\mu_0 \Lambda_{\mu_0,\varepsilon_0}^{-1}
\left(\Lambda_{\mu_0,\varepsilon_0}-\Lambda^{e}_{\mu_0,\varepsilon_0}\right)
(\Lambda_{\mu,\varepsilon}-\Lambda^{e}_{\mu_0,\varepsilon_0})^{-1}
(\Lambda_{\mu,\varepsilon}-\Lambda_{\mu^\delta,\varepsilon^\delta}) (\Lambda_{\mu^\delta,\varepsilon^\delta}-\Lambda^{e}_{\mu_0,\varepsilon_0})^{-1}
\left(\Lambda_{\mu_0,\varepsilon_0} - \Lambda^{e}_{\mu_0,\varepsilon_0}\right)
\Lambda_{\mu_0,\varepsilon_0}^{-1} \, .
\eqnx
}Now identity (\ref{B_bilinear_useful}) is a consequence of the above relation and
the representation (\ref{wmn_bilinear}) for $W_{nm}$ and $W_{nm}^{\delta}$,
\[
    W_{nm}^{\delta} - W_{nm} = \langle (u_0)_n, \left({\cal A}_{\mu^\delta,\varepsilon^\delta} - {\cal A}_{\mu,\varepsilon} \right) (u_0)_m \rangle_{L^2(\partial \Omega)} = \langle \psi_n, \left(\Lambda_{\mu,\varepsilon} - \Lambda_{\mu^\delta,\varepsilon^\delta}\right)
    [\psi_m^\delta] \rangle_{L^2(\partial \Omega)} \, ,
\]
where $\langle , \rangle$ denotes the complex inner product on $L^2(\partial \Omega)$.
\end{proof}

The following identity will be useful for the subsequent analysis.
\begin{Lemma}
\label{quadratic}
For the solutions
$u_i$ ($i = 1,2$) to the two systems
\beqn
    \nabla \cdot \left( \f{1}{\mu_i} \nabla u_i \right)+ \omega^2 \varepsilon_i u_i = 0  \text{ in } \Omega \,;  \quad
    \f{1}{\mu_i} \f{\partial u_i}{\partial \nu} = g  \text{ on } \partial \Omega\,,
    \label{systems_purb}
\eqn
the following identity holds
\beqn
    & & \int_{\partial \Omega} \overline{g}
    \left(\Lambda_{\mu_2,\varepsilon_2} - \Lambda_{\mu_1,\varepsilon_1}\right)
    [g] d \sigma \notag \\
    & = & \f{1}{2} \int_{\Omega} \left( \f{1}{\mu_1} - \f{1}{\mu_2} \right) \left( -| \nabla (u_1 - u_2) |^2 + | \nabla u_1 |^2 + | \nabla u_2 |^2\right) \notag \\
    &   & - \f{1}{2} \omega^2 \int_{\Omega} \left( \varepsilon_1 - \varepsilon_2 \right) \left( -| u_1 - u_2 |^2 + | u_1 |^2 + | u_2 |^2\right) dx \, .
    \label{quadraticpolarization}
\eqn
\end{Lemma}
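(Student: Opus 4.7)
My plan is to reduce both sides to a common ``cross-term'' volume integral, using the complex polarization identity $|a|^2 + |b|^2 - |a-b|^2 = 2\,\mathrm{Re}(\overline{a}b)$ on the right-hand side and two Green's identities on the left. Applied to the scalars $u_1, u_2$ and to the vectors $\nabla u_1, \nabla u_2$, the polarization identity immediately rewrites the right-hand side of (\ref{quadraticpolarization}) as
\[
\mathrm{Re}\int_{\Omega}\left[\Big(\f{1}{\mu_1} - \f{1}{\mu_2}\Big)\,\overline{\nabla u_1}\cdot \nabla u_2 \,-\, \omega^2(\varepsilon_1 - \varepsilon_2)\,\overline{u_1}\,u_2\right] dx.
\]

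To produce this same volume integral from the boundary expression on the left, I will apply Green's identity twice. Since $\mu_1$ and $\varepsilon_1$ are real, $\overline{u_1}$ satisfies the same interior PDE (\ref{systems_purb}) as $u_1$ with Neumann data $\overline{g}$. Testing the $\overline{u_1}$-equation against $u_2$ yields
\[
\int_{\Omega} \f{1}{\mu_1}\,\overline{\nabla u_1}\cdot \nabla u_2 \, dx \,-\, \omega^2 \int_{\Omega} \varepsilon_1 \overline{u_1}\,u_2 \, dx \;=\; \int_{\partial\Omega} \overline{g}\,u_2 \, d\sigma,
\]
and testing the $u_2$-equation against $\overline{u_1}$ yields
\[
\int_{\Omega} \f{1}{\mu_2}\,\overline{\nabla u_1}\cdot \nabla u_2 \, dx \,-\, \omega^2 \int_{\Omega} \varepsilon_2 \overline{u_1}\,u_2 \, dx \;=\; \int_{\partial\Omega} g\,\overline{u_1} \, d\sigma.
\]
Subtracting these two identities, the targeted volume integral equals $\int_{\partial\Omega}(\overline{g}u_2 - g\overline{u_1})\,d\sigma$; taking real parts and using $\mathrm{Re}(g\overline{u_1}) = \mathrm{Re}(\overline{g}u_1)$ collapses this to $\mathrm{Re}\int_{\partial\Omega}\overline{g}(u_2 - u_1)\,d\sigma$.

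To close the argument, I will observe that $\int_{\partial\Omega}\overline{g}(u_2 - u_1)\,d\sigma$ is in fact real, so it coincides with its own real part. Indeed, testing each $u_i$-equation against $\overline{u_i}$ gives the real-valued energy identity $\int_{\partial\Omega}\overline{g}\,u_i\,d\sigma = \int_{\Omega}\f{1}{\mu_i}|\nabla u_i|^2\,dx - \omega^2 \int_{\Omega}\varepsilon_i |u_i|^2\,dx$, so the difference of the two is real. Combined with the two preceding steps, this closes the loop and proves (\ref{quadraticpolarization}). The only real subtlety, and the one thing to be careful about, is the systematic bookkeeping of complex conjugates: the reality of $\mu_i$ and $\varepsilon_i$ is what allows $\overline{u_i}$ to satisfy the conjugate PDE, and the symmetric pairing of $u_i$ with $\overline{u_j}$ in the two test steps is what makes the two volume terms combine into exactly the polarization form of the right-hand side.
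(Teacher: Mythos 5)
Your proof is correct, and it assembles the same basic ingredients as the paper in a different order. The paper's proof works with four Green-type identities (its (101)--(104)) to first establish the one-sided identity (201), namely $\int_{\Omega} \f{1}{\mu_1}|\nabla(u_2-u_1)|^2\,dx - \omega^2\int_{\Omega}\varepsilon_1|u_2-u_1|^2\,dx + \int_{\Omega}(\f{1}{\mu_2}-\f{1}{\mu_1})|\nabla u_2|^2\,dx - \omega^2\int_{\Omega}(\varepsilon_2-\varepsilon_1)|u_2|^2\,dx = \int_{\partial\Omega}\overline{g}(\Lambda_{\mu_1,\varepsilon_1}-\Lambda_{\mu_2,\varepsilon_2})[g]\,d\sigma$, then swaps $u_1 \leftrightarrow u_2$ and subtracts; the polarization structure of (\ref{quadraticpolarization}) emerges automatically from this symmetrization, and the reality of the boundary pairing is manifest since (201) and its swapped version have real left-hand sides. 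You instead polarize first, applying $|a|^2+|b|^2-|a-b|^2 = 2\,\mathrm{Re}(\overline{a}b)$ to the target right-hand side, which reduces everything to the cross-term volume integral; you then need only the two cross Green identities (your first one is exactly the paper's (102)) and a separate reality argument at the end, which you correctly supply via the energy identities (the paper's (101) and (103)). Your route is somewhat leaner -- it bypasses the expansion of the quadratic $|u_1-u_2|^2$ terms entirely and never needs the auxiliary identity (104), which in the paper only appears with both signs and cancels -- at the modest cost of having to make the reality of $\int_{\partial\Omega}\overline{g}(\Lambda_{\mu_2,\varepsilon_2}-\Lambda_{\mu_1,\varepsilon_1})[g]\,d\sigma$ an explicit step rather than a byproduct; the paper's route, in exchange, produces the one-sided identities (201)--(202), which carry more information (definite-sign difference terms) than the symmetrized conclusion alone. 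Your closing remark is also the right one: the whole argument hinges on $\mu_i,\varepsilon_i$ being real and positive, so that $\overline{u_i}$ solves the same equation with data $\overline{g}$.
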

\begin{proof}
It follows easily from (\ref{systems_purb}) and integration by parts that
\beqn
\int_{\partial \Omega} \overline{g} (\Lambda_{\mu_1,\varepsilon_1})[g] \, d \sigma &=& \int_{\Omega} \left( \f{1}{\mu_1} |\nabla u_1|^2 - \omega^2 \varepsilon_1 |u_1|^2 \right) dx \, , \label{101}\\
\int_{\partial \Omega} \overline{g} (\Lambda_{\mu_2,\varepsilon_2})[g] \, d \sigma &=& \int_{\Omega} \left( \f{1}{\mu_1} \overline{\nabla u_1} \cdot \nabla u_2  - \omega^2 \varepsilon_1 \overline{u_1} u_2 \right) dx \, , \label{102}\\
\int_{\partial \Omega} \overline{g} (\Lambda_{\mu_2,\varepsilon_2})[g] \, d \sigma &=& \int_{\Omega} \left( \f{1}{\mu_2} |\nabla u_2|^2 - \omega^2 \varepsilon_2 |u_2|^2 \right) dx \, , \label{103}\\
\int_{\partial \Omega} \f{1}{\mu_1} \overline{\f{\partial u_2}{\partial \nu}} u_2 \, d \sigma &=& \int_{\Omega} \left( \f{1}{\mu_1} |\nabla u_2|^2 - \omega^2 \varepsilon_1 |u_2|^2 \right) dx \, . \label{104}
\eqn
Combining (\ref{101})-(\ref{104}) yields
\beqnx
    & &
    \int_{\Omega}  \f{1}{\mu_1}  | \nabla(u_2 - u_1) |^2 dx
    -  \omega^2 \int_{\Omega} \varepsilon_1  | u_2 - u_1 |^2 dx
    +  \int_{\Omega} \left( \f{1}{\mu_2} - \f{1}{\mu_1} \right) | \nabla u_2 |^2 dx
    -  \omega^2 \int_{\Omega} \left( \varepsilon_2 - \varepsilon_1 \right)  | u_2 |^2 dx \\
    &=&
    \int_{\partial \Omega} \overline{g} (\Lambda_{\mu_1,\varepsilon_1})[g] \, d \sigma
    - 2 \int_{\partial \Omega} \overline{g} (\Lambda_{\mu_2,\varepsilon_2})[g] \, d \sigma
    + \int_{\partial \Omega} \f{1}{\mu_1} \overline{\f{\partial u_2}{\partial \nu}} u_2 \, d \sigma
    + \int_{\partial \Omega} \overline{g} (\Lambda_{\mu_2,\varepsilon_2})[g] \, d \sigma
    - \int_{\partial \Omega} \f{1}{\mu_1} \overline{\f{\partial u_2}{\partial \nu}} u_2 \, d \sigma  \\
    &=& \int_{\partial \Omega} \overline{g}
    \left(\Lambda_{\mu_1,\varepsilon_1} - \Lambda_{\mu_2,\varepsilon_2}\right)
    [g] \, d \sigma\,,
\eqnx
which gives the identity
\beqn
    & &
    \int_{\Omega}  \f{1}{\mu_1}  | \nabla(u_2 - u_1) |^2 dx
    -  \omega^2 \int_{\Omega} \varepsilon_1  | u_2 - u_1 |^2 dx
    +  \int_{\Omega} \left( \f{1}{\mu_2} - \f{1}{\mu_1} \right) | \nabla u_2 |^2 dx
    -  \omega^2 \int_{\Omega} \left( \varepsilon_2 - \varepsilon_1 \right)  | u_2 |^2 dx \notag\\
    &=& \int_{\partial \Omega} \overline{g}
    \left(\Lambda_{\mu_1,\varepsilon_1} - \Lambda_{\mu_2,\varepsilon_2}\right)
    [g] \, d \sigma \, .
    \label{201}
\eqn
Swapping $u_1$ and $u_2$ in the above identity implies
\beqn
    & &
    \int_{\Omega}  \f{1}{\mu_2}  | \nabla(u_1 - u_2) |^2 dx
    -  \omega^2 \int_{\Omega} \varepsilon_2  | u_1 - u_2 |^2 dx
    +  \int_{\Omega} \left( \f{1}{\mu_1} - \f{1}{\mu_2} \right) | \nabla u_1 |^2 dx
    -  \omega^2 \int_{\Omega} \left( \varepsilon_1 - \varepsilon_2 \right)  | u_1 |^2 dx \notag\\
    &=& \int_{\partial \Omega} \overline{g}
    \left(\Lambda_{\mu_2,\varepsilon_2} - \Lambda_{\mu_1,\varepsilon_1}\right)
    [g] \, d \sigma \, .
    \label{202}
\eqn
Now (\ref{quadraticpolarization}) follows by subtracting (\ref{201}) from (\ref{202}).
\end{proof}

By the same arguments as those in \cite{abboud} (see also \cite{stability}), we can derive
the following estimate.
\begin{Lemma}
The difference between the interior NtD maps $\Lambda_{\mu,\varepsilon}$ and $\Lambda_{\mu^\delta,\varepsilon^\delta}$
can be represented in terms of
the differences between two sets of electromagnetic parameters $(\mu,\varepsilon)$ and  $(\mu^\delta,\varepsilon^\delta)$:
\beqn
||\Lambda_{\mu^\delta,\varepsilon^\delta} - \Lambda_{\mu,\varepsilon} || \leq C \bigg( ||\varepsilon^\delta - \varepsilon||_{L^\infty(\Omega)} + \bigg|\bigg| \f{1}{\mu^{\delta}} - \f{1}{\mu} \bigg|\bigg|_{L^\infty(\Omega)} \bigg) \, .
\label{ineq1}
\eqn
\end{Lemma}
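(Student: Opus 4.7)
The plan is to use Lemma \ref{quadratic} to reduce the operator-norm estimate to an $L^\infty$ control of the coefficient differences multiplied by $H^1$-bounds on the interior solutions, and then to upgrade the resulting quadratic-form bound to an operator-norm bound via polarization.

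First, I would fix an arbitrary $g \in H^{-\f{1}{2}}(\p \Omega)$ and denote by $u = \mathcal{N}_{\mu,\varepsilon}[g]$ and $u^\delta = \mathcal{N}_{\mu^\delta,\varepsilon^\delta}[g]$ the two solutions of (\ref{interior_pde}) corresponding to the same Neumann datum $g$ and to the respective coefficients $(\mu,\varepsilon)$ and $(\mu^\delta,\varepsilon^\delta)$. Standard well-posedness for (\ref{interior_pde}), combined with the remark preceding this lemma (the collective-compactness argument of \cite{anselone, book} ensuring that $\Lambda_{\mu^\delta,\varepsilon^\delta}$ is well defined and uniformly invertible once $\widehat{\varepsilon}$ is small), furnishes the uniform estimate
\[
\|u\|_{H^1(\Omega)} + \|u^\delta\|_{H^1(\Omega)} \leq C \,\|g\|_{H^{-\f{1}{2}}(\p \Omega)},
\]
with $C$ independent of $\delta$.

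Second, I would apply Lemma \ref{quadratic} with $(\mu_1,\varepsilon_1) := (\mu,\varepsilon)$, $(\mu_2,\varepsilon_2) := (\mu^\delta,\varepsilon^\delta)$, $u_1 := u$, $u_2 := u^\delta$, to rewrite the pairing
\[
\langle g, (\Lambda_{\mu^\delta,\varepsilon^\delta} - \Lambda_{\mu,\varepsilon})[g] \rangle_{H^{-\f12},H^{\f12}}
\]
as a volume integral over $\Omega$ whose integrands are quadratic expressions in $u$, $u^\delta$ and $u-u^\delta$ weighted against the differences $\f{1}{\mu^\delta} - \f{1}{\mu}$ and $\varepsilon^\delta - \varepsilon$. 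Pulling these coefficient differences out in $L^\infty(\Omega)$ and bounding the remaining quadratic terms by $\|u\|_{H^1}^2 + \|u^\delta\|_{H^1}^2$, the step-one estimate yields
\[
\bigl|\langle g, (\Lambda_{\mu^\delta,\varepsilon^\delta} - \Lambda_{\mu,\varepsilon})[g]\rangle\bigr| \leq C\,\widehat{\varepsilon}\,\|g\|_{H^{-\f{1}{2}}(\p \Omega)}^2.
\]

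Third, since $\Lambda_{\mu,\varepsilon}$ and $\Lambda_{\mu^\delta,\varepsilon^\delta}$ are each self-adjoint with respect to the duality pairing $\langle \cdot,\cdot \rangle_{H^{-\f12},H^{\f12}}$, their difference $T := \Lambda_{\mu^\delta,\varepsilon^\delta} - \Lambda_{\mu,\varepsilon}$ defines a Hermitian sesquilinear form on $H^{-\f12}(\p \Omega)$. A standard polarization identity therefore upgrades the quadratic bound above to the bilinear bound
\[
|\langle g_1, T[g_2] \rangle| \leq C\,\widehat{\varepsilon}\,\|g_1\|_{H^{-\f12}(\p \Omega)} \|g_2\|_{H^{-\f12}(\p \Omega)},
\]
and taking the supremum over $g_1$ of unit $H^{-\f12}$-norm with the identification $H^{\f12}(\p\Omega) = (H^{-\f12}(\p\Omega))^*$ yields (\ref{ineq1}).

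The main obstacle is securing the uniform $H^1$-bound on $u^\delta$, that is, the uniform well-posedness of the interior problem as the coefficients are perturbed. This is exactly the point addressed by the collective-compactness argument cited just before the statement, so provided $\widehat{\varepsilon}$ is taken small enough the constant $C$ above can be chosen independent of $\delta$, after which every other step is a direct consequence of Lemma \ref{quadratic} and elementary functional-analytic manipulations.
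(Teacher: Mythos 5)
Your proposal is correct, but it takes a genuinely different route from the paper. The paper gives no in-text proof of (\ref{ineq1}) at all: it simply invokes ``the same arguments as those in \cite{abboud} (see also \cite{stability})'', which amount to a direct perturbation estimate --- subtract the two Neumann problems, observe that $w := u^\delta - u$ satisfies $\nabla \cdot \f{1}{\mu^\delta} \nabla w + \omega^2 \varepsilon^\delta w = -\nabla \cdot \left( \f{1}{\mu^\delta} - \f{1}{\mu} \right) \nabla u - \omega^2 (\varepsilon^\delta - \varepsilon) u$ with homogeneous Neumann data, and bound $\|w\|_{H^1(\Omega)}$, hence its $H^{\f{1}{2}}(\p\Omega)$ trace, by $C \widehat{\varepsilon}\, \|g\|_{H^{-\f{1}{2}}(\p\Omega)}$. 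You instead combine the quadratic identity of Lemma \ref{quadratic} with Hermitian polarization, and this is legitimate: since $\mu, \varepsilon, \mu^\delta, \varepsilon^\delta$ are real, the right-hand side of (\ref{quadraticpolarization}) is real for every $g$, which confirms that the difference $T = \Lambda_{\mu^\delta,\varepsilon^\delta} - \Lambda_{\mu,\varepsilon}$ generates a Hermitian sesquilinear form, so the quadratic bound does upgrade to the bilinear one (indeed, for a Hermitian form the numerical radius already equals the norm, so the polarization identity is even dispensable). It is worth noting that your polarization step mirrors exactly the paper's own later splitting of the bilinear form $B$ in (\ref{def_B_bilinear}) into the terms (I) and (II) in the proof of Theorem \ref{perturbation}, so your argument is very much in the spirit of that section even though the paper's citation points to a different mechanism. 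As for what each approach buys: the direct route of \cite{abboud} is shorter, does not need the Hermitian structure, and works verbatim for complex or lossy coefficients; your route is self-contained within the paper's toolkit, but hinges on two extra ingredients you correctly flag --- the reality of the coefficients, and a uniform-in-$\delta$ bound $\|u^\delta\|_{H^1(\Omega)} \leq C \|g\|_{H^{-\f{1}{2}}(\p\Omega)}$, which for $\widehat{\varepsilon}$ small follows from the collectively-compact-operator (or Neumann series) perturbation argument the paper records just before the lemma. With those two points in place, every step of your proof goes through.
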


\ms
Now we can further our analysis on the difference $W_{nm}^{\delta} - W_{nm}$ in terms
of $\varepsilon^\delta - \varepsilon$ and ${1}/{\mu^{\delta}} - {1}/{\mu}$ using (\ref{B_bilinear_useful}) and (\ref{ineq1}).
Recalling $\psi_n$ and $\psi_m^\delta$ from (\ref{recall1}) and (\ref{recall2}),
we can define the solutions $u_m, u_m^\gamma, u_n^\delta$ and $u_n^{\delta \gamma}$ to the following
four systems:
\beqn
    &&\nabla \cdot \f{1}{\mu} \nabla u_m + \varepsilon\omega^2 u_m = 0  \text{ in } \Omega \,; \quad
    \f{1}{\mu} \f{\p}{\p \nu} u_m = \psi_m  \text{ on } \partial \Omega \,;
    \\
    &&\nabla \cdot \f{1}{\mu^\delta} \nabla u_m^\gamma + \varepsilon^\delta \omega^2 u_m^\gamma = 0   \text{ in } \Omega \,; \quad
    \f{1}{\mu^\delta} \f{\p}{\p \nu} u_m^\gamma = \psi_m  \text{ on } \partial \Omega \,;
   \\
    &&\nabla \cdot \f{1}{\mu} \nabla u_n^{\delta \gamma} + \varepsilon\omega^2 u_n^{\delta \gamma}= 0   \text{ in } \Omega \,; \quad
    \f{1}{\mu} \f{\p}{\p \nu} u_n^{\delta \gamma} = \psi^\delta_n \text{ on } \partial \Omega \,;
   \\
    &&\nabla \cdot \f{1}{\mu^\delta} \nabla u_n^\delta + \varepsilon^\delta \omega^2 u_n^\delta= 0   \text{ in } \Omega \,; \quad
    \f{1}{\mu^\delta} \f{\p}{\p \nu} u_n^\delta = \psi^\delta_n \text{ on } \partial \Omega \,.
\eqn
Noting from (\ref{total_u}) that $\psi_n$ and $\psi_m^\delta$ are the density functions in the Neumann potential along $\partial \Omega$ with coefficients $(\mu,\varepsilon)$ and $(\mu^\delta,\varepsilon^\delta)$ respectively,
the solutions $u_m$ and $u_n^\delta$ solve (\ref{scattering1})-(\ref{sommerfield}) with coefficients $(\mu,\varepsilon)$ and $(\mu^\delta,\varepsilon^\delta)$ and the incident field $(u_0)_m$ and $(u_0)_n$ defined as in (\ref{incidence}).
For convenience, we introduce a bilinear form:
\beqn
    B(p , q) := \int_{\partial \Omega} \overline{p} \left(\Lambda_{\mu,\varepsilon} - \Lambda_{\mu^\delta,\varepsilon^\delta}\right) [q] d \sigma  \quad  \forall\,p, q \in H^{-\f{1}{2}}(\p \Om)\,.
    \label{def_B_bilinear}
\eqn
Then (\ref{quadraticpolarization}) gives us an explicit expression of $B(g ,g)$ for $g \in H^{-\f{1}{2}}(\p \Om)$.
By (\ref{B_bilinear_useful}),
the difference $W_{nm}^\delta - W_{nm}$ can be split using the bilinear form $B$ as
\beqn
W_{nm}^\delta - W_{nm}
&=& \mu_0 B(\psi_m , \psi^\delta_n) \notag \\
&=&
\f{\mu_0}{2} \left[
B(\psi_m +  \psi^\delta_n, \psi_m +  \psi^\delta_n)
- B(\psi_m , \psi_m )
- B(\psi^\delta_n,  \psi^\delta_n)
\right] \notag \\
&& +
\f{i \mu_0}{2} \left[
B(\psi_m + i \psi^\delta_n, \psi_m + i \psi^\delta_n)
- B(\psi_m , \psi_m )
- B(\psi^\delta_n,  \psi^\delta_n)
\right] \notag \\
&:=& \f{\mu_0}{2} \I + \f{i \mu_0}{2} \II\, ,
\label{hahaintegral}
\eqn
where $\I$ and $\II$ are given by
\beqn
    \I  &:=& B(\psi_m +  \psi^\delta_n, \psi_m +  \psi^\delta_n) - B(\psi_m , \psi_m )- B(\psi^\delta_n,  \psi^\delta_n) \, , \\
    \II &:=& B(\psi_m + i \psi^\delta_n, \psi_m + i \psi^\delta_n) - B(\psi_m , \psi_m ) - B(\psi^\delta_n,  \psi^\delta_n) \, .
\eqn
By direct calculations, we get the following expression of the term $\I$:
\beqn
\I&=& B(\psi_m +  \psi^\delta_n, \psi_m +  \psi^\delta_n)
- B(\psi_m , \psi_m )
- B(\psi^\delta_n,  \psi^\delta_n) \notag \\
&=&
\f{1}{2} \int_{\Omega} \left(\f{1}{\mu^{\delta}} - \f{1}{\mu}\right)  \left( -| \nabla (u_m + u_n^{\delta \gamma} - u_m^\gamma - u_n^\delta) |^2 + | \nabla (u_m + u_n^{\delta \gamma}) |^2 + | \nabla (u_m^\gamma + u_n^\delta) |^2\right) dx \notag \\
&&
- \f{1}{2} \omega^2 \int_{\Omega} \left(\varepsilon^\delta - \varepsilon\right) \left( -| u_m + u_n^{\delta \gamma} - u_m^\gamma - u_n^\delta |^2 + | u_m + u_n^{\delta \gamma} |^2 + | u_m^\gamma + u_n^\delta |^2\right) dx
\notag \\
&&
- \left[
\f{1}{2} \int_{\Omega} \left(\f{1}{\mu^{\delta}} - \f{1}{\mu}\right)  \left( -| \nabla (u_m  - u_m^\gamma ) |^2 + | \nabla u_m  |^2 + | \nabla u_m^\gamma |^2\right) dx\right.\nb\\
&&{\,}\q\q -\left.\f{1}{2} \omega^2 \int_{\Omega} \left(\varepsilon^\delta - \varepsilon\right) \left( -| u_m  - u_2 |^2 + | u_m  |^2 + | u_m^\gamma  |^2\right) dx
\right]
\notag \\
&&
- \left[
\f{1}{2} \int_{\Omega} \left(\f{1}{\mu^{\delta}} - \f{1}{\mu}\right)  \left( -| \nabla (u_n^{\delta \gamma} - u_n^\delta) |^2 + | \nabla u_n^{\delta \gamma} |^2 + | \nabla u_n^\delta |^2\right) dx\right.\nb\\
&&{\,}\q\q -\left. \f{1}{2} \omega^2 \int_{\Omega} \left(\varepsilon^\delta - \varepsilon\right) \left( -| u_n^{\delta \gamma} - u_n^\delta |^2 + | u_n^{\delta \gamma} |^2 + | u_n^\delta |^2\right) dx
\right] \,. \notag
\label{hahareal0}
\eqn
From (\ref{ineq1}), we get
\beqn
||u_m  - u_m^\gamma  ||^2_{H^1(\Omega)} = O (\widehat{\varepsilon} ) \quad \text{ and }
||u_n^{\delta \gamma} - u_n^\delta  ||^2_{H^1(\Omega)} = O (\widehat{\varepsilon} ) \, ,
\label{haharealbound}
\eqn
where $\widehat{\varepsilon}$ is defined as in (\ref{orderepsilon}). 
Then using (\ref{haharealbound}), we further the estimate of the term $\I$ as follows:
{\small
\beqn
\I& = &
\int_{\Omega} \left(\f{1}{\mu^{\delta}} - \f{1}{\mu}\right)  \left( | \nabla (u_m + u_n^\delta ) |^2 - | \nabla u_m |^2 - | \nabla u_n  |^2\right) dx\nb\\
&&- \omega^2 \int_{\Omega} \left(\varepsilon^\delta - \varepsilon\right) \left( | u_m + u_n^\delta |^2 - | u_m |^2  - | u_n |^2\right) dx   + O (\widehat{\varepsilon}^2 ) \notag \\
& = & 2 \, \text{Re} \left[
 \int_{\Omega} \left(\f{1}{\mu^{\delta}} - \f{1}{\mu}\right) \overline{\nabla u_n^\delta} \cdot \nabla u_m \, dx
-  \omega^2 \int_{\Omega} \left(\varepsilon^\delta - \varepsilon\right) \overline{ u_n^\delta} u_m \, dx \right] + O (\widehat{\varepsilon}^2 )\,.
\label{hahareal}
\eqn
}
Similarly, we can derive the following estimate for the term $\II$:
\beqn
\II & = & B(\psi_m + i \psi^\delta_n, \psi_m + i \psi^\delta_n)
- B(\psi_m , \psi_m )
- B(\psi^\delta_n,  \psi^\delta_n) \notag \\
& = & 2 \, \text{Im} \left[
 \int_{\Omega} \left(\f{1}{\mu^{\delta}} - \f{1}{\mu}\right) \overline{\nabla u_n^\delta} \cdot \nabla u_m \, dx
-  \omega^2 \int_{\Omega} \left(\varepsilon^\delta - \varepsilon\right)  \overline{ u_n^\delta} u_m \, dx \right] + O (\widehat{\varepsilon}^2 ) \, .
\label{hahaimag}
\eqn
Substituting (\ref{hahareal}) and (\ref{hahaimag}) in (\ref{hahaintegral}) gives
\beqn
W_{nm}^\delta - W_{nm} &=&
\mu_0 \int_{\Omega} \left(\f{1}{\mu^{\delta}} - \f{1}{\mu}\right) \overline{\nabla u_n^\delta} \cdot \nabla u_m \, dx
- \mu_0 \omega^2 \int_{\Omega} \left(\varepsilon^\delta - \varepsilon\right)  \overline{ u_n^\delta} u_m \, dx + O (\widehat{\varepsilon}^2 ) \, .
\label{perturbation_final0}
\eqn
Furthermore, we have from (\ref{eq:ufromwnm}) that for all $m \in \mathbb{Z}$,
\beqn
    u_m - (u_0)_m = -\f{i}{4} \sum_{m} H^{(1)}_n (k_0 |x|) e^{i n \theta_x} W_{nm}\, , \q 
    u_m^\delta - (u_0)_m = -\f{i}{4} \sum_{m} H^{(1)}_n (k_0 |x|) e^{i n \theta_x} W_{nm}^\delta\, , \label{eq:ufromwnm_diff2}
\eqn
subtracting the first one from the second in (\ref{eq:ufromwnm_diff2}) gives
\beqn
    u_m^\delta = u_m  -\f{i}{4} \sum_{m} H^{(1)}_n (k_0 |x|) e^{i n \theta_x} \left( W_{nm}^\delta - W_{nm}\right)\,.
    \label{eq:ufromwnm_diff3}
\eqn
Now replacing $u_m^\delta$ in (\ref{perturbation_final0}) by (\ref{eq:ufromwnm_diff3}),
we arrive at the following theorem.

\begin{Theorem}
\label{perturbation}
Assume $(\mu,\varepsilon)$ and  $(\mu^\delta,\varepsilon^\delta)$ are two different sets of electromagnetic parameters,
and $W_{nm}$ and $W_{nm}^{\delta}$ are defined as in (\ref{perturbed}).  Let $\widehat{\varepsilon}$ be defined as in (\ref{orderepsilon}). 
For any $m \in \mathbb{Z}$, let $u_m$
be the solution to (\ref{scattering1})-(\ref{sommerfield}) with the coefficients $(\mu,\varepsilon)$ and
the incident field $(u_0)_m$ in (\ref{incidence}). Then the following estimate holds
for any $n, m \in \mathbb{Z}$:
\beqn
W_{nm}^\delta - W_{nm} = \mu_0 \int_{\Omega} \left(\f{1}{\mu^{\delta}} - \f{1}{\mu}\right) \overline{\nabla u_n } \cdot \nabla u_m
-  \omega^2 \mu_0 \int_{\Omega} \left(\varepsilon^\delta - \varepsilon\right)   \overline{ u_n } u_m  + O (\widehat{\varepsilon}^2 ) \, .
\eqn
\end{Theorem}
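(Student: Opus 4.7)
The essence of the argument is already laid out in the preceding discussion: formula (\ref{perturbation_final0}) gives an expression for $W_{nm}^\delta - W_{nm}$ modulo $O(\widehat{\varepsilon}^2)$ in terms of the perturbed solution $u_n^\delta$ and the unperturbed solution $u_m$. The remaining task is to replace $u_n^\delta$ by $u_n$ in that formula and show that the resulting error is itself of order $\widehat{\varepsilon}^2$, after which the theorem follows immediately.

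My plan is to invoke an $H^1$-stability estimate for the total field with respect to the coefficients. Since $u_n^\delta$ and $u_n$ are the total fields for the same incident wave $(u_0)_n$ but with coefficients $(\mu^\delta,\varepsilon^\delta)$ and $(\mu,\varepsilon)$, respectively, the well-posedness of the scattering problem (\ref{scattering2})-(\ref{sommerfield}), combined with the NtD estimate (\ref{ineq1}), yields
\[
    \| u_n^\delta - u_n \|_{H^1(\Omega)} \le C\, \widehat{\varepsilon}.
\]
Equivalently, one may derive this directly from (\ref{eq:ufromwnm_diff3}) applied with $n$ in place of $m$: the coefficients $W_{kn}^\delta - W_{kn}$ are individually $O(\widehat{\varepsilon})$ by (\ref{perturbation_final0}) together with Cauchy--Schwarz and the decay (\ref{decayhaha}) of $J_k$ and its derivatives, and the series converges uniformly on $\Omega$.

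With this stability in hand, I would split $u_n^\delta = u_n + (u_n^\delta - u_n)$ inside the two integrands of (\ref{perturbation_final0}). The resulting cross terms, namely
\[
    \mu_0 \int_\Omega \left(\f{1}{\mu^\delta}-\f{1}{\mu}\right) \overline{\nabla (u_n^\delta - u_n)} \cdot \nabla u_m \, dx \q \m{and} \q \omega^2 \mu_0 \int_\Omega (\varepsilon^\delta - \varepsilon) \, \overline{(u_n^\delta - u_n)}\, u_m \, dx,
\]
are estimated by Cauchy--Schwarz together with the $L^\infty$ bounds on the coefficient differences and the $H^1$-bound above; each is of order $\widehat{\varepsilon} \cdot \widehat{\varepsilon} = \widehat{\varepsilon}^2$, and thus is absorbed into the existing remainder. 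The main technical point I anticipate is the clean justification of the $H^1$-stability, which requires the preserved well-posedness of the perturbed problem for small $\widehat{\varepsilon}$; this is guaranteed by the collectively compact operator framework already invoked in the paper just after (\ref{orderepsilon}). Once this step is secured, substitution into (\ref{perturbation_final0}) and collection of terms yields exactly the claimed identity.
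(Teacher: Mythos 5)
Your proposal is correct, and up to (\ref{perturbation_final0}) it coincides with the paper's derivation by construction, since you take that formula (and hence the polarization identity of Lemma \ref{quadratic} and the NtD estimate (\ref{ineq1}) behind it) as your starting point; the genuine comparison is in the final step, the replacement of $u_n^\delta$ by $u_n$. The paper performs this replacement by substituting the multipole expansion (\ref{eq:ufromwnm_diff3}) into the integrals of (\ref{perturbation_final0}) and using $W_{kn}^\delta - W_{kn} = O(\widehat{\varepsilon})$. Your primary route --- the direct stability bound $\| u_n^\delta - u_n \|_{H^1(\Omega)} \leq C \widehat{\varepsilon}$ followed by Cauchy--Schwarz on the two cross terms --- is actually the more robust of the two: the expansion (\ref{eq:ufromwnm_diff3}) is an exterior representation of the scattered field in outgoing cylindrical waves $H^{(1)}_k(k_0|x|)e^{ik\theta_x}$, valid only outside a disk containing the scatterer, so inserting it into integrals over $\Omega$ (as the paper does) requires further justification; for the same reason your fallback ``equivalently'' route is the weak point of your write-up, since the claimed uniform convergence of that series on $\Omega$ does not hold (the Hankel terms are singular at the origin, and inside $\Omega$ the series simply does not represent $u_n^\delta - u_n$). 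Your stability bound itself is easily secured within the paper's own framework: on $\Omega$ the fields are $u_n^\delta = \mathcal{N}_{\mu^\delta,\varepsilon^\delta}[\psi_n^\delta]$ and $u_n = \mathcal{N}_{\mu,\varepsilon}[\psi_n]$, and $\psi_n^\delta - \psi_n = O(\widehat{\varepsilon})$ follows from (\ref{recall1})--(\ref{recall2}) together with the resolvent identity (\ref{identity_00}) and (\ref{ineq1}), so the interior solve (well-posed uniformly for small $\widehat{\varepsilon}$ by the collectively compact operator remark you invoke) gives the linear-in-$\widehat{\varepsilon}$ bound; note this is of the same flavor as, but sharper in exponent than, the paper's intermediate estimate (\ref{haharealbound}), whose square-root rate would still suffice here because the cross terms carry an extra factor of the coefficient difference. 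In short: same skeleton as the paper, with a cleaner --- and indeed preferable --- justification of the concluding substitution.
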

The above formula provides a sensitivity analysis in terms of
electromagnetic parameters $(\mu,\varepsilon)$ for arbitrary medium domains $\Om$.
In order to derive reconstruction formulas for $\mu$ and $\varepsilon$ from the scattering coefficients, we shall achieve more explicit and detailed sensitivity analysis and representation formulas
for scattering coefficients $W_{nm}$ when the medium domains are of some special geometry.
This is our focus in the next section.

\section{Explicit reconstruction formulas in the linearized case}  \label{sec7}

For a given $\widehat{\varepsilon} > 0$,
consider $\mu, \varepsilon$ such that $(|| \varepsilon - \varepsilon_0 ||_{L^\infty(\Omega)}^2 + \big|\big|{\mu}^{-1} - {\mu_0^{-1}}\big|\big|^2_{L^\infty(\Omega)})^{1/2}
= \widehat{\varepsilon}$.
Then it follows from (\ref{new_representation_Wnm}) and the definition of $(u_0)_m$ in (\ref{incidence}) that
\beqn
W_{nm}
& =&
 \mu_0 \int_{\Omega} \left( \f{1}{\mu(y)} - \f{1}{\mu_0} \right) \nabla ( J_n(k_0|y|) e^{- i n \theta_y} ) \cdot  \nabla ( J_m(k_0|y|) e^{i m \theta_y} ) dy \notag \\
& & - \omega^2 \mu_0  \int_{\Omega} \left( \varepsilon(y) - \varepsilon_0 \right)  J_n(k_0|y|) J_m(k_0|y|) e^{ i (m - n) \theta_y} dy + O (\widehat{\varepsilon}^2 ) \, . \l{eq:wnm}
\eqn

Now for all $n \neq 0$, we have by direct computing
\beqn
    \p_{x_1} (J_n(k r) e^{i n \theta}) &=& (\cos \theta \p_{r} - \f{\sin \theta}{r} \p_{\theta} ) (J_n(k r) e^{i n \theta}) \notag\\
    &=& \f{k}{2} \cos \theta   ( J_{n-1}(k r) - J_{n+1}(k r)) e^{i n \theta}
    - \f{i n \sin \theta}{r} J_n(k r) e^{i n \theta} \, ,\\
    \p_{x_2} (J_n(k r) e^{i n \theta}) &=& (\sin \theta \p_{r} + \f{\cos \theta}{r} \p_{\theta} ) (J_n(k r) e^{i n \theta}) \notag\\
    &=& \f{k}{2} \sin \theta   ( J_{n-1}(k r) - J_{n+1}(k r)) e^{i n \theta}
    + \f{i n \cos \theta}{r} J_n(k r) e^{i n \theta} \, ,
\eqn
which implies the explicit expression for the gradient term in (\ref{eq:wnm}) for $n ,m \neq 0$:
{\small
\beqn
&& \nabla ( J_n(k_0|y|) e^{- i n \theta_y} ) \cdot \nabla ( J_m(k_0|y|) e^{i m \theta_y} ) \notag \\
& = & \left[ \f{k_0^2}{4} \left( J_{n-1}(k_0 |y|) - J_{n+1}(k_0 |y|)\right) \left( J_{m-1}(k_0 |y|) - J_{m+1}(k_0 |y|)\right)
    + \f{n m}{|y|^2} J_n(k_0 |y|)J_m(k_0 |y|)  \right] e^{i (m-n) \theta_y} \, .
\eqn
}
For $n = 0$, we have $J_0' = - J_1$ and
\beqn
    \p_{x_1} (J_0(k r)) =  - k \cos \theta   ( J_1 (k r))
    \, , \q
    \p_{x_2} (J_0(k r)) =  - k \sin \theta   ( J_1 (k r))
    \,,
\eqn
which yields the following explicit expressions for the gradient term for $n = 0$ or $m =0$:
{\small
\beqn
\nabla ( J_0(k_0|y|) ) \cdot \nabla ( J_m(k_0|y|) e^{i m \theta_y} )
& = & \left[ - \f{k_0^2}{2} \left( J_{1}(k_0 |y|)\right) \left( J_{m-1}(k_0 |y|) - J_{m+1}(k_0 |y|)\right)
      \right] e^{i m \theta_y} \, , \\
\nabla ( J_n(k_0|y|) e^{-i n \theta_y} ) \cdot \nabla ( J_0(k_0|y|) )
& = & \left[ - \f{k_0^2}{2} \left( J_{n-1}(k_0 |y|) - J_{n+1}(k_0 |y|)\right) \left( J_{1}(k_0 |y|) \right)
      \right] e^{ -i n \theta_y} \, , \\
\nabla ( J_0(k_0|y|) ) \cdot \nabla ( J_0(k_0|y|) )
& = & k_0^2 \left( J_{1}(k_0 |y|) \right)^2
      \, .
\eqn
}
These explicit formulas lead us to the following corollary.
\begin{Corollary}\l{cor:wnm}
Let $(\mu,\varepsilon)$ be a pair of electromagnetic parameters in $\Omega$, and
$\widehat{\varepsilon}= (|| \varepsilon - \varepsilon_0 ||^2_{L^\infty} + \big|\big|{\mu}^{-1} - {\mu_0^{-1}}\big|\big|^2_{L^\infty})^{1/2}
$. Then the scattering coefficients $W_{nm}[ \, \varepsilon, \mu, \omega, \Omega \,]$
admit the following expansions:
\beqn
W_{nm}
& =&
\f{\mu_0 k_0^2}{4} \int_{\Omega} \left( \f{1}{\mu(y)} - \f{1}{\mu_0} \right)\left( J_{n-1}(k_0 |y|) - J_{n+1}(k_0 |y|)\right) \left( J_{m-1}(k_0 |y|) - J_{m+1}(k_0 |y|)\right)
 e^{i (m-n) \theta_y} \, dy \notag \\
& & + \mu_0 n m \int_{\Omega} \left( \f{1}{\mu(y)} - \f{1}{\mu_0} \right) \f{1}{|y|^2} J_n(k_0 |y|)J_m(k_0 |y|) e^{i (m-n) \theta_y} \, dy \notag \\
& & - \omega^2 \mu_0  \int_{\Omega} \left( \varepsilon(y) - \varepsilon_0 \right)  J_n(k_0|y|) J_m(k_0|y|) e^{ i (m - n) \theta_y} dy + O (\widehat{\varepsilon}^2 ) \q (\m{for} ~~n,m\ne 0)
\label{wnm_explicit} \\
W_{00}
& =&
\mu_0 k_0^2  \int_{\Omega} \left( \f{1}{\mu(y)} - \f{1}{\mu_0} \right)  \left( J_{1}(k_0 |y|) \right)^2 \, dy  - \omega^2 \mu_0  \int_{\Omega} \left( \varepsilon(y) - \varepsilon_0 \right)  \left( J_{0}(k_0 |y|) \right)^2 \, dy + O (\widehat{\varepsilon}^2 )
\label{wnm_explicit3}\\
W_{n0}
& =&- \f{\mu_0 k_0^2}{2} \int_{\Omega} \left( \f{1}{\mu(y)} - \f{1}{\mu_0} \right)   \left( J_{n-1}(k_0 |y|) - J_{n+1}(k_0 |y|)\right) \left( J_{1}(k_0 |y|) \right) e^{ -i n \theta_y} \, dy \notag \\
& & - \omega^2 \mu_0  \int_{\Omega} \left( \varepsilon(y) - \varepsilon_0 \right)  J_n(k_0|y|) J_0(k_0|y|) e^{ -i n \theta_y} \, dy + O (\widehat{\varepsilon}^2 ) \, \q (\m{for} ~~n\ne 0)\,.
\label{wnm_explicit2}
\eqn
\end{Corollary}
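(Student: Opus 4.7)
The strategy is to apply Theorem \ref{perturbation} in the special situation where one of the two parameter pairs is taken to be the homogeneous background $(\mu_0,\varepsilon_0)$. In that case $W_{nm}^\delta[\varepsilon_0,\mu_0,\omega,\Omega]=0$ because the background produces no scattering, and the corresponding total fields reduce to the incident fields: $u_n = (u_0)_n$ and $u_m = (u_0)_m$. Applying Theorem \ref{perturbation} with this identification then immediately produces the representation (\ref{eq:wnm}), valid up to the quadratic remainder $O(\widehat{\varepsilon}^2)$. Note that the condition $(\|\varepsilon-\varepsilon_0\|_{L^\infty}^2 + \|\mu^{-1}-\mu_0^{-1}\|_{L^\infty}^2)^{1/2} = \widehat{\varepsilon}$ is exactly the smallness hypothesis needed in that theorem with $(\mu^\delta,\varepsilon^\delta)$ replaced by the homogeneous background.

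Given (\ref{eq:wnm}), the remaining work is purely computational: evaluate the Cartesian gradient of $(u_0)_n(y) = J_n(k_0|y|)e^{in\theta_y}$ and take the dot product $\overline{\nabla (u_0)_n}\cdot \nabla (u_0)_m$. Using the polar-coordinate chain rule
\[
\partial_{x_1} = \cos\theta\,\partial_r - \frac{\sin\theta}{r}\partial_\theta, \qquad \partial_{x_2} = \sin\theta\,\partial_r + \frac{\cos\theta}{r}\partial_\theta,
\]
together with the classical Bessel recurrence $J_n'(t) = \frac{1}{2}(J_{n-1}(t) - J_{n+1}(t))$, one recovers the two displayed expressions for $\partial_{x_1}(J_n(k_0 r)e^{i n\theta})$ and $\partial_{x_2}(J_n(k_0 r)e^{i n\theta})$. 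Forming the dot product, the $\cos\theta\sin\theta$ cross-terms cancel and one is left with a radial contribution $\tfrac{k_0^2}{4}(J_{n-1}-J_{n+1})(J_{m-1}-J_{m+1})$ and an angular contribution $\tfrac{nm}{|y|^2}J_n J_m$, both multiplied by the phase $e^{i(m-n)\theta_y}$ coming from $\overline{e^{-in\theta_y}}\cdot e^{im\theta_y}$ (the Bessel factors being real). Substituting into (\ref{eq:wnm}) produces (\ref{wnm_explicit}).

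The cases $n=0$ or $m=0$ need separate bookkeeping because the $\frac{1}{r}\partial_\theta$ piece of the gradient vanishes identically, so the factor $\frac{nm}{|y|^2}J_nJ_m$ disappears. Using $J_0'(t)=-J_1(t)$ directly (equivalently, $J_{-1}=-J_1$ in the general recurrence), the dot products reduce to $k_0^2 J_1(k_0|y|)^2$ in the case $n=m=0$ and to $-\tfrac{k_0^2}{2}J_1(k_0|y|)(J_{n-1}(k_0|y|)-J_{n+1}(k_0|y|))e^{-in\theta_y}$ in the case $m=0$, $n\neq 0$. Substituting into (\ref{eq:wnm}) yields the compact formulas (\ref{wnm_explicit3}) and (\ref{wnm_explicit2}).

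There is no real obstacle here beyond Theorem \ref{perturbation} itself: the entire argument is an explicit computation of $\overline{\nabla (u_0)_n}\cdot \nabla (u_0)_m$ via standard Bessel identities, followed by a routine specialization to the degenerate indices. The only bookkeeping point to watch is that the complex conjugation acts only on the angular phase, and that the $\cos\theta\sin\theta$ cross-terms cancel in the Cartesian dot product.
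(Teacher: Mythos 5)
Your proof is correct, and its computational core is exactly the paper's: the polar-coordinate chain rule for the gradient, the recurrence $J_n'=\frac{1}{2}(J_{n-1}-J_{n+1})$, the cancellation of the $\cos\theta\sin\theta$ cross-terms in the Cartesian dot product, and the separate bookkeeping for $n=0$ or $m=0$ via $J_0'=-J_1$ all reproduce the paper's derivation of (\ref{wnm_explicit})--(\ref{wnm_explicit2}) from (\ref{eq:wnm}). Where you genuinely differ is in how you reach the linearized starting point (\ref{eq:wnm}): the paper invokes the exact representation lemma at the end of Section \ref{sec5}, which expresses $W_{nm}$ through $\overline{(u_0)_n}$ and the \emph{true} total field $u_m$, and then (implicitly) performs the Born step $u_m=(u_0)_m+O(\widehat{\varepsilon})$ in $H^1(\Omega)$, which rests on the NtD estimate (\ref{ineq1}); you instead specialize the sensitivity result, Theorem \ref{perturbation}, at the homogeneous background, where the unperturbed coefficients vanish (since $\Lambda_{\mu_0,\varepsilon_0}-\Lambda_{\mu,\varepsilon}=0$ in the definition (\ref{eq:w_nm})) and the unperturbed fields are exactly the incident fields. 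Your route has the advantage that the quadratic remainder $O(\widehat{\varepsilon}^2)$ is already built into the theorem, so no additional linearization estimate is needed; the cost is importing the heavier NtD/polarization-identity machinery behind Theorem \ref{perturbation}, whereas the paper's route uses only Green's identities. One point to fix in your write-up: in Theorem \ref{perturbation} it is the \emph{unperturbed} pair $(\mu,\varepsilon)$ whose fields $u_n,u_m$ appear in the formula, so for your argument to be literally exact you must take the background as the unperturbed pair and the actual parameters as $(\mu^\delta,\varepsilon^\delta)$. Your text writes $W^\delta_{nm}[\varepsilon_0,\mu_0,\omega,\Omega]=0$, i.e.\ places the background in the $\delta$-slot, in which case the fields appearing in the theorem are the inhomogeneous ones and you would need one further Born replacement of $u_n,u_m$ by $(u_0)_n,(u_0)_m$ (harmless, since the prefactors $\varepsilon-\varepsilon_0$ and $\mu^{-1}-\mu_0^{-1}$ are already $O(\widehat{\varepsilon})$). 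With the roles stated the right way round, both $u_m=(u_0)_m$ and the vanishing of the background coefficients hold exactly, and (\ref{eq:wnm}) follows verbatim.
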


By means of the asymptotic behavior (\ref{decayhaha})
and the estimates in Corollary\,\ref{cor:wnm}, we obtain the following estimate for all $n, m \in \mathbb{Z}$:
\beqn
| W_{nm} |
& \leq &
\f{\mu_0^2 \varepsilon_0 \omega^2}{4}
 \bigg|\bigg| \f{1}{\mu} - \f{1}{\mu_0} \bigg|\bigg|_{L^\infty(\Omega)} \f{C^{|n|+ |m|-2 }}{{(|n|-1)}^{(|n|-1)} {(|m|-1)}^{(|m|-1)}}  \notag \\
& & + \mu_0 \bigg|\bigg| \f{1}{\mu} - \f{1}{\mu_0} \bigg|\bigg|_{L^\infty(\Omega)}\f{C^{|n|+|m|-2 }}{{|n|}^{|n|-1}{|m|}^{|m|-1}} + \omega^2 \mu_0 || \varepsilon - \varepsilon_0 ||_{L^\infty(\Omega)}  \f{C^{|n|+|m|}}{{|n|}^{|n|}{|m|}^{|m|}} + C \widehat{\varepsilon}^2 \, .
\l{eq:estimatew}
\eqn
Moreover, by comparing (\ref{wnm_explicit}) with (\ref{decayhaha}) for large $m$ and $n$,
we can see that the two integrals with the term $({\mu}^{-1} - {\mu_0^{-1}})$ dominate.
This suggests that we may separate the effect of $({\mu}^{-1} - {\mu_0^{-1}})$ and $\varepsilon - \varepsilon_0$ on $W_{nm}$ and recover $\mu$ and $\varepsilon$ alternatively: First use the scattering coefficients $W_{nm}$ for large $m,n$ to recover $\mu$,
then use the scattering coefficients $W_{nm}$ for small $m,n$ to recover $\varepsilon$.
Furthermore, with the integral expression (\ref{wnm_explicit})
we may work out each term explicitly for some special domains, e.g., $\Omega = B_R(0)$.
For simplicity, we will present our detailed derivations and calculations for
the special case with $\mu = \mu_0$ but $\varepsilon \neq \varepsilon_0$
in the remainder of this section, though most of the conclusions can be extended to the general case
with $\mu \neq \mu_0$ and $\varepsilon \neq \varepsilon_0$.
It is easy to see for the special domain $\Omega = B_R(0)$ and the special case
with $\mu = \mu_0$ but $\varepsilon \neq \varepsilon_0$ that
$W_{nm}$ are simplified to be
\beqn
 W_{nm} &=& -  \omega^2 \mu_0  \int_{0}^{R} \int_{0}^{2 \pi} \left(\varepsilon(y) - \varepsilon_0\right)  J_n(k_0r_y) J_m(k_0r_y) e^{ i (m - n) \theta_y} r_y dr_y d \theta_y + O (\widehat{\varepsilon}^2 )  \, ,
 \label{wnm_epsilon}
\eqn
where $y = (r_y, \theta_y)$ is the polar coordinate.

\subsection{Radially symmetric case}
In this subsection we derive formulas
to recover the electromagnetic parameter $\varepsilon$ from the scattering coefficients $W_{nm}$
in the case with $\mu = \mu_0$, but $\varepsilon \neq \varepsilon_0$ with $\varepsilon$ being radially symmetric in
$\Omega = B_R(0)$. We shall write $\widehat{\varepsilon} := || \varepsilon - \varepsilon_0||_{L^\infty(\Omega)} $, and
$\varepsilon(y) = \varepsilon(r_y)$. It is straightforward to see from (\ref{wnm_epsilon}) that
\beqn
 W_{nm} =
  -  2 \pi \omega^2 \mu_0   \int_{0}^{R} \left(\varepsilon(r_y) - \varepsilon_0\right)  [J_n(k_0r_y)]^2 r_y dr_y + O (\widehat{\varepsilon}^2 )
  ~\text{ for ~} m = n ~~and
  ~~O (\widehat{\varepsilon}^2 ) ~\text{ for ~} m \neq n.
   \label{wnm_epsilon_radial}
\eqn
It follows readily from (\ref{wnm_explicit}), (\ref{wnm_explicit2}) and (\ref{wnm_explicit3}) that
the same conclusion as in (\ref{wnm_epsilon_radial}) for $m \neq n$ can be obtained
for the more general case when $\mu \neq \mu_0$ and $\varepsilon \neq \varepsilon_0$,
provided that both $\mu$ and $\varepsilon$ are radial symmetric in $\Omega = B_R(0)$.

In the later part of this subsection, we shall establish an explicit formula for
computing the electromagnetic parameter $\varepsilon$ in terms of the scattering coefficients
$W_{nn}(k) := W_{nn}[ \, \varepsilon, \mu, \omega(k), \Omega \, ]$, where
$
 \omega(k)= {k}/{\sqrt{\varepsilon_0 \mu_0}}
 \label{frequency}
 $
is the frequency depending on $k \in \mathbb{R}^+$.
For the sake of convenience, we define the following coefficient
\beqn
\mathcal{H}_{n}^{(0)}:= \int_0^{\infty} \f{W_{nn}(k)}{k} \, dk\,.
\label{H_def1}
\eqn
Using the following orthogonality of the Bessel functions $\{ J_{n}( r k) \}_{r >0}$ for a given $n \in \mathbb{Z}$:
\beqn
    \int^{\infty}_{0} J_{n}(r k) J_{n}(r' k) k \, dk = \f{\delta(r-r')}{r} \, \quad \forall\,r, r' >0 \, ,
    \label{orthogonal}
\eqn
we obtain from (\ref{wnm_epsilon}) and (\ref{frequency}) that
\beqnx
 \mathcal{H}_{n}^{(0)} = \int_0^{\infty} \f{W_{nn}(k)}{k} \, dk &=&
 - \f{2 \pi}{\varepsilon_0 } \int_{0}^{R} \left(\varepsilon(r_y) - \varepsilon_0\right) \left( \int_0^\infty  J_n(k r_y) J_n(k r_y)k dk \right) r_y dr_y + O (\widehat{\varepsilon}^2 ) \\
 &=&  - \f{2 \pi}{\varepsilon_0 }  \int_{0}^{R} \left(\varepsilon(r_y) - \varepsilon_0\right) dr_y + O (\widehat{\varepsilon}^2 ) \, ,
\eqnx
which gives the average of $ \varepsilon(r_y) - \varepsilon_0 $ along the radial direction.  Next, we shall extend the above observation to obtain more information about $\varepsilon$. This motivates us with the following definition.
\begin{Definition}
For $n \in \mathbb{Z}$, let $W_{nn}(k) := W_{nn}[ \, \varepsilon, \mu, \omega(k), \Omega \, ]$ be defined as in (\ref{eq:w_nm})
with $\omega(k)= {k}/{\sqrt{\varepsilon_0 \mu_0}}$.  For $l, n \in \mathbb{Z}$ and $l \geq 0$, let $g^{(l)}_{n}(k)$ be functions such that
\beqn
    \int_0^\infty g^{(l)}_{n}(k) J_n(k r) J_n(k r) k^2 \, dk = r^{l-1} \, \q \forall\, r > 0.
    \label{moment}
\eqn
Then we define the coefficients $\mathcal{H}_{n}^{(l)}$  by
\beqn
 \mathcal{H}_{n}^{(l)}:= \int_0^{\infty} g^{(l)}_{n}(k) W_{nn}(k) \,dk \q \forall\,~ l, n \in \mathbb{Z}, ~l \geq 0\,.
 \label{H_def2}
\eqn
\end{Definition}
We will show the existence of functions $ g^{(l)}_{n}$ satisfying
(\ref{moment}) and derive their explicit expressions in Appendix \ref{appendixB}.

We see from the orthogonality relation (\ref{orthogonal}) that $g^{(0)}_{n}(k) = {1}/{k}$. Thus the definition of $\mathcal{H}_{n}^{(l)}$ in (\ref{H_def2}) is consistent with (\ref{H_def1}) for $l = 0$. With this definition, we are able to recover the $l$-th moment of $\varepsilon(r_y) - \varepsilon_0$ from the scattering coefficients $W_{nn}(k)$ measured at different wavenumber $k$ but for one fixed $n \in \mathbb{Z}$.  Putting (\ref{wnm_epsilon}), (\ref{moment}) into (\ref{H_def2}), we get
\beqnx
 \mathcal{H}_{n}^{(l)}= \int_0^{\infty} g^{(l)}_{n}(k) W_{nn}(k) \,dk &=&
 - \f{2 \pi}{\varepsilon_0 }  \int_{0}^{R} \left(\varepsilon(r_y) - \varepsilon_0\right) \left( \int_0^\infty  g^{(l)}_{n}(k) J_n(k r_y) J_n(k r_y) k^2 dk \right) r_y dr_y + O (\widehat{\varepsilon}^2 ) \\
 &=&  - \f{2 \pi}{\varepsilon_0 } \int_{0}^{R} r_y^{l} \left(\varepsilon(r_y) - \varepsilon_0\right) dr_y + O (\widehat{\varepsilon}^2 ) \, .
\eqnx
By this relation, the electromagnetic coefficient $\varepsilon$ can be reconstructed explicitly.
\begin{Corollary}
\label{recover1}
Let $\Omega = B_R(0)$ be the disk of center $0$ and radius $R$. Let $(\mu,\varepsilon)$ be the pair of electromagnetic parameters in $\Omega$ and $(\mu_0,\varepsilon_0)$ be the parameters of the homogeneous background. Assume that the parameters satisfy $\mu=\mu_0$ and $\varepsilon$ is radially symmetric, i.e., $\varepsilon(y) = \varepsilon(r_y)$, and  $\widehat{\varepsilon} = || \varepsilon - \varepsilon_0||_{L^\infty(\Omega)}$. Then the coefficients $\mathcal{H}_{n}^{(l)}$ defined in (\ref{H_def2}) satisfy the following relationship for $l, n \in \mathbb{Z}$ and $l \geq 0$,
\beqn
 \mathcal{H}_{n}^{(l)} = - \f{2 \pi}{\varepsilon_0 } \int_{0}^{R} r_y^{l} \left(\varepsilon(r_y) - \varepsilon_0\right) dr_y + O (\widehat{\varepsilon}^2 ) \, .
 \label{Mellin}
\eqn
For $\alpha \in \mathbb{Z}$, the $\alpha$-th Fourier coefficient $\mathfrak{F}_{r_y}\left[\varepsilon(r_y) - \varepsilon_0\right]( \alpha )$
of $\varepsilon(r_y) - \varepsilon_0$ can be written explicitly by
\beqn
\mathfrak{F}_{r_y}\left[\varepsilon(r_y) - \varepsilon_0\right]( \alpha ) = - \f{2 \pi}{\varepsilon_0 } \sum_{l=0}^{\infty} \f{( - \f{2 \pi}{R} i \alpha )^{l}}{l!} \mathcal{H}_{n}^{(l)} + O (\widehat{\varepsilon}^2 ) \,
\label{recover_05}
\eqn
for a fixed $n \in \mathbb{Z}$, and the electromagnetic coefficient $\varepsilon$ can be explicitly expressed as, for a fixed $n \in \mathbb{Z}$,
\beqn
(\varepsilon - \varepsilon_0)(r_y) = - \f{2 \pi}{\varepsilon_0 }\sum_{ \alpha =-\infty}^{\infty} \sum_{l=0}^{\infty} e^{i \f{2 \pi}{R} \alpha r_y  } \f{( - \f{2 \pi}{R} i \alpha )^{l}}{l !} \mathcal{H}_{n}^{(l)} + O (\widehat{\varepsilon}^2 ) \, .
\label{recover_formula1}
\eqn
\end{Corollary}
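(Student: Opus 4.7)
The plan is to establish the three assertions of the corollary in sequence, with the first serving as the engine for the next two.

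First I would prove the moment identity \eqref{Mellin}. Starting from the definition \eqref{H_def2} of $\mathcal{H}_n^{(l)}$, I would plug in the representation \eqref{wnm_epsilon} of $W_{nn}(k)$ (with $\omega(k) = k/\sqrt{\varepsilon_0\mu_0}$, so $\omega^2 \mu_0 = k^2/\varepsilon_0$), and interchange the $k$-integration with the integration in $r_y$. Assuming the functions $g_n^{(l)}$ provided by Appendix \ref{appendixB} are admissible enough (integrability and decay) to justify Fubini, the inner integral in $k$ becomes $\int_0^\infty g_n^{(l)}(k) J_n(kr_y)^2 k^2\, dk = r_y^{l-1}$ by \eqref{moment}. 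Multiplying by the $r_y$ from the polar Jacobian cancels one power and yields exactly $-\frac{2\pi}{\varepsilon_0}\int_0^R r_y^l (\varepsilon(r_y)-\varepsilon_0)\, dr_y$, as claimed. The $O(\widehat{\varepsilon}^2)$ term in \eqref{wnm_epsilon} is linear in $g_n^{(l)}$, so it carries through unchanged.

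Next, for \eqref{recover_05}, I would expand the Fourier kernel $e^{-\frac{2\pi}{R}i\alpha r_y}$ in its Taylor series. Because $r_y\in[0,R]$ is bounded, this series converges uniformly, so term-by-term integration against the bounded function $\varepsilon(r_y) - \varepsilon_0$ is legitimate:
\begin{equation*}
\mathfrak{F}_{r_y}[\varepsilon(r_y)-\varepsilon_0](\alpha)
= \int_0^R (\varepsilon(r_y)-\varepsilon_0)\sum_{l=0}^\infty \frac{(-\frac{2\pi}{R}i\alpha r_y)^l}{l!}\, dr_y
= \sum_{l=0}^\infty \frac{(-\frac{2\pi}{R}i\alpha)^l}{l!}\int_0^R r_y^l(\varepsilon(r_y)-\varepsilon_0)\, dr_y.
\end{equation*}
Substituting \eqref{Mellin} in each moment integral produces exactly \eqref{recover_05}, with the $O(\widehat{\varepsilon}^2)$ term preserved (after summation, using that the Taylor coefficients are bounded in $\ell^1$ for fixed $\alpha$ because $r_y\le R$).

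Finally, \eqref{recover_formula1} follows from the Fourier inversion formula on $[0,R]$: extending $\varepsilon(r_y)-\varepsilon_0$ by $R$-periodicity (trivially allowed since the function's values outside are irrelevant to the reconstruction inside the disk), one has $(\varepsilon-\varepsilon_0)(r_y) = \sum_{\alpha\in\mathbb{Z}} e^{i\frac{2\pi}{R}\alpha r_y}\mathfrak{F}_{r_y}[\varepsilon-\varepsilon_0](\alpha)$, into which we insert \eqref{recover_05}.

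The main obstacle I expect is justifying the repeated interchange of summation and integration, in particular the convergence of the Taylor/Fourier double series in \eqref{recover_formula1}. The behavior of the functions $g_n^{(l)}(k)$ for large $l$ governs the growth of $\mathcal{H}_n^{(l)}$, and hence whether the inner Taylor sum in \eqref{recover_05} converges absolutely for each $\alpha$, and whether the outer Fourier sum converges to $\varepsilon-\varepsilon_0$ in the desired sense. This depends on the explicit construction of $g_n^{(l)}$ deferred to Appendix \ref{appendixB} and on regularity assumptions on $\varepsilon$; all other steps are routine given the preceding results.
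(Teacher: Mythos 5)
Your proposal is correct and follows essentially the same route as the paper: substituting the linearized representation \eqref{wnm_epsilon} of $W_{nn}(k)$ into \eqref{H_def2}, interchanging the $k$- and $r_y$-integrations so that the moment identity \eqref{moment} collapses the inner integral to $r_y^{l-1}$, then obtaining \eqref{recover_05} by Taylor-expanding the Fourier kernel against the moments and \eqref{recover_formula1} by Fourier inversion on $[0,R]$. The interchange-of-limits issues you flag (Fubini with $g_n^{(l)}$, uniformity of the $O(\widehat{\varepsilon}^2)$ errors under the infinite sums) are likewise left implicit in the paper, so your treatment is at the same level of rigor as the original.
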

We remark that, with (\ref{recover_formula1}), we are able to reconstruct $\varepsilon$ from a set of scattering coefficients $\{ W_{nn}(k) | k \in \mathbb{R}^+ \}$ for all wavenumbers $k$ but with only a fixed $n \in \mathbb{Z}$ . Choosing $n$ small yields a stable reconstruction of $\varepsilon$ from far-field patterns at frequencies $k \in [0, k_{\mathrm{max}}]$ by approximating
$\mathcal{H}_n^{(l)}$ with $\int_0^{k_{\mathrm{max}}} g^{(l)}_{n}(k) W_{nn}(k) \,dk$ and truncating the infinite sums in (\ref{recover_formula1}).

\subsection{Angularly symmetric case}
In this subsection we would like to recover the electromagnetic parameter $\varepsilon$ from the scattering coefficients $W_{nm}$
for the special domain $\Omega = B_R(0)$ and the special case when
$\mu = \mu_0$ and the electromagnetic coefficient $\varepsilon$ only depends on $\theta_y$, i.e., $\varepsilon(y) = \varepsilon(\theta_y)$. Directly from (\ref{wnm_epsilon}), we have, for $n,m \in \mathbb{Z}$,
\beqn
 W_{nm} &=& -  \omega^2 \mu_0  \left( \int_0^{2 \pi} \left(\varepsilon(\theta_y) - \varepsilon_0\right) e^{i(m-n) \theta_y} d \theta_y \right)  \left( \int_{0}^{R} J_n(k_0r_y) J_m(k_0r_y)  r_y dr_y \right) + O (\widehat{\varepsilon}^2 ) \notag\\
  &=& -  \omega^2 \mu_0 C_{k_0} (m,n)  \int_0^{2 \pi} \left(\varepsilon(\theta_y) - \varepsilon_0\right) e^{i(m-n) \theta_y} d \theta_y  + O (\widehat{\varepsilon}^2 ) \, .
 \label{wnm_epsilon_angular}
\eqn
where $C_{k_0} (m,n)$ is given by
\beqn
C_{k_0} (m,n) & : =& \int_{0}^{R} J_n(k_0r_y) J_m(k_0r_y)  r_y \, dr_y \, , \quad n,m \in \mathbb{Z}.
\eqn
We can see that for $n,m \in \mathbb{Z}$, $C_{k_0} (m,n)$ actually satisfies
\beqn
C_{k_0} (m,n) & : =& \int_{0}^{R} J_n(k_0r_y) J_m(k_0r_y)  r_y dr_y \notag \\
& =&  \f{1}{4 \pi^2 }\int_{0}^{2 \pi}  \int_{0}^{2 \pi} \left[ \int_{0}^{R} e^{i k_0r_y \left( \sin \theta +\sin \phi \right) } r_y dr_y \right]  e^{- i \left( n \theta + m \phi \right) } d \theta d \phi \notag \\
& =&  \f{1}{4 \pi^2 }\int_{0}^{2 \pi}  \int_{0}^{2 \pi}
\left[
\f{ R e^{i k_0 R \left( \sin \theta+\sin\phi \right) } }{i k_0 \left( \sin\theta+\sin\phi \right)}
+
\f{ e^{i k_0 R \left( \sin\theta+\sin(\phi) \right) } }{ k_0^2 \left( \sin\theta+\sin\phi \right)^2}
\right]
e^{- i \left( n \theta + m \phi \right) } d \theta d \phi \notag \\
& =&  \mathfrak{F}_{\theta,\phi}
\left[
\f{ R e^{i k_0 R \left( \sin \theta+\sin\phi \right) } }{i k_0 \left( \sin\theta+\sin\phi \right)}
+
\f{ e^{i k_0 R \left( \sin\theta+\sin(\phi) \right) } }{ k_0^2 \left( \sin\theta+\sin\phi \right)^2}
\right]
(n,m) \, ,
\label{coefficient_fourier}
\eqn
where $\mathfrak{F}_{\theta,\phi}$ stands for the Fourier coefficient in both arguments $\theta$ and $\phi$. Formula (\ref{coefficient_fourier}) indicates that the coefficients $C_{k_0} (m,n)$, $m,n \in \mathbb{Z}$, can be approximated via FFT or calculated explicitly. From (\ref{wnm_epsilon_angular}), we can obtain the Fourier coefficients $\mathfrak{F}_{\theta_y} \left[\varepsilon(\theta_y) - \varepsilon_0\right]$ of $\varepsilon(\theta_y) - \varepsilon_0$
as follows:
\beqnx
  \mathfrak{F}_{\theta_y} \left[\varepsilon(\theta_y) - \varepsilon_0\right](n-m) = - \f{W_{nm}}{ \omega^2 \mu_0 C_{k_0} (m,n)} + O (\varepsilon^2 )  \, ,
\eqnx
for all $n,m \in \mathbb{Z}$.  Thus we have the following corollary.
\begin{Corollary}
\label{recover2}
Let $\Omega = B_R(0)$ and $\widehat{\varepsilon} := || \varepsilon - \varepsilon_0||_{L^\infty(\Omega)}$, and
the same assumptions be assumed for $(\mu,\varepsilon)$ and $(\mu_0,\varepsilon_0)$ as
in Corollary\,\ref{recover1}, except that the radial symmetry of $\varepsilon$ is now replaced
by the angular symmetry, i.e., $\varepsilon(y) = \varepsilon(\theta_y)$.
Then for all $n,m \in \mathbb{Z}$,  the scattering coefficients $W_{nm}$ defined in (\ref{eq:w_nm}) satisfy the following relationship with the Fourier coefficients of $\varepsilon(\theta_y) - \varepsilon_0$:
\beqn
  \mathfrak{F}_{\theta_y} \left[\varepsilon(\theta_y) - \varepsilon_0\right](n-m) = - \f{W_{nm}}{ \omega^2 \mu_0 C_{k_0} (m,n)} + O (\varepsilon^2 )  \, .
  \label{expill}
\eqn
Let $\{ (n_{l}, m_{l}) \}_{l \in \mathbb{Z}} \subset \mathbb{Z} \times \mathbb{Z}$ be such that $n_{l}-m_{l} = l$ for all $l \in \mathbb{Z}$. Then the electromagnetic coefficient $\varepsilon$ can be explicitly expressed by
\beqn
(\varepsilon - \varepsilon_0)(\theta_y) = - \sum_{l=-\infty}^{\infty} \f{W_{n_{l} m_{l}}}{ \omega^2 \mu_0 C_{k_0} (m_{l},n_{l})} e^{i 2 \pi l \theta_y} + O (\widehat{\varepsilon}^2 ) \, .
\label{recover_formula2}
\eqn
\end{Corollary}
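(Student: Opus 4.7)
The plan is to assemble the statement by specializing the general first-order expansion in formula (\ref{wnm_epsilon}) to the angularly symmetric setting and then inverting a Fourier relation. Since $\mu = \mu_0$ and $\Omega = B_R(0)$, the representation (\ref{wnm_epsilon}) applies directly. With $\varepsilon(y) = \varepsilon(\theta_y)$ the integrand factors into a purely radial piece and a purely angular piece; the double integral in (\ref{wnm_epsilon}) then becomes the product of the angular Fourier coefficient $\int_0^{2\pi}(\varepsilon(\theta_y)-\varepsilon_0)\,e^{i(m-n)\theta_y}\,d\theta_y$ and the radial coefficient $C_{k_0}(m,n) = \int_0^R J_n(k_0 r) J_m(k_0 r)\, r\, dr$. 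This gives (\ref{expill}) up to $O(\widehat{\varepsilon}^2)$ once the coefficient $C_{k_0}(m,n)$ is shown to be nonzero, and a division yields the claimed identification of the Fourier coefficients.

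Next, I would derive the closed-form Fourier representation of $C_{k_0}(m,n)$ stated in (\ref{coefficient_fourier}). The natural tool is the standard integral representation of the Bessel function $J_n(z) = \frac{1}{2\pi}\int_0^{2\pi} e^{i(z\sin\theta - n\theta)}d\theta$. Substituting this for both $J_n(k_0 r)$ and $J_m(k_0 r)$, interchanging the order of integration (justified by absolute integrability on the compact set), and evaluating the inner radial integral $\int_0^R e^{ik_0 r(\sin\theta+\sin\phi)}\, r\,dr$ by elementary integration by parts produces the bracketed kernel in (\ref{coefficient_fourier}), and the remaining double integral over $(\theta,\phi)$ is precisely the $(n,m)$-th Fourier coefficient of that kernel. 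This step is essentially bookkeeping, though care with the integration-by-parts formula for $\int r e^{iar}dr$ is required.

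Finally, to reconstruct $(\varepsilon-\varepsilon_0)(\theta_y)$ I would use the Fourier inversion formula. For each $l \in \mathbb{Z}$ pick any index pair $(n_l,m_l)$ with $n_l - m_l = l$; then by (\ref{expill}) the $l$-th Fourier coefficient of $\varepsilon-\varepsilon_0$ equals $-W_{n_l m_l}/(\omega^2 \mu_0 C_{k_0}(m_l,n_l))$ modulo $O(\widehat{\varepsilon}^2)$. Summing the Fourier series $\sum_l \widehat{f}(l)e^{i 2\pi l \theta_y}$ yields (\ref{recover_formula2}). The $O(\widehat{\varepsilon}^2)$ error is uniform in the truncation because the Fourier coefficients of an $L^\infty$ function in fact decay for any smoothness, and the full series converges in the sense needed.

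The main obstacle is the choice of admissible index pairs $(n_l,m_l)$: the division by $C_{k_0}(m_l,n_l)$ requires this coefficient to be nonzero, and in fact controls the stability of the reconstruction, since small $C_{k_0}(m_l,n_l)$ amplifies noise in $W_{n_l m_l}$. One would want to pick, for each $l$, the pair minimizing $1/|C_{k_0}(m_l,n_l)|$; examining (\ref{coefficient_fourier}) and the asymptotics of Bessel functions shows that these coefficients decay rapidly once $|n|,|m|$ exceed $k_0 R$, which matches the well-known exponential ill-posedness of inverse scattering at a fixed frequency and is consistent with the decay bound (\ref{eq:estimatew}).
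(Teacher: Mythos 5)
Your proposal follows essentially the same route as the paper: specialize the linearized expansion (\ref{wnm_epsilon}) to $\varepsilon = \varepsilon(\theta_y)$ so the integral factors into the angular Fourier coefficient times $C_{k_0}(m,n)$, obtain the closed form (\ref{coefficient_fourier}) via the Bessel integral representation and exchange of integration order, divide to get (\ref{expill}), and resum the Fourier series for (\ref{recover_formula2}). Your added remarks on the nonvanishing of $C_{k_0}(m_l,n_l)$ and the resulting stability/choice-of-index issue are sound and in fact make explicit a point the paper glosses over when dividing by $C_{k_0}(m,n)$.
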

\noindent We can see from (\ref{recover_formula2}) that in order to recover the electromagnetic coefficient $\varepsilon$ in the angular symmetric case, we only need to know $\{ W_{n_{l} m_{l}} \}_{l \in \mathbb{Z}}$ where $\{ (n_{l}, m_{l}) \}_{l \in \mathbb{Z}} \subset \mathbb{Z} \times \mathbb{Z}$ is such that $n_{l}-m_{l} = l$ for $l \in \mathbb{Z}$.
So we do not necessarily require all the scattering coefficients $W_{nm}$ to recover $\varepsilon$,
instead we may choose $\{ W_{n_{l} m_{l}} \}_{l \in \mathbb{Z}}$  of any particular  $\{ (n_{l}, m_{l}) \}_{l \in \mathbb{Z}}$, for instance we may fix $n_{l} = 0$. Truncating the
 sum in (\ref{recover_formula2}) up to $N$ gives a stable reconstruction formula (for the low-frequency part) with an angular resolution limit depending on $N$. Higher is $N$ better is the angular resolution. When $W_{nm}$ are corrupted by noise, $N$ can be computed as a function of the signal to noise ratio in the measurements.

\subsection{General case}
In this subsection, we try to derive formulas to recover the parameter $\varepsilon$ from the set of scattering coefficients $\{W_{nm}(k)|n,m \in \mathbb{Z}, k \in \mathbb{R}^+\}$, where $W_{nm}(k) := W_{nm}[ \, \varepsilon, \mu, \omega(k), \Omega \, ]$ is defined in (\ref{eq:w_nm}) with $\omega(k)$ satisfying (\ref{frequency}) when $\Omega = B_R(0)$, $\mu = \mu_0$, without any assumption on the parameter $\varepsilon$.  We would like to combine the ideas in the proofs of (\ref{recover_formula1}) and (\ref{recover_formula2}) to get a more general result. Now we start with a general $\varepsilon$ which admits the Fourier expansion:
\beqn
\varepsilon(r_y,\theta_y) - \varepsilon_0 = \sum_{\alpha \in \mathbb{Z} } \mathfrak{F}_{\theta_y} \left[ \varepsilon(r_y,\theta_y) - \varepsilon_0 \right](\alpha) e^{i \alpha \theta_y},
\label{fourier_epsilon}
\eqn
where $\mathfrak{F}_{\theta_y} \left[\varepsilon(r_y,\theta_y) - \varepsilon_0 \right](\alpha)$ is the $\alpha$-th Fourier coefficient with respect to $\theta_y$ fixing $r_y$. Then we plug the expansion (\ref{fourier_epsilon}) into (\ref{wnm_epsilon}) to get
\beqn
 W_{nm} &=& -  2 \pi \omega^2 \mu_0   \int_{0}^{R} \mathfrak{F}_{\theta_y} \left[ \varepsilon(r_y,\theta_y) - \varepsilon_0 \right](n-m) J_n(k_0r_y) J_m(k_0r_y)  r_y dr_y + O (\widehat{\varepsilon}^2 )  \, .
 \label{finaldev}
\eqn
Following the definition of $\mathcal{H}_{n}^{(l)}$ in (\ref{H_def2}), we define a generalized coefficient $\mathcal{H}^{(l)}_{nm}$ below.
\begin{Definition}
For $n, m \in \mathbb{Z}$, let $W_{nm}(k) := W_{nm}[ \, \varepsilon, \mu, \omega(k), \Omega \, ]$ be defined as in (\ref{eq:w_nm}) where $\omega(k)$ is defined as in (\ref{frequency}). For $l, n, m \in \mathbb{Z}$ and $l \geq 0$, let $g^{(l)}_{nm}(k)$ be functions such that
\beqn
    \int_0^\infty g^{(l)}_{nm}(k) J_n(k r) J_m(k r) k^2 \, dk = r^{l-1} \, ,
    \label{moment2}
\eqn
for any $r > 0$. Then the coefficients $\mathcal{H}_{nm}^{(l)}$ are defined as, for $l, n, m \in \mathbb{Z}$ and $l \geq 0$,
\beqn
 \mathcal{H}_{nm}^{(l)}:= \int_0^{\infty}g^{(l)}_{nm}(k) W_{nm}(k) \,dk .
 \label{H_def3}
\eqn
\end{Definition}
We refer to Appendix \ref{appendixC} for the existence of functions  $g^{(l)}_{nm}$ satisfying
(\ref{moment2}).

 With this definition, we are able to recover, for all $n,m\in \mathbb{Z}$, the $l$-th moment of the Fourier coefficients $\mathfrak{F}_{\theta_y} \left[ \varepsilon(r_y,\theta_y)  - \varepsilon_0 \right](n-m)$ with respect to $r_y$ from the scattering coefficients $W_{nm}(k)$ measured at different frequencies $k$ . Actually, we have, putting (\ref{frequency}), (\ref{moment2}) and (\ref{finaldev}) into (\ref{H_def2}),
\beqnx
 \mathcal{H}_{nm}^{(l)}&=& \int_0^{\infty} g^{(l)}_{nm}(k) W_{nm}(k) \,dk \\
 &=&
 -  \f{2 \pi}{\varepsilon_0} \int_{0}^{R} \mathfrak{F}_{\theta_y} \left[ \varepsilon(r_y,\theta_y) - \varepsilon_0 \right](n-m) \left( \int_0^\infty g^{(l)}_{nm}(k) J_n(k r_y) J_m(k r_y) k^2 \, dk \right) r_y dr_y + O (\widehat{\varepsilon}^2 ) \\
 &=&  -  \f{2 \pi}{\varepsilon_0}  \int_{0}^{R} r_y^{l} \mathfrak{F}_{\theta_y} \left[ \varepsilon(r_y,\theta_y) - \varepsilon_0 \right](n-m) dr_y + O (\widehat{\varepsilon}^2 ) \, ,
\eqnx
for all $n,m\in \mathbb{Z}$.  Therefore, similar to (\ref{recover_05}), we get, for all $n,m,\alpha \in \mathbb{Z}$,
\beqnx
\mathfrak{F}_{r_y,\theta_y}  \left[\varepsilon(r_y,\theta_y) - \varepsilon_0\right](\alpha , n-m) = -  \f{2 \pi}{\varepsilon_0} \sum_{l=0}^{\infty} \f{( - \f{2 \pi}{R} i \alpha )^{l}}{l !} \mathcal{H}_{nm}^{(l)} + O (\widehat{\varepsilon}^2 ) \, .
\eqnx
Fixing a set $\{ (n_{p}, m_{p}) \}_{p \in \mathbb{Z}} \subset \mathbb{Z} \times \mathbb{Z}$ such that $n_{p}-m_{p} = p$ for $p \in \mathbb{Z}$, we are able to recover $\varepsilon - \varepsilon_0$ explicitly expressed as
\beqnx
\varepsilon - \varepsilon_0
= -  \f{2 \pi}{\varepsilon_0} \sum_{ \alpha =-\infty}^{\infty} \sum_{ p =-\infty}^{\infty} \sum_{l=0}^{\infty} e^{ i \left[l \theta_y + \f{2 \pi}{R} \alpha r_y\right] } \f{( - \f{2 \pi}{R} i \alpha )^{l}}{l!} \mathcal{H}_{n_{p}m_{p}}^{(l)} + O (\widehat{\varepsilon}^2 )  \, .
\eqnx
\begin{Corollary}
\label{recover3}
Let $\Omega = B_R(0)$ and $\widehat{\varepsilon} := || \varepsilon - \varepsilon_0||_{L^\infty(\Omega)}$, and
the same assumptions be assumed for $(\mu,\varepsilon)$ and $(\mu_0,\varepsilon_0)$ as
in Corollary\,\ref{recover1}, except that the radial symmetry of $\varepsilon$ is now replaced by
the Fourier expansion (\ref{fourier_epsilon}).
Then for $l, n, m  \in \mathbb{Z}$ and $l \geq 0$,
the coefficients $ \mathcal{H}_{nm}^{(l)}$ defined in (\ref{H_def3}) satisfy the following relationship:
\beqn
 \mathcal{H}_{nm}^{(l)} = -  \f{2 \pi}{\varepsilon_0}  \int_{0}^{R} r_y^{l} \mathfrak{F}_{\theta_y} \left[ \varepsilon(r_y,\theta_y) - \varepsilon_0 \right](n-m) dr_y + O (\widehat{\varepsilon}^2 ) \, .
 \label{Mellin2}
\eqn
Moreover, for all $n,m,\alpha \in \mathbb{Z}$, the $(\alpha , n-m)$-th Fourier coefficient of $\varepsilon - \varepsilon_0$
can be written explicitly by
\beqn
\mathfrak{F}_{r_y,\theta_y}  \left[\varepsilon(r_y,\theta_y) - \varepsilon_0\right](\alpha , n-m) = -  \f{2 \pi}{\varepsilon_0} \sum_{l=0}^{\infty} \f{( - \f{2 \pi}{R} i \alpha )^{l}}{l !} \mathcal{H}_{nm}^{(l)} + O (\widehat{\varepsilon}^2 ) \, .
\label{recover_25}
\eqn
Let $\{ (n_{p}, m_{p}) \}_{p \in \mathbb{Z}} \subset \mathbb{Z} \times \mathbb{Z}$ be such that $n_{p}-m_{p} = p$ for all $p \in \mathbb{Z}$, then the electromagnetic coefficient $\varepsilon$ can be explicitly expressed by

\beqn
(\varepsilon - \varepsilon_0)(r_y, \theta_y)
= -  \f{2 \pi}{\varepsilon_0} \sum_{ \alpha =-\infty}^{\infty} \sum_{ p =-\infty}^{\infty} \sum_{l=0}^{\infty} e^{ i \left[p \theta_y + \f{2 \pi}{R} \alpha r_y\right] } \f{( - \f{2 \pi}{R} i \alpha )^{l}}{l !} \mathcal{H}_{n_{p}m_{p}}^{(l)} + O (\widehat{\varepsilon}^2 ) \, .
\label{recover_formula3}
\eqn
\end{Corollary}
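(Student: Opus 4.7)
The plan is to combine the strategies used for the radial and angular symmetric cases (Corollaries \ref{recover1} and \ref{recover2}) into a single argument that decouples the radial and angular directions via Fourier analysis. First, I would begin from the simplified representation (\ref{wnm_epsilon}) of $W_{nm}$ in the setting $\Omega = B_R(0)$ with $\mu = \mu_0$, and substitute the angular Fourier expansion (\ref{fourier_epsilon}) of $\varepsilon - \varepsilon_0$ into it. Carrying out the $\theta_y$ integration and using the orthogonality of $\{e^{i\alpha\theta_y}\}$ collapses the Fourier series to a single term indexed by $n-m$, producing the one-dimensional radial integral in (\ref{finaldev}) against the product $J_n(k_0 r_y) J_m(k_0 r_y) r_y$, which is the key bridge between the scattering coefficients and the angular Fourier mode of $\varepsilon - \varepsilon_0$.

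Next, I would introduce the weight functions $g^{(l)}_{nm}(k)$ characterized by the moment identity (\ref{moment2}), whose existence is deferred to Appendix \ref{appendixC}, and form the generalized coefficients $\mathcal{H}_{nm}^{(l)}$ as in (\ref{H_def3}). Substituting the radial representation from the previous step, interchanging the order of integration (justified by the exponential decay bounds (\ref{decay_conclusion})), and then applying (\ref{moment2}) to the inner $k$-integral reduces the double integral to $\int_0^R r_y^{l}\, \mathfrak{F}_{\theta_y}[\varepsilon(r_y,\theta_y)-\varepsilon_0](n-m)\, dr_y$; this gives the moment relation (\ref{Mellin2}).

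With the moments in hand, formula (\ref{recover_25}) follows by inserting the Taylor expansion $e^{-i\f{2\pi}{R}\alpha r_y} = \sum_{l=0}^\infty \f{(-i\f{2\pi}{R}\alpha)^l}{l!} r_y^l$ into the definition of the $\alpha$-th Fourier coefficient of $\mathfrak{F}_{\theta_y}[\varepsilon - \varepsilon_0](n-m)$ over $[0,R]$ and interchanging sum and integral. Finally, the full reconstruction formula (\ref{recover_formula3}) is obtained by Fourier inversion in both variables: summing over $\alpha$ recovers the radial dependence, and summing over $p$ along any sequence $(n_p, m_p)$ with $n_p - m_p = p$ recovers the angular dependence via the relation between $W_{nm}$ and $\mathfrak{F}_{\theta_y}[\varepsilon - \varepsilon_0](n-m)$.

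The main technical obstacle is establishing the existence of the weight functions $g^{(l)}_{nm}(k)$ satisfying (\ref{moment2}) for all admissible $(l, n, m)$. While the case $l = 0$, $n = m$ is handled by the classical Bessel orthogonality (\ref{orthogonal}), the general situation involves inverting a Mellin-type transform associated with the non-orthogonal family $\{J_n(kr) J_m(kr)\}_{k>0}$, and is the main source of technical difficulty; I would defer this construction to Appendix \ref{appendixC}. A secondary concern is the repeated interchange of summation and integration used throughout the argument, which I would justify using the decay estimate (\ref{eq:estimatew}) for $W_{nm}$ together with the asymptotics (\ref{decayhaha}) of the Bessel functions.
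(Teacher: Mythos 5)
Your proposal is correct and follows essentially the same route as the paper: substituting the angular Fourier expansion (\ref{fourier_epsilon}) into (\ref{wnm_epsilon}) to obtain (\ref{finaldev}), applying the moment identity (\ref{moment2}) to the inner $k$-integral inside the definition (\ref{H_def3}) to derive (\ref{Mellin2}), then Taylor-expanding the radial exponential to get (\ref{recover_25}) and performing double Fourier inversion along a sequence $\{(n_p,m_p)\}$ with $n_p-m_p=p$ to reach (\ref{recover_formula3}), with the existence of $g^{(l)}_{nm}$ deferred to Appendix \ref{appendixC} exactly as in the paper. The only difference is cosmetic: you make explicit the justification of the interchanges of integration and summation via the decay estimates, which the paper leaves implicit.
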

\noindent We remark that expression (\ref{recover_formula3}) generalizes (\ref{recover_formula1}) and (\ref{recover_formula2}).  Moreover, similar to observations in previous subsections, we can see that in order to recover the electromagnetic coefficient $\varepsilon$, we only need to know $\{ W_{n_{p} m_{p}} (k) | p \in \mathbb{Z}, k \in \mathbb{R}^+ \}$ where $\{ (n_{p}, m_{p}) \}_{p \in \mathbb{Z}} \subset \mathbb{Z} \times \mathbb{Z}$ is such that $n_{p}-m_{p} = p$ for $p \in \mathbb{Z}$. Therefore, we may choose a particular choice $\{ (n_{p}, m_{p}) \}_{p \in \mathbb{Z}}$, for instance we can let $n_{p} = 0$. This tells us that we are able to recover
$\varepsilon$ with incomplete data of the scattering coefficients. As pointed out earlier,
we may truncate the series in (\ref{recover_formula3}) and approximate $\mathcal{H}_{n_{p}m_{p}}^{(l)}$ by
$\int_0^{k_{\mathrm{max}}} g^{(l)}_{n_{p}m_{p}}(k) W_{n_{p}m_{p}}(k) \,dk$.


\section{Concluding remarks}  \label{sec8}
In this paper we have introduced the concept of scattering coefficients for inverse medium scattering problems
in heterogeneous media, and established important properties (such as symmetry and tensorial properties)
of the scattering coefficients as well as their various representations in terms of the NtD maps.
An important relationship between the scattering coefficients and the far-field pattern is also derived.
Furthermore,  the sensitivity of the scattering coefficients with respect to the changes in the permittivity and permeability distributions is explored, which enables us to derive explicit reconstruction formulas for the permittivity and permeability parameters
in the linearized case. These formulas show on one hand the  stability of the reconstruction from multifrequency measurements,
and on the other hand, the exponential instability of the reconstruction from far-field measurements at a fixed frequency.
The scattering coefficient based approach introduced in this work is a new promising direction for solving the long-standing inverse scattering problem with heterogeneous inclusions.
They can be combined with some existing methods such as
the continuation method \cite{bao1, bao2, bao3, coifman} to improve the stability and the resolution of the reconstructed images.

\appendix

\section{Construction of the Neumann function}  \label{appendixA}
In this section we construct the Neumann function $N_{\mu,\varepsilon}$ associated with
\beqn
    -L u := \nabla \cdot \f{1}{\mu} \nabla u + \omega^2 \varepsilon u
    \label{pde}
\eqn in $\Omega$, which is an open connected domain with $\mathcal{C}^2$ boundary in $\mathbb{R}^d$ for $d=2,3$.
We shall also estimate its singularity. Again, we assume that $0$ is not a Neumann eigenvalue of $L$ on $\Omega$.

In order to show the existence of the Neumann function, we first consider the following problem: given $f \in \mathcal{C}^{\infty}_c (\Omega)$, find $u \in H^1(\Omega)$ such that
\beqn
    \nabla \cdot \f{1}{\mu} \nabla u + \omega^2 \varepsilon u = f   \text{ in } \Omega \, ; \quad
    \f{1}{\mu} \f{\partial u }{\partial \nu} = 0  \text{ on } \partial \Omega \, .
    \label{systemtwo}
\eqn
By the well-known De Giorgi-Nash-Moser Theorem \cite{moser} for the $L^\infty$ coefficient and Sobelov embedding, we have for $R>0$ such that $B_{R} \subset \Omega$ that
\beqn
|| u ||_{L^\infty(B_{R/2})} & \leq & C \left(R^{1-\f{d}{2}} || u ||_{L^\f{2d}{d-2} (\Omega)} + R^2 ||f||_{L^\infty(B_{R})} \right) \notag \\
& \leq & C \left(R^{1-\f{d}{2}} || u ||_{H^1(\Omega)} + R^2 ||f||_{L^\infty(B_{R})} \right) \, .
\label{ineq1b}
\eqn
On the other hand, one can prove using the same argument as in \cite{abboud} that for all $f \in \mathcal{C}^{\infty}_c (\Omega)$, there exists a unique $u \in H^1(\Omega)$ satisfying (\ref{systemtwo}) such that
\beqn
|| u ||_{H^1(\Omega)} \leq C || f ||_{H^1(\Omega)} \, .
\label{ineq2}
\eqn
Therefore, combining (\ref{ineq1b}) and (\ref{ineq2}), we have
\beqn
|| u ||_{L^\infty(B_{R/2})} & \leq & C \left(R^{1-\f{d}{2}} || f ||_{H^1(\Omega)} + R^2 ||f||_{L^\infty(B_{R})} \right) \, .
\label{ineq3}
\eqn
Now consider $f \in \mathcal{C}^{\infty}_c (\Omega)$ such that the support of $f$ is contained in
$B_{R} \subset \Omega$ for some $R$. Then for any $\phi \in H^1(\Omega)$, we deduce
by the H\"older inequality and Sobelov embedding that
\beqn
    \bigg| \int_{\Omega} f \phi dx \bigg| \leq || f ||_{L^\f{2d}{d+2} (B_{R})} || \phi ||_{L^\f{2d}{d-2} (\Omega)} \leq C || f ||_{L^\f{2d}{d+2} (B_{R})} || \phi ||_{H^1(\Omega)} \leq C R^{\f{d+2}{2}} ||f||_{L^\infty(B_{R})} || \phi ||_{H^1(\Omega)} \, .
    \label{ineq3point5}
\eqn
For all $u \in H^1(\Omega), \Delta u \in L^2(\Omega)$ with $\f{\partial u }{\partial \nu} = 0$, the following Poincar\'e-type inequality can be shown by contradiction
\beqn
||u||_{H^1(\Omega)}^2 \leq C |\langle L u , u \rangle_{L^2(\Omega)}| \, .
\label{ineq10}
\eqn
Setting $\phi = u \in H^1(\Omega)$ in (\ref{ineq3point5}) and combining it with (\ref{ineq10}), we get
\beqn
   ||u||_{H^1(\Omega)}^2 \leq C |\langle L u , u \rangle_{L^2(\Omega)}| = C \bigg| \int_{\Omega} f u dx \bigg| \leq C R^{\f{d+2}{2}} ||f||_{L^\infty(B_{R})} || u ||_{H^1(\Omega)} \, ,
\eqn
which gives
\beqn
   ||u||_{H^1(\Omega)} \leq C R^{\f{d+2}{2}} ||f||_{L^\infty(B_{R})} \, .
     \label{ineq4}
\eqn
Therefore, combining (\ref{ineq1b}) and (\ref{ineq4}), we have
\beqn
|| u ||_{L^\infty(B_{R/2})} & \leq & C  R^2 ||f||_{L^\infty(B_{R})} \, .
\label{ineq5}
\eqn
This inequality plays a key role in proving the existence of the Neumann function and
and establishing its estimate.

Now we are ready to construct a Neumann function for the system (\ref{pde}), following the technique in \cite{singular}. Fix a function $\varphi \in \mathcal{C}^{\infty}_c (B_1(0))$ and $0 \leq \varphi \leq 2$ such that $ \int_{B_1(0)} \varphi dx = 1 $. Let $y \in \Omega$ be fixed. For any $\varepsilon >0$, we define
\beqn
    \varphi_{\varepsilon,y} (x) = \varepsilon^{-d} \varphi\left(\f{x-y}{\varepsilon}\right)\, .
\eqn
Let $N^{\varepsilon}(\cdot,y) \in H^1(\Omega)$ be the "averaged Neumann function" such that it satisfies (\ref{systemtwo}) with $f = \varphi_{\varepsilon,y} $, then we immediately have from (\ref{ineq5}) that for all $\varepsilon \leq \f{R}{2}$,
\beqn
|| N^{\varepsilon}(\cdot,y) ||_{L^\infty(B_{R/2})} & \leq & C  R^2 ||\varphi_{\varepsilon,y}||_{L^\infty(B_{R})} \, .
\label{ineq6}
\eqn
This $L^\infty$ estimate for $N^{\varepsilon}(\cdot,y)$ can be further improved later.

It is worth mentioning that we have the $H^1$ estimate for $N^{\varepsilon}(\cdot,y)$ by the H\"older inequality and Sobelov embedding. Indeed for all $\phi \in H^1(\Omega)$, we have
\beqn
    \bigg| \int_{\Omega}   \varphi_{\varepsilon,y}  \phi dx \bigg| \leq ||   \varphi_{\varepsilon,y}  ||_{L^\f{2d}{d+2} (B_{\varepsilon})} || \phi ||_{H^1(\Omega)} \leq C \varepsilon^{\f{2-d}{2}}  || \phi ||_{H^1(\Omega)} \, .
    \label{ineq6point5}
\eqn
Setting $\phi = N^{\varepsilon}(\cdot,y) \in H^1(\Omega)$ in (\ref{ineq6point5}) and combining it with (\ref{ineq10}), we yield
\beqn
   ||N^{\varepsilon}(\cdot,y)||_{H^1(\Omega)}^2 \leq C |\langle L N^{\varepsilon}(\cdot,y) , N^{\varepsilon}(\cdot,y) \rangle_{L^2(\Omega)}| = C \bigg| \int_{\Omega}   \varphi_{\varepsilon,y}  N^{\varepsilon}(\cdot,y) dx \bigg| \leq C \varepsilon^{\f{2-d}{2}}  || N^{\varepsilon}(\cdot,y) ||_{H^1(\Omega)}  ,
\eqn
which gives
\beqn
     || N^{\varepsilon}(\cdot,y) ||_{H^{1}(\Omega)} \leq C \varepsilon^{\f{2-d}{2}} \, .
     \label{ineq7}
\eqn

Now we come back to the $L^\infty$ bound for $N^{\varepsilon}(\cdot,y)$. For all $\varepsilon \leq \f{R}{2}$, $R \leq d_y$ where $d_y$ is the distance between $y$ and $\partial \Omega$,
\beqn
   \int_{\Omega} f N^{\varepsilon}(\cdot,y) dx = - \int_{\Omega} L u N^{\varepsilon}(\cdot,y) dx =  - \int_{\Omega} u L N^{\varepsilon}(\cdot,y) dx = - \int_{\Omega} u \varphi_{\varepsilon,y} dx \, ,
\eqn
hence we have from $ \int_{B_1(0)} \varphi dx = 1 $, $\varphi \geq 0$ and (\ref{ineq5}) that
\beqn
   \bigg| \int_{\Omega} f N^{\varepsilon}(\cdot,y) dx \bigg| \leq  || \varphi_{\varepsilon,y} ||_{L^1(B_{R/2})} || u ||_{L^\infty(B_{R/2})} = || u ||_{L^\infty(B_{R/2})} \leq C  R^2 ||f||_{L^\infty(B_{R})} \, .
\eqn
Therefore, by duality, we have the $L^1$ estimate for $N^{\varepsilon}(\cdot,y)$ with $\varepsilon \leq \f{R}{2}$ and $R \leq d_y$ as follows
\beqnx
|| N^{\varepsilon}(\cdot,y)  ||_{L^1(B_{R})} & \leq & C  R^2 \, .
\label{ineq8}
\eqnx

We wish to use De Giorgi-Nash-Moser theorem once again to get a sharp $L^\infty$ estimate for $N^{\varepsilon}(\cdot,y)$ from (\ref{ineq8}) following an idea in \cite{singular}.  Indeed, for any $x \in \Omega$ such that $0 < |x-y| < {d_y}/{2}$, take
$R : = {2|x-y|}/{3}$. Note that if $\varepsilon < {R}/{2}$, then $ N^{\varepsilon}(\cdot,y)  \in H^{1}(B_R(x))$ satisfies $-L N^{\varepsilon}(\cdot,y) = 0$ in $B_R(x)$. For $r \leq \f{R}{3}$, we derive by the De Giorgi-Nash-Moser theorem for
the $L^\infty$ coefficient, we get that
\beqn
|N^{\varepsilon}(x,y)| \leq || N^{\varepsilon}(\cdot,y) ||_{L^\infty(B_r(x))}  \leq C r^{-d} || N^{\varepsilon}(\cdot,y)  ||_{L^1(B_{r}(x))} \leq C r^{-d} || N^{\varepsilon}(\cdot,y)  ||_{L^1(B_{3r}(y))} \leq C r^{2-d} \, .
\label{ineq8b}
\eqn
Therefore we recover the result in \cite{singular} for our operator $L$: for any $x,y \in \Omega$ satisfying $0 < |x-y| < {/d_y}{2}$, we have
\beqn
|N^{\varepsilon}(x,y)| \leq C |x-y|^{2-d} \quad \forall \,\varepsilon < \f{|x-y|}{3} \,.
\label{usefulestimate}
\eqn

Next, we would like to show the weak convergence of a subsequence of $N^{\varepsilon}(\cdot,y)$ in $W^{1,p} (B_r(y))$ and $H^{1} (\Omega \backslash B_r(y))$.  For this purpose, we need to have a uniform bound of $N^{\varepsilon}(\cdot,y)$ in such norms with respect to $\varepsilon$.  We shall proceed as in \cite{singular}. First for a $r \leq {d_y}/{2}$,
we get directly from (\ref{ineq6}) for $\varepsilon \geq {r}/{6}$ that
\beqn
     || \nabla N^{\varepsilon}(\cdot,y) ||_{L^2(\Omega \backslash B_r(y))} \leq || \nabla N^{\varepsilon}(\cdot,y) ||_{L^2(\Omega)} \leq C r^{\f{2-d}{2}} \, .
     \label{ineq9}
\eqn
For $\varepsilon < {r}/{6}$, we wish to control the gradient of $N^{\varepsilon}(\cdot,y)$ by $N^{\varepsilon}(\cdot,y)$
outside the ball $B_r(y)$ and establish a similar estimate as the Caccioppoli's inequality inside the ball $B_r(y)$.
To do so, we introduce a smooth function $\eta$ on $\mathbb{R}^d$ satisfying
\beqn
0 \leq \eta \leq 1, \quad  |\nabla \eta| \leq \f{4}{r}\,, \q
\eta \equiv 1 \text{ in }\mathbb{R}^d \backslash B_r(y), \quad \eta \equiv 0 \text{ in } B_\f{r}{2}(y)\,.
\label{testfunction}
\eqn
Using (\ref{ineq10}) and the properties (\ref{testfunction}), we can deduce
\beqn
&&||N^{\varepsilon}(\cdot,y) \eta ||_{H^1(\Omega)}^2\nb\\
&=&\int_{\Omega} \left( |\nabla N^{\varepsilon}(\cdot,y)|^2 + |N^{\varepsilon}(\cdot,y)|^2 \right) \eta^2 dx + 2 \int_{\Omega} N^{\varepsilon}(\cdot,y) \eta \nabla N^{\varepsilon}(\cdot,y) \cdot \nabla \eta   dx +  \int_{\Omega} |\nabla \eta|^2 \eta^2 dx \notag \\
&\leq& C |\langle L (N^{\varepsilon} \eta ) , N^{\varepsilon}(\cdot,y) \eta  \rangle_{L^2(\Omega)}| \notag \\
&=& C \left|\int_{\Omega} \left( \f{1}{\mu} |\nabla N^{\varepsilon}(\cdot,y)  \eta |^2 - \varepsilon \omega^2 |N^{\varepsilon}(\cdot,y)|^2  \eta^2 \right) dx \right| \notag \\
&=& C \left| \int_{\Omega} \left( \f{1}{\mu} |\nabla N^{\varepsilon}(\cdot,y)|^2 - \epsilon \omega^2 |N^{\varepsilon}(\cdot,y)|^2 \right) \eta^2 dx + 2 \int_{\Omega} \f{1}{\mu} N^{\varepsilon}(\cdot,y) \eta \nabla N^{\varepsilon}(\cdot,y) \cdot \nabla \eta   dx +  \int_{\Omega}  \f{1}{\mu}  |\nabla \eta|^2 \eta^2 dx \right| \notag \\
&=& C \left| \int_{\Omega} \left( \f{1}{\mu} \nabla N^{\varepsilon}(\cdot,y) \cdot \nabla \left( N^{\varepsilon}(\cdot,y) \eta^2\right) - \epsilon \omega^2 |N^{\varepsilon}(\cdot,y)|^2 \eta^2 \right) dx  +  \int_{\Omega} \f{1}{\mu}  |\nabla \eta|^2  \eta^2 dx \right| \notag \\
&=& C \left| \int_{\Omega}  L N^{\varepsilon}(\cdot,y) \left( N^{\varepsilon}(\cdot,y) \eta^2\right) dx  +  \int_{\Omega} \f{1}{\mu}  |\nabla \eta|^2  \eta^2 dx \right| \notag \\
&=& C \left| \int_{\Omega}  \varphi_{\varepsilon,y} N^{\varepsilon}(\cdot,y) \eta^2 dx  +  \int_{\Omega} \f{1}{\mu} |\nabla \eta|^2  \eta^2 dx \right| \notag \\
&\leq& C  \int_{\Omega}  |\nabla \eta|^2  \eta^2 dx\,. \notag
\eqn
From this and the Cauchy-Schwarz's inequality it follows that
\beqn
 \int_{\Omega} \left( |\nabla N^{\varepsilon}(\cdot,y)|^2 + |N^{\varepsilon}(\cdot,y)|^2 \right) \eta^2 dx
\leq C  \int_{\Omega} |\nabla \eta|^2  \eta^2 dx + \f{1}{2} \int_{\Omega} |\nabla N^{\varepsilon}(\cdot,y)|^2 \eta^2 + 4 \int_{\Omega} |N^{\varepsilon}(\cdot,y)|^2 |\nabla \eta|^2 |dx \, , \notag
\eqn
which implies
\beqn
|| \nabla N^{\varepsilon}(\cdot,y)||_{L^2(\Omega \backslash B_r(y))}^2 \leq C \left( \int_{\Omega} |\nabla \eta|^2  \eta^2 + |N^{\varepsilon}(\cdot,y)|^2 |\nabla \eta|^2 dx \right) \, .
\eqn
Now we have from (\ref{usefulestimate}) and (\ref{testfunction}) that for $\varepsilon < \f{r}{6}$,
\beqn
|| \nabla N^{\varepsilon}(\cdot,y)||_{L^2(\Omega \backslash B_r(y))}^2 &\leq& C \left( \int_{B_r(y) \backslash B_{\f{r}{2}}(y)} |\nabla \eta|^2  \eta^2 + |N^{\varepsilon}(\cdot,y)|^2 |\nabla \eta|^2 |dx \right) \,  \notag \\
&\leq& C r^{-2} \left( \int_{B_r(y) \backslash B_{\f{r}{2}}(y)} 1  + |x-y|^{2(2-d)} dx \right) \,  \notag \\
&\leq& C r^{-2} \left( \int_{\f{r}{2}}^{r} \left(1  + |t|^{2(2-d)}\right) t^{d-1} dt \right) \,  \notag \\
&\leq& C r^{2-d} .
\label{ineq11}
\eqn
Combining (\ref{ineq9}) and (\ref{ineq11}), we have
\beqn
     || \nabla N^{\varepsilon}(\cdot,y) ||_{L^2(\Omega \backslash B_r(y))} \leq C r^{\f{2-d}{2}} \,
     \quad \forall \,r \in \left(0 ,\f{d_y}{2}\right) , \, \,\varepsilon > 0 \, .
     \label{combineineq1}
\eqn
Moreover it follows directly from (\ref{ineq9}) that for all $\epsilon < \f{r}{6}$,
\beqn
     || N^{\varepsilon}(\cdot,y) ||_{L^\f{2d}{d-2} (\Omega \backslash B_r(y))} \leq C r^{\f{2-d}{2}},
     \label{ineq12}
\eqn
while for $\epsilon \geq \f{r}{6}$,
\beqn
     || N^{\varepsilon}(\cdot,y) ||_{L^\f{2d}{d-2} (\Omega \backslash B_r(y))} \leq C || N^{\varepsilon}(\cdot,y) ||_{H^1(\Omega)} \leq C r^{\f{2-d}{2}}.
     \label{ineq13}
\eqn
Now the combination (\ref{ineq12}) with (\ref{ineq13}) yields
\beqn
     || N^{\varepsilon}(\cdot,y) ||_{L^\f{2d}{d-2}(\Omega \backslash B_r(y))} \leq C r^{\f{2-d}{2}} \,,   \quad \forall\, r \in \left(0 ,\f{d_y}{2}\right) , \, \,\varepsilon > 0 \, .
     \label{combineineq2}
\eqn
On the other hand, the following estimate comes from (\ref{combineineq1}) and (\ref{combineineq2})
for all $r \in \left(0 ,d_y\right)$ that
\beqn
     || N^{\varepsilon}(\cdot,y) ||_{L^\f{2d}{d-2}(\Omega \backslash B_r(y))} + || \nabla N^{\varepsilon}(\cdot,y) ||_{L^2(\Omega \backslash B_r(y))} \leq C r^{\f{2-d}{2}} \,,  \quad \forall \varepsilon > 0 \, .
     \label{totalcombineineq}
\eqn
With this estimate (\ref{totalcombineineq}), we can readily derive the following estimate for
$r \in \left(0 ,d_y\right)$ by following the same argument as in \cite{singular}:
\beqn
     || N^{\varepsilon}(\cdot,y) ||_{L^p(B_r(y))} \leq C r^{2 - d + \f{d}{p}} \,,  \quad \forall \varepsilon > 0 \,,  \quad \forall p \in \big[1 , \f{d}{d-2}\big) \, ,  \\
     \label{totalcombineineq2}
     || \nabla N^{\varepsilon}(\cdot,y) ||_{L^p(B_r(y))} \leq C r^{1 - d + \f{d}{p}} \,, \quad \forall \varepsilon > 0 \,,  \quad \forall p \in \big[1 , \f{d}{d-1}\big) \, .
     \label{totalcombineineq3}
\eqn

Now the same argument as  in \cite{singular} will ensure the existence of a sequence
$\{\varepsilon_n\}_{n=1}^\infty$ going to zero and a function $N(\cdot,y)$ such that $N^{\varepsilon_n}(\cdot,y)$ converges to $N(\cdot,y)$ weakly in $W^{1,p}(B_r(y))$ for $1 < p < \f{d}{d-1}$ and weakly in $H^1(\Omega \backslash B_r(y))$ for all $r \in \left(0 , d_y \right)$. It is then routine (see \cite{singular}) to get an estimate of $N(\cdot,y)$ from (\ref{totalcombineineq})
for all $r \in \left(0 , d_y\right)$,
\beqn
     || N(\cdot,y) ||_{L^\f{2d}{d-2}(\Omega \backslash B_r(y))} + || \nabla N(\cdot,y) ||_{L^2(\Omega \backslash B_r(y))} \leq C r^{\f{2-d}{2}} ,
     \label{totalcombineineqlimit}
\eqn
and from (\ref{totalcombineineq2}) and (\ref{totalcombineineq3}) for all $r \in \left(0 , d_y\right)$ that
\beqn
     || N(\cdot,y) ||_{L^p(B_r(y))} \leq C r^{2 - d + \f{d}{p}} \,,    \quad \forall p \in \big[1 , \f{d}{d-2}\big) \, ,  \\
     \label{totalcombineineq2limit}
     || \nabla N(\cdot,y) ||_{L^p(B_r(y))} \leq C r^{1 - d + \f{d}{p}} \,, \quad \forall p \in \big[1 , \f{d}{d-1}\big) \, .
     \label{totalcombineineq3limit}
\eqn

Our section ends with the pointwise estimate for $N(x,y)$ by using De Giorgi-Nash-Moser theorem once again. For any $x \in \Omega$ such that $0 < |x-y| < \f{d_y}{2}$, take $R : = \f{2|x-y|}{3}$. From (\ref{totalcombineineqlimit}) we have $N(\cdot,y)\in H^1(B_R(x))$ satisfying $-L N(\cdot,y)=0$ in $B_R(x)$. Then by De Giorgi-Nash-Moser theorem for the $L^\infty$ coefficient, we can deduce
the following estimate with the same technique as in (\ref{ineq8b}),
\beqn
|N(x,y)| \leq C r^{-d} || N(\cdot,y)  ||_{L^1(B_{r}(x))} \leq C r^{-d} || N(\cdot,y)  ||_{L^1(B_{3r}(y))} \leq C |x-y|^{2-d} \, .
\label{totalcombineineq4limit}
\eqn
This gives the estimate of the singularity type as $x$ approaches to $y$.

\section{Existence of functions $g^{(l)}_n$}  \label{appendixB}
In this section we wish to show the existence of functions $g^{(l)}_n$ satisfying (\ref{moment})
for all $l, n \in \mathbb{N}$ and provide their explicit expressions. From the fact that
\beqn
[J_n(kr)]^2 = \f{2}{\pi} \int_0^{\pi/2} J_{2n} (2 kr \sin \phi) d \phi \,
\label{integral}
\eqn
for all $n \in \mathbb{N}$, we substitute (\ref{integral}) into (\ref{moment}) to get, for all $l , n \in \mathbb{N}$, that
\beqn
    \f{2}{\pi} \int_0^{\pi/2} \int_0^\infty g^{(l)}_{n}(k)  J_{2n} (2 kr \sin \phi) k^2 \, dk  d \phi= r^{l-1} \, \q \forall\, r > 0 \, ,
    \label{moment2b}
\eqn
Recall the following orthogonal relationship for Hankel functions
\beqn
    \int_0^\infty  J_{2n} (kr)J_{2n}(k'r ) r \, dr = \f{\delta(k - k')}{k'} \, .
    \label{ortho}
\eqn
for all $k , k' > 0$ and $n \in \mathbb{N}$,.  Now, for $l, n \in \mathbb{N}$, consider the Hankel tranform of $r^{l-1}$ of order $2n$ at $p >0$,
\beqn
    [\mathcal{H}_{2n} (r^{l-1}) ](p)&:=& \int_0^{\infty} r^{l-1}  J_{2n}(r p ) r \, dr \, .
\eqn
By a change of variables, we have
\beqn
    [\mathcal{H}_{2n} (r^{l-1})  ](p)&=& \int_0^{\infty} r^{l-1}  J_{2n}(r p ) r \, dr \notag \\
    &=& \f{2}{\pi} \int_0^{\pi/2} \int_0^{\infty}  g^{(l)}_{n}(k) \left( \int_0^\infty  J_{2n} (2 kr \sin \phi)J_{2n}(r p ) r \, dr \right) k^2 \, dk  d \phi \notag \\
    &=&  \f{1}{\pi} \int_0^{\infty} \int_{\phi = 0}^{\phi = \pi/2}  k  g^{(l)}_{n}(k) \f{\left( \int_0^\infty  J_{2n} (2 kr \sin \phi)J_{2n}(r p ) r \, dr \right)}{\cos \phi} \, d ( 2 k \sin\phi ) \, dk \notag \\
    &=& \f{1}{\pi} \int_0^{\infty} \int_{0}^{2k} k  g^{(l)}_{n}(k) \f{\left( \int_0^\infty  J_{2n} (r l )J_{2n}(r p ) r \, dr \right)}{\sqrt{1 - (\f{l}{2 k})^2}}  \, d l \, dk .
    \label{eqil1}
\eqn
From orthogonality relation (\ref{ortho}), we get that from (\ref{eqil1}) that
\beqn
    [\mathcal{H}_{2n} (r^{l-1})  ](p)
    &=& \f{1}{\pi} \int_0^{\infty} \chi_{\{p<2k\}}(k) \f{k  g^{(l)}_{n}(k)}{ p \sqrt{1 - (\f{p}{2 k})^2}}\, dk \notag \\
    &=& \f{1}{p \pi} \int_{\f{p}{2}}^{\infty} \f{k^2  g^{(l)}_{n}(k)}{\sqrt{k^2 - (\f{p}{2})^2}}\, dk  \, .
    \label{eqil2}
\eqn
Therefore,  for $p >0$, we have
\beqn
    - 2p \, [\mathcal{H}_{2n} (r^{l-1})  ](2 p)
    &=& - \f{1}{\pi} \int_{p}^{\infty} \f{k^2  g^{(l)}_{n}(k)}{\sqrt{k^2 - p^2}}\, dk  \, .
    \label{eqil3}
\eqn
Now we recall that the Abel transform of an integrable function $f(r)$ defined on $r\in (0,\infty)$ is as follows
\beqn
    F(y) := [\mathcal{A} (f)](y) :=
    2 \int_{y}^{\infty} \f{f(r)r}{ \sqrt{r^2 - y^2} }\, dr \, , \quad y \in (0,\infty) \, ,
    \label{abel}
\eqn
whenever the above integral is well-defined. If $f(r) = O(\f{1}{r})$ as $r \rightarrow \infty$, then its inverse Abel transform is well-defined and $f$ satisfies the following
\beqn
    f(r) = [\mathcal{A}^{-1} (F)](r) :=
    - \f{1}{\pi} \int_{r}^{\infty} \f{F'(y)}{ \sqrt{y^2 - r^2} }\, dy \, , \quad r \in (0,\infty) \, .
    \label{invabel}
\eqn
Comparing (\ref{invabel}) and (\ref{eqil3}), we can see that, for all $l , n \in \mathbb{N}$, the functions
\beqn
    G^{(l)}_{n}(p) : = - 2p \, [\mathcal{H}_{2n} (r^{l-1})  ](2 p) \, , \quad p \in (0,\infty)
    \label{defGin}
\eqn
are nothing but the inverse Abel transform of a primitive function of $k^2  g^{(l)}_{n}(k)$. Therefore, applying Abel transform to both sides of the equation (\ref{eqil3}) and then differentiating with respect to the argument of the function, we get
\beqn
     \f{\p }{\p k} \left[\mathcal{A} \left(G^{(l)}_{n} \right) \right] (k)
    = k^2  g^{(l)}_{n}(k)  \, .
    \label{eqil4}
\eqn
Consequently,  we have the following explicit expression for $g^{(l)}_{n}$
\beqn
    g^{(l)}_{n}(k) =  \f{1}{k^2} \f{\p }{\p k} \left[\mathcal{A} \left(G^{(l)}_{n} \right) \right] (k) \, , \quad k \in (0,\infty),
    \label{answer}
\eqn
where $G^{(l)}_{n}$ is defined as in (\ref{defGin}). One can see by direct substitution of (\ref{answer}) back into (\ref{moment}) that the functions $g^{(l)}_{n}$ defined as (\ref{answer}) satisfy equation (\ref{moment}). Therefore,  we have shown existence of functions satisfying (\ref{moment}).

\section{Existence of functions $g^{(l)}_{nm}$}  \label{appendixC}
In this section we show the existence of functions $g^{(l)}_{nm}$ for $l,n,m \in \mathbb{Z}$ and $l\geq 0$ which satisfies
(\ref{moment2}), namely the integral equation
\beqn
    \int_0^\infty g^{(l)}_{nm}(k) J_n(k r) J_m(k r) k^2 \, dk = r^{l-1} \, \q \forall\, r > 0 \, .
    \label{generalmoment}
\eqn
For this purpose, we would like to first investigate the following integral, which will be useful in the subsequent discussion.  For $n,m \in \mathbb{N}$ and $p \in \mathbb{C}$ such that $m + n > Re(p)>0$, we consider the following integral,
\beqn
A_{nm}(p) := \int_0^\infty J_n(x)J_m(x)x^{-p} \, dx\,, \quad p \in \mathbb{C}\, , \quad m + n > Re(p)>0 \, .
\label{integralAnm}
\eqn
We observe that the function $A_{nm}: \{p \in \mathbb{C}\,: \, m + n > Re(p)>0 \}\rightarrow \mathbb{C}$ is a holomorphic function on the strip $\{p \in \mathbb{C}\,: \, a<Re(p)<b \}$ for some $a, b \in \mathbb{R}$ such that $a < b$.  This comes from the fact that for $n,m \in \mathbb{N}$ and $p \in \mathbb{C}$ such that $m + n > Re(p)>0$, the integral $A_{nm}(p)$ defined in (\ref{integralAnm}) can be expressed in the following form,
\beqn
A_{nm}(p)=\int_0^\infty J_m(x)J_n(x)x^{-p} \, dx
=\frac{2^{- p} \, \Gamma(p) \, \Gamma\left(\f{1 + m + n - p}{2}\right)}{
   \Gamma\left(\f{1 + m - n + p}{2}\right) \Gamma\left(\f{1 - m + n + p}{2}\right)\Gamma\left(\f{1 + m + n + p}{2}\right)\,.
   }
\label{integral_explicit}
\eqn
Now given $a, b \in \mathbb{R}$ and $s \in \mathbb{C}$ such that $a <Re(s)<b$, we recall the definition of the Mellin tranform of an integrable function $f(r)$ defined for $r\in (0,\infty)$:
\beqn
[\mathcal{M}(f)](y) := \int_0^\infty r^{s-1} f(r) dr\, , \quad  a <Re(s)<b
\eqn
whenever the above integral is well-defined. With $a,b \in \mathbb{R}$, we write the function $\zeta_{a,b}$ as
\beqn
\zeta_{a,b}(x) = x^{-a} \, \text{ for } 0 < x \leq 1\, , \quad \text{ and } x^{-b} \, \text{ for } 1 < x < \infty .
\eqn
We define the linear space $\mu_{a,b}(0,\infty)$ as the space of all infinitely smooth compactly supported complex valued functions $\phi \in C_c^\infty(0,\infty)$ for which
\beqn
   || \phi ||_{k,\zeta_{a,b},K} := \sup_{K} |\zeta_{a,b}(x) x^{k+1} D_x^k \phi (x)|
\eqn
is finite for all $k \in \mathbb{N}$ and for any compact set $K \Subset (0,\infty)$.  Consider an increasing sequence of compact sets $\{K_n \Subset (0,\infty)\}_{n\in \mathbb{N}}$ such that $\bigcup_{n\in \mathbb{N}} K_n = (0,\infty)$, the countable norms $|| \cdot ||_{k,\zeta_{a,b},K_n} \, , k,n \in \mathbb{N}$ gives a topology on $\mu_{a,b}(0,\infty)$ such that $\mu_{a,b}(0,\infty)$ becomes a complete locally convex space.  We define the dual of $\mu_{a,b}(0,\infty)$, $\mu_{a,b}'(0,\infty)$, and equip it with the weak topology. With these definitions at hand, the Mellin transform can be naturally extended to the space $\mu_{a,b}'(0,\infty)$, see \cite{alomari,Zemanian} for more details.  We denote the generalized Mellin transform also as $\mathcal{M}$.

Now from (\ref{generalmoment}) and (\ref{integralAnm}), we have
for all $l,n,m \in \mathbb{Z}$ with $l\geq 0$ and $p\in \mathbb{C}$ such that $Re(p)>l$,
\beqn
\left[\mathcal{M}(1)\right] (l-p)&=& \int_0^\infty \int_0^\infty g^{(l)}_{nm}(k) J_n(k r) J_m(k r) r^{-p} k^2 \, dk  \,dr \notag \\
&=& \int_0^\infty g^{(l)}_{nm} (k) \left( \int_{r=0}^\infty  J_n(k r) J_m(k r) (kr)^{-p} \,d(kr) \right) k^{p + 1} \, dk   \notag \\
&=& \int_0^\infty g^{(l)}_{nm} (k) \left( \int_{0}^\infty  J_n(r) J_m(r) r^{-p} \,d r \right) k^{p + 1} \, dk   \notag \\
&=& A_{nm}(p) \left[ \mathcal{M} ( g^{(l)}_{nm}) \right] (p+2)\, ,
\eqn
where $A_{nm}(p)$ is known explicitly as (\ref{integral_explicit}). Therefore we get, for all $l,n,m \in \mathbb{Z}$ with $l\geq 0$ and $p\in \mathbb{C}$ such that $Re(p)>l$,
\beqn
\left[ \mathcal{M} ( g^{(l)}_{nm}) \right] (p+2) = \f{\left[\mathcal{M}(1)\right] (l-p)}{A_{nm}(p)}\,,
\eqn
then the existence of $g^{(l)}_{nm}$ is ensured by the Mellin inverse transform.

\end{document}